\newcommand{\rrvert}{\vert}
\newcommand{\llvert}{\vert}
\def\mathds{\mathbh}
\renewcommand{\tilde}{\widetilde}
\newcommand{\R}{\mathbb{R}}
\newcommand{\N}{\mathbb{N}}
\newcommand{\E}{\mathbb{E}}
\newtheorem{theorem}{Theorem}
\newtheorem{corollary}[theorem]{Corollary}
\newtheorem{proposition}[theorem]{Proposition}
\newtheorem{lemma}[theorem]{Lemma}
\newtheorem{hyp}{Hypothesis}
\newcommand{\eqref}[1]{(\ref{#1})}
\begin{document}
\begin{frontmatter}

\title{Propagation of chaos for interacting particles subject to
environmental noise}
\runtitle{Propagation of chaos}

\begin{aug}
\author[A]{\fnms{Michele}~\snm{Coghi}\ead[label=e1]{michele.coghi@sns.it}} 
\and
\author[B]{\fnms{Franco}~\snm{Flandoli}\corref{}\ead[label=e2]{flandoli@dma.unipi.it}}
\runauthor{M. Coghi and F. Flandoli}
\affiliation{Scuola Normale Superiore and University of Pisa}
\address[A]{Scuola Normale Superiore\\
Piazza dei Cavalieri 7\\
56126 Pisa\\
Italy\\
\printead{e1}}
\address[B]{Dipartimento di Matematica\\
Universit\`{a} di Pisa\\
Piazza Bruno Pontecorvo 5\\
56127 Pisa\\
Italy\\
\printead{e2}}
\end{aug}

\received{\smonth{3} \syear{2014}}
\revised{\smonth{3} \syear{2015}}

%
\begin{abstract}
A system of interacting particles described by stochastic differential
equations is considered. As oppopsed  to the usual model, where the noise
perturbations acting on different particles are independent, here the
particles are subject to the same space-dependent noise, similar to the
(noninteracting) particles of the theory of diffusion of passive
scalars. We
prove a result of propagation of chaos and show that the limit PDE is
stochastic and of inviscid type, as opposed to the case when independent noises
drive the different particles.
\end{abstract}

%
\begin{keyword}[class=AMS]
\kwd{60K35}
\kwd{82C22}
\kwd{60K37}
\kwd{60H15}
\end{keyword}
\begin{keyword}
\kwd{Interacting particle system}
\kwd{propagation of chaos}
\kwd{mean field limit}
\kwd{Kraichnan noise}
\kwd{Wasserstain metric}
\end{keyword}
\end{frontmatter}

\section{Introduction}\label{Introduction}

We prove a propagation of chaos result for the interacting particle
system in
$\mathbb{R}^{d}$ described by the equations
%
\begin{eqnarray}\label{SDE 1}
dX_{t}^{i,N}  =\frac{1}{N}\sum
_{j=1}^{N}K \bigl( X_{t}^{i,N}-X_{t}%
^{j,N}
\bigr) \,dt+\sum_{k=1}^{\infty}
\sigma_{k} \bigl( X_{t}^{i,N} \bigr) \circ
\,dB_{t}^{k},
\nonumber
\\[-8pt]
\\[-8pt]
\eqntext{i  =1,\ldots,N,}
\end{eqnarray}
where $K,\sigma_{k}:\mathbb{R}^{d}\rightarrow\mathbb{R}^{d}$, $k\in
\mathbb{N}%
$, are uniformly Lipschitz continuous and $ ( B^{k} ) _{k\in
\mathbb{N}}$ are
independent real-valued Brownian motions on a filtered probability space
$ ( \Omega,\mathcal{F},\mathcal{F}_{t},P ) $; the additional
assumption Hypothesis \ref{noise} will be imposed on $\sigma_{k}$'s, in
Section~\ref{Settings}. In \eqref{SDE 1}, we chose Stratonovich
stochastic integration since the final result, in Stratonovich form and
under Hypothesis \ref{noise}, is more clear and elegant. However, at
the price of additional terms, the
results hold for the It\^{o} case and under more general assumptions
(e.g., time-dependent $\sigma_k$); see Section~\ref{extensions
and variants}.

The classical propagation
of chaos framework considered in the literature deals with the system
%
\begin{eqnarray}\label{classical_system}
dX_{t}^{i,N}  =\frac{1}{N}\sum
_{j=1}^{N}K \bigl( X_{t}^{i,N}-X_{t}%
^{j,N}
\bigr) \,dt+dW_{t}^{i},
\nonumber
\\[-8pt]
\\[-8pt]
\eqntext{i  =1,\ldots,N,}
\end{eqnarray}
where $ ( W^{i} ) _{i\in\mathbb{N}}$ are independent
$\mathbb
{R}%
^{d}$-valued Brownian motions; see, for instance,~\cite{Sz}.
Unlike this
classical case, in (\ref{SDE 1}) \textit{the same} space-dependent
delta-correlated-in-time noise $v ( t,x ) $, formally given
by
\[
v ( t,x ) =\sum_{k=1}^{\infty}
\sigma_{k} ( x ) \frac{dB_{t}^{k}}{dt}
\]
acts on each particle. This type of space correlated noise was
introduced in physics to describe
small scale motion in a turbulent fluid,
as in the famous Kraichnan
model
of the sixties. The physical intuition in this case, for equation
\eqref
{SDE 1}, is that the
particles are embedded in a turbulent fluid with velocity $v(t, x)$.
Each particle is subject to the transport effect of the fluid and to
the motion caused by the interaction with the other particles. Among
other examples, we may also think of the case of smoothed point
vortices (think of relatively large scale vortex structures in ocean or
atmosphere), subject to the transport effect of each other (the
interaction) and of a background, small scale, turbulent perturbation.
Instead of considering all fluid scales as a whole, described by classical
equations of fluid dynamics, one could try, phenomenologically, to
separate the
large scale vortex structures from the small scale more irregular fluctuations
and consider the small scales modeled independently a priori, and the vortices
just influencing each other and influenced by the small scales  without
feedback on small scales. In such an example, to fit with the
assumptions of model \eqref{SDE 1}, we have to assume that the
interaction between vortices is described by a smoothed Biot--Savart
kernel  since the singularity of the true Biot--Savart kernel
introduces additional difficulties which cannot be handled with the
techniques of this paper.
On the other hand,
the
more classical model \eqref{classical_system} is more suitable when
each particle has its own internal origin of randomness (like certain
living organisms) or the external sources of randomness can be
considered to be totally uncorrelated at the scale of the particles, like
for very light macroscopic particles interacting with the molecules of
a gas.

If the covariance of the noise is suitably
concentrated (see Hypothesis \ref{noise} in Section~\ref{Settings}),
the random field $v ( t,x ) $ is poorly
space-correlated, except at very short distances, and thus particles which
occupy sufficiently distant positions are subject to almost independent noise,
a fact that makes the two systems (\ref{SDE 1}) and (\ref
{classical_system}) not so
different when the collection of particles is sufficiently sparse.

However, in the limit when $N\rightarrow\infty$, the behavior is completely
different. Let $ ( X^{i} ) _{i\in\mathbb{N}}$ be a
sequence of
i.i.d. random vectors in $\mathbb{R}^{d}$ with law $\mu_{0}$; assume
that the
families $ ( B_{\cdot}^{k} ) _{k\in\mathbb{N}}$ [$ (
W^{i} ) _{i\in\mathbb{N}}$ for equation (\ref
{classical_system})] and
$ ( X^{i} ) _{i\in\mathbb{N}}$ are independent and take
$X_{0}^{i,N}=X^{i}$ as initial conditions for system (\ref{SDE 1}). Denote
by $S_{t}^{N}$ the empirical measure defined as
%
\begin{equation}
\label{empirical} S_{t}^{N}=\frac{1}{N}\sum
_{i=1}^{N}\delta_{X_{t}^{i,N}}.
\end{equation}
The random probability measure $S_{0}^{N}$ converges weakly to $\mu
_{0}$ in
probability. In both cases of equations (\ref{SDE 1}) and (\ref
{classical_system}), one can
prove [cf. \cite{Sz} for case (\ref{classical_system}) and the present
paper for case
(\ref{SDE 1})] that $S_{t}^{N}$ converges weakly, in probability, to a
probability measure $\mu_{t}$. 
However, in case (\ref{classical_system}), $\mu_{t}$ is deterministic,
the weak
convergence of $S_{t}^{N}$ to $\mu_{t}$ is understood in probability with
respect to both initial conditions and noise, and $\mu_{t}$ is a
distributional solution of the nonlinear equation
\[
\frac{\partial\mu_{t}}{\partial t}+\operatorname{div} ( b_{\mu
_{t}}%
\mu_{t} ) =\frac{1}2\Delta\mu_{t},
\]
where, for a generic probability measure $\nu$, the vector field
$b_{\nu
}:\mathbb{R}^{d}\rightarrow\mathbb{R}^{d}$ is defined as
\[
b_{\nu} ( x ) =\int_{\mathbb{R}^{d}}K ( x-y ) \nu ( dy ).
\]
On the contrary, in case (\ref{SDE 1}), $\mu_{t}$ is a random
probability measure and, under the particular assumptions of Section~\ref{assumptions}, it satisfies in the distributional sense the
stochastic PDE
%
\begin{equation}
\label{SPDE Ito form} d\mu_{t}+\operatorname{div} ( b_{\mu_{t}}
\mu_{t} ) \,dt+\sum_{k=1}^{\infty}
\operatorname{div} ( \sigma_{k} \mu_{t} ) \circ
dB_{t}^{k}=0%
\end{equation}
and the weak convergence of $S_{t}^{N}$ to $\mu_{t}$ is understood in
probability only with respect to the initial conditions. In Section~\ref{assumptions}, we give the It\^{o} form of this stochastic partial
differential equation and in Section~\ref{extensions and variants} we
show the modifications when we start from \eqref{SDE 1} in
It\^{o} form or when the assumptions on $\sigma_k$ are more general
than those of Section~\ref{assumptions}.

The main result of this paper is the following theorem, by
which one can relate the convergence of the empirical measure of the
system with the convergence of the empirical measure of the initial conditions.

\begin{theorem}\label{teorema_convergenza}
Let $T>0$ and assume Hypothesis \ref{noise}, given in Section~\ref{Settings}, on the noise. There exists a constant $\tilde C_T>0$ such that
\[
\E\bigl[W_1\bigl(\mu,S_t^N\bigr)\bigr]\leq
\tilde{C}_T\E\bigl[ W_1\bigl(\mu_0,S^N_0
\bigr)\bigr],
\]
where $W_1$ is the Wasserstein distance (see Definition~\ref
{Wasserstein_distance}).
\end{theorem}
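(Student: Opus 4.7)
The strategy is a coupling argument against a nonlinear McKean--Vlasov system driven by the same Brownian motions. Introduce $N$ auxiliary processes $(\bar{X}^i)_{i=1}^N$ solving
\[
d\bar{X}_t^i = b_{\mu_t}(\bar{X}_t^i)\, dt + \sum_{k} \sigma_k(\bar{X}_t^i) \circ dB_t^k, \qquad \bar{X}_0^i = X^i,
\]
driven by the \emph{same} noise and starting from the \emph{same} initial conditions as in \eqref{SDE 1}, with $\mu_t$ interpreted as the conditional law of $\bar{X}_t^1$ given $\sigma(B^k:k\in\N)$ (which one identifies with the SPDE solution of \eqref{SPDE Ito form}). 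Conditionally on this noise the $\bar{X}_t^i$ are i.i.d.\ with law $\mu_t$, and one can write $\bar{X}_t^i = \Phi_t(X^i)$ for a random flow $\Phi_t$ that is Lipschitz with a deterministic constant $e^{L T}$, obtained by the standard SDE estimate under Lipschitz continuity of $K$, $\sigma_k$ and Hypothesis \ref{noise}.

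\emph{Triangle split.} Setting $\bar{S}_t^N := \frac{1}{N}\sum_i \delta_{\bar{X}_t^i}$,
\[
W_1(\mu_t, S_t^N) \leq W_1(\mu_t, \bar{S}_t^N) + W_1(\bar{S}_t^N, S_t^N).
\]
Since $\mu_t = \Phi_t \# \mu_0$ and $\bar{S}_t^N = \Phi_t \# S_0^N$ almost surely, the Lipschitz property of the flow controls the first term pathwise by $e^{L T} W_1(\mu_0, S_0^N)$.

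\emph{Coupling bound for the second term.} Use $W_1(\bar{S}_t^N, S_t^N) \leq \frac{1}{N}\sum_i |\xi_t^i|$ with $\xi_t^i := X_t^{i,N} - \bar{X}_t^i$ and $\xi_0^i = 0$. Apply It\^o's formula to $|\xi_t^i|^2$ after Stratonovich-to-It\^o conversion and decompose the drift difference as
\[
b_{S_s^N}(X_s^{i,N}) - b_{\mu_s}(\bar{X}_s^i) = \big[b_{S_s^N}(X_s^{i,N}) - b_{\bar{S}_s^N}(\bar{X}_s^i)\big] + \big[b_{\bar{S}_s^N}(\bar{X}_s^i) - b_{\mu_s}(\bar{X}_s^i)\big].
\]
The first bracket is controlled by Lipschitz continuity of $K$ through $L\bigl(|\xi_s^i| + \tfrac{1}{N}\sum_j |\xi_s^j|\bigr)$; the second by $L\, W_1(\bar{S}_s^N, \mu_s) \leq L e^{L s} W_1(\mu_0, S_0^N)$, using the previous step. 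Averaging over $i$, exploiting exchangeability, and invoking Gronwall yield
\[
\E\!\left[\frac{1}{N}\sum_i |\xi_t^i|^2\right] \leq C_T\, \E\!\left[W_1(\mu_0, S_0^N)^2\right].
\]
Combining with the first-term bound and Cauchy--Schwarz (or an $L^1$-parallel argument based on Burkholder--Davis--Gundy applied to a regularisation of $|\xi|$) delivers the theorem.

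\emph{Main obstacle.} The crucial technical step is the Stratonovich-to-It\^o conversion in the SDE for $\xi_t^i$: one must show that the second-order correction $\tfrac12\sum_k(\sigma_k\cdot\nabla)\sigma_k$ together with the quadratic-variation contribution $\sum_k|\sigma_k(X_s^{i,N}) - \sigma_k(\bar{X}_s^i)|^2$ combine, under Hypothesis \ref{noise}, into a Lipschitz-type perturbation that Gronwall can absorb---this is precisely where the common-noise covariance structure enters the analysis. A secondary subtlety is the clean transition between the $L^1$ and $L^2$ moments of the Wasserstein distance at the final step.
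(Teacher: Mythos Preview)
Your coupling picture is reasonable and in fact close in spirit to the paper's, but there are two concrete gaps.

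\textbf{The flow is not pathwise Lipschitz with a deterministic constant.} The standard SDE estimate under Lipschitz coefficients gives only the moment bound
\[
\E\bigl[\,|\Phi_t(x)-\Phi_t(x')|^p\,\big|\,\mathcal{F}_0\bigr]\le C_{p,T}\,|x-x'|^p,
\]
not a pathwise Lipschitz constant $e^{LT}$; so the assertion that the first term is controlled ``pathwise by $e^{LT}W_1(\mu_0,S_0^N)$'' is not justified. The paper repairs this by selecting an $\mathcal{F}_0$-measurable optimal coupling $m\in\Gamma(\mu_0,S_0^N)$, pushing it forward by $(\Phi_t,\Phi_t)$, and exchanging conditional expectation with integration against the random measure $m$ (a nontrivial conditional Fubini step proved separately). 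After this correction the first term gives $C_{1,T}\,\E[W_1(\mu_0,S_0^N)]$, which is what you want---but the argument must go through expectations conditioned on $\mathcal{F}_0$, not pathwise.

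\textbf{The $L^1/L^2$ transition does not close.} This is the real obstruction, and you are right to flag it. Your Gronwall argument on $|\xi^i_t|^2$ produces a bound by $\E[W_1(\mu_0,S_0^N)^2]^{1/2}$, which does not dominate $\E[W_1(\mu_0,S_0^N)]$ in the direction you need. The suggested ``$L^1$-parallel argument based on BDG applied to a regularisation of $|\xi|$'' runs into a specific wall: BDG with exponent $1$ leaves you with $\E\bigl[(\int_0^t|\xi^i_s|^2\,ds)^{1/2}\bigr]$, and there is no way to extract $\int_0^t\E[|\xi^i_s|]\,ds$ from this to feed Gronwall. The paper explicitly discusses this failure (in a remark comparing the $W_1$ and $W_2$ cases) and resolves it by a different mechanism: it regards both $\mu$ and $S^N$ as fixed points of contraction operators $\Phi_{\mu_0}$, $\Phi_{S_0^N}$ on the metric $d_{\mathcal{S}}(\cdot,\cdot)=\E[\sup_t W_1(\cdot_t,\cdot_t)]$, proves a short-time contraction $d_{\mathcal{S}}(\Phi_{\nu_0}\mu,\Phi_{\nu_0}\nu)\le\gamma_T\,d_{\mathcal{S}}(\mu,\nu)$ with $\gamma_T<1$ via a time-splitting trick (the $\sup_t$ in the metric is what lets the stochastic-integral term be absorbed), and closes with the triangle inequality
\[
d_{\mathcal{S}}(\mu,S^N)\le d_{\mathcal{S}}(\Phi_{\mu_0}\mu,\Phi_{S_0^N}\mu)+d_{\mathcal{S}}(\Phi_{S_0^N}\mu,\Phi_{S_0^N}S^N)\le C_{1,T}\,\E[W_1(\mu_0,S_0^N)]+\gamma_T\,d_{\mathcal{S}}(\mu,S^N),
\]
then iterates over subintervals. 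Your auxiliary measure $\bar S^N_t$ is exactly $(\Phi_{S_0^N}\mu)_t$, so the decomposition matches; what your sketch is missing is the device---short-time contraction in the $\sup_t$-metric rather than pointwise-in-$t$ Gronwall---that actually makes the $L^1$ estimate close.
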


In Section~\ref{sezione_convergenza} we give a more precise statement
of Theorem~\ref{teorema_convergenza}, as well as a short discussion on
recent results on quantitative estimates on the rate of convergence of
$S_0^N$ to $\mu_0$ which can be applied in our model.

From Theorem~\ref{teorema_convergenza} we deduce a \textit
{conditional} propagation of
chaos result: Conditional to $ ( B^{k} ) _{k\in\mathbb{N}}$,
the particles tend to be
independent as $N\rightarrow\infty$. One can find other works in
literature dealing with conditional propagation of chaos, but referring
to different objects and in different contexts. In \cite{CCLLV} and
\cite{Hauray_Mischler_2014}, the authors treat propagation of chaos
conditionally to produce measures on the Kac's sphere and in the latter
are given quantitative estimates. In other works, the conditionality is
given with respect to the $\sigma$-field of the permutable events; see,
for example, \cite{Zheng} and \cite{Dermoune}.

The precise statement about conditional propagation of chaos in this
work is given by the following theorem.

\begin{theorem}\label{teorema_2}
\label{main theorem} Let
$\mathcal{F}_{t}^{B}$ be the filtration associated to $ (
B^{k} )
_{k\in\mathbb{N}}$. We suppose that the noise satisfies Hypothesis
\ref
{noise} in both equations \eqref{SDE 1} and \eqref{SPDE Ito form}.
There exists a random measure-valued solution $\mu_{t}$ of
equation (\ref{SPDE Ito form}) such that
\[
\lim_{N\rightarrow\infty}E \bigl[ \bigl\llvert \bigl\langle
S_{t}^{N} 
,\phi \bigr\rangle- \langle
\mu_{t},\phi \rangle\bigr\rrvert \bigr] =0
\]
for all $\phi\in C_{b} ( \mathbb{R}^{d} ) $.

Moreover, given $r\in\mathbb{N}$ and $\phi_{1},\ldots,\phi_{r}\in
C_{b} (
\mathbb{R}^{d} ) $, we have
\[
\lim_{N\rightarrow\infty}E \bigl[ \phi_{1} \bigl(
X_{t}^{1,N} \bigr) \cdots\phi_{r} \bigl(
X_{t}^{r,N} \bigr) |\mathcal {F}_{t}^{B}
\bigr] =%
{ \prod_{i=1}^{r}}
 \langle\mu_{t},\phi_{i} \rangle
\]
in $L^{1} ( \Omega ) $.
\end{theorem}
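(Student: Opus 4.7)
The plan is to combine the quantitative Wasserstein bound of Theorem \ref{teorema_convergenza} with a standard exchangeability expansion to deduce the conditional factorization. First I would construct $\mu_t$ independently of the particle system, for instance as the conditional law (given the noise) of the solution of the nonlinear McKean--Vlasov SDE
\[
d\bar X_t = b_{\mu_t}(\bar X_t)\, dt + \sum_{k=1}^{\infty} \sigma_k(\bar X_t) \circ dB_t^k, \qquad \bar X_0 \sim \mu_0,
\]
with the self-consistency requirement $\mu_t = \mathrm{Law}(\bar X_t \mid \mathcal{F}_t^B)$; alternatively via a direct well-posedness theory for \eqref{SPDE Ito form} with the deterministic initial datum $\mu_0$. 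Either route makes $\mu_t$ manifestly $\mathcal{F}_t^B$-measurable. Since $X^1,X^2,\ldots$ are i.i.d.\ with law $\mu_0$ and independent of the noise, classical empirical-measure asymptotics give $E[W_1(S_0^N, \mu_0)] \to 0$, and Theorem \ref{teorema_convergenza} propagates this to $E[W_1(S_t^N, \mu_t)] \to 0$. For $\phi$ bounded and Lipschitz, Kantorovich--Rubinstein duality yields $|\langle S_t^N - \mu_t, \phi\rangle| \leq \mathrm{Lip}(\phi)\, W_1(S_t^N, \mu_t)$ and hence $L^1$-convergence; general $\phi \in C_b(\mathbb{R}^d)$ are then handled by truncation plus mollification using the tightness already supplied by the $W_1$-bound, which establishes the first claim.

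For the second claim, the key observation is that, conditionally on $\mathcal{F}_t^B$, the vector $(X_t^{1,N},\ldots,X_t^{N,N})$ is exchangeable, since the initial conditions are i.i.d.\ and independent of the noise and the drift in \eqref{SDE 1} is symmetric in the particle labels. Expanding
\[
\prod_{j=1}^{r} \langle S_t^N, \phi_j\rangle = \frac{1}{N^r} \sum_{i_1,\ldots,i_r=1}^{N} \prod_{j=1}^{r} \phi_j(X_t^{i_j,N}),
\]
I would split the sum into tuples with pairwise distinct indices (of which there are $N(N-1)\cdots(N-r+1)$) and those with at least one repetition, and apply conditional exchangeability on the distinct part to obtain
\[
E\Big[\prod_{j} \langle S_t^N, \phi_j\rangle \,\Big|\, \mathcal{F}_t^B\Big] = \frac{N(N-1)\cdots(N-r+1)}{N^r}\, E\Big[\prod_{j} \phi_j(X_t^{j,N}) \,\Big|\, \mathcal{F}_t^B\Big] + R_N,
\]
with $|R_N| \leq C(r)\prod_{j} \|\phi_j\|_\infty/N$ almost surely; the two conditional expectations therefore agree in the $N \to \infty$ limit, the difference going to zero in $L^\infty(\Omega)$.

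To close the argument, the telescoping identity $\prod_j a_j - \prod_j b_j = \sum_k (\prod_{j<k} b_j)(a_k - b_k)(\prod_{j>k} a_j)$, combined with $\|\phi_j\|_\infty < \infty$ and the first step, upgrades to $\prod_j \langle S_t^N, \phi_j\rangle \to \prod_j \langle \mu_t, \phi_j\rangle$ in $L^1(\Omega)$. Since conditional expectation is an $L^1$-contraction and $\prod_j \langle \mu_t, \phi_j\rangle$ is $\mathcal{F}_t^B$-measurable, one deduces $E[\prod_j \langle S_t^N, \phi_j\rangle \mid \mathcal{F}_t^B] \to \prod_j \langle \mu_t, \phi_j\rangle$ in $L^1$, which combined with the exchangeability expansion concludes the proof. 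The genuine obstacle is neither the expansion nor the Wasserstein-to-weak translation, both of which are routine, but producing $\mu_t$ in a form that is simultaneously $\mathcal{F}_t^B$-measurable and recognisable as a distributional solution of \eqref{SPDE Ito form}. Under Hypothesis \ref{noise} and Lipschitz $K,\sigma_k$, the McKean--Vlasov route is the cleanest, yielding the solution via Picard iteration on the Wasserstein space of conditional laws for each fixed noise realisation, with \eqref{SPDE Ito form} then obtained by applying the It\^o--Wentzell formula to $\langle \mu_t, \phi\rangle$ for smooth test $\phi$.
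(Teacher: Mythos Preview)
Your proposal is correct and follows essentially the same route as the paper: the first claim is obtained from Theorem~\ref{teorema_convergenza} together with the fact that $W_1$-convergence implies weak convergence, and the second from conditional exchangeability of $(X_t^{1,N},\ldots,X_t^{N,N})$ given $\mathcal{F}_t^B$ combined with the distinct/repeated-index expansion of $\prod_j\langle S_t^N,\phi_j\rangle$ and a telescoping estimate. The only minor differences are that the paper constructs $\mu_t$ via the contraction on $(\mathcal{S},d_{\mathcal{S}})$ developed in Section~\ref{Well_SPDE} rather than through the McKean--Vlasov SDE you suggest, and that it isolates the conditional exchangeability as a separate lemma (proved from uniqueness in law for the particle system) rather than simply asserting it.
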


In particular, for every $r\in\mathbb{N}$ and $\phi\in C_{b} (
\mathbb{R}^{d} ) $, $\lim_{N\rightarrow\infty}E [ \phi
 (
X_{t}^{r,N} ) |\mathcal{F}_{t}^{B} ] = \langle\mu
_{t}%
,\phi \rangle$, namely the conditional law of $X_{t}^{r,N}$ given
$\mathcal{F}_{t}^{B}$ converges weakly to $\mu_{t}$. We can also prove
the following.

\begin{theorem}\label{ConvergenzaL1}
Given $\mu_{t}$ as in Theorem~\ref{main theorem} and $r\in\mathbb
{N} $,
if $X_{t}$ is the unique strong solution of the SDE
\[
dX_{t}=b_{\mu_{t}} ( X_{t} ) \,dt+\sum
_{k=1}^{\infty}\sigma _{k} ( X_{t} )
\,dB_{t}^{k},\qquad X_{0}=X_{0}^{r},
\]
where the noise satisfies Hypotesis \ref{noise}, then
\[
\lim_{N\rightarrow\infty}E \bigl[ \bigl\llvert X_{t}^{r,N}-X_{t}
\bigr\rrvert \bigr] =0.
\]
Moreover, $\mu_t$ is a version of the conditional law of $X_t$ with
respect to $\mathcal{F}^B_t$, namely
\[
\langle\mu_t,\phi\rangle\in\E \bigl[\phi(X_t)|\mathcal
{F}_t^B \bigr]
\]
for every $\phi\in C_b^\infty(\R^d)$.
\end{theorem}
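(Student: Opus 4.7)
The plan is a synchronous coupling. Let $\bar X_t^i$ denote, for each $i$, the unique strong solution of
\[
d\bar X_t^i = b_{\mu_t}(\bar X_t^i)\,dt + \sum_{k=1}^\infty \sigma_k(\bar X_t^i)\,dB_t^k, \qquad \bar X_0^i = X_0^i,
\]
driven by the same Brownian motions $(B^k)_{k\in\mathbb{N}}$ that appear in \eqref{SDE 1}; in particular $X_t = \bar X_t^r$ is the solution from the statement, defined on the same probability space as the particles. Pathwise uniqueness is routine because $K$ and the $\sigma_k$ are uniformly Lipschitz and $\mu_t$ is $\mathcal{F}_t^B$-measurable, so the random coefficient $b_{\mu_t}$ inherits a uniform Lipschitz constant in $x$.

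To bound $\mathbb{E}|X_t^{r,N}-X_t|$, first rewrite \eqref{SDE 1} in It\^o form (the Stratonovich-to-It\^o correction is well behaved under Hypothesis \ref{noise}) and apply It\^o's formula to $|Y_t|^2 := |X_t^{r,N}-X_t|^2$, then take expectations so the martingale part vanishes. The key algebraic step is the decomposition
\[
\frac{1}{N}\sum_{j=1}^{N} K\bigl(X_t^{r,N}-X_t^{j,N}\bigr) - b_{\mu_t}(X_t) = \bigl\langle S_t^N,\, K(X_t^{r,N}-\cdot) - K(X_t-\cdot)\bigr\rangle + \bigl\langle S_t^N - \mu_t,\, K(X_t-\cdot)\bigr\rangle.
\]
The first summand is pointwise bounded by $\mathrm{Lip}(K)\,|Y_t|$, while the second is bounded by $\mathrm{Lip}(K)\,W_1(S_t^N,\mu_t)$ via Kantorovich-Rubinstein duality, since $y\mapsto K(X_t-y)$ is $\mathrm{Lip}(K)$-Lipschitz at fixed $X_t$. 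The quadratic covariation from the noise contributes at most $C|Y_t|^2$ by Lipschitzness of the $\sigma_k$. Combining these with the elementary estimate $|Y|\,W_1 \leq \frac{1}{2}|Y|^2 + \frac{1}{2} W_1^2$ and applying Gronwall gives
\[
\mathbb{E}|Y_t|^2 \leq C_T \int_0^T \mathbb{E}\bigl[W_1(S_s^N,\mu_s)^2\bigr]\,ds,
\]
whose right-hand side vanishes as $N\to\infty$ by a square-moment variant of Theorem \ref{teorema_convergenza} (obtained by rerunning its synchronous-coupling proof in $L^2$) together with $\mathbb{E}[W_1(\mu_0,S_0^N)^2]\to 0$ under a mild moment assumption on $\mu_0$. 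This yields the first assertion.

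For the conditional-law statement, fix $\phi \in C_b^\infty(\mathbb{R}^d)$. Since $\phi$ is bounded and continuous and $X_t^{r,N} \to X_t$ in probability, dominated convergence gives $\phi(X_t^{r,N}) \to \phi(X_t)$ in $L^1(\Omega)$. Conditional expectation being an $L^1$-contraction,
\[
\mathbb{E}\bigl[\phi(X_t^{r,N})\,\big|\,\mathcal{F}_t^B\bigr] \longrightarrow \mathbb{E}\bigl[\phi(X_t)\,\big|\,\mathcal{F}_t^B\bigr] \quad \text{in } L^1(\Omega).
\]
But Theorem \ref{main theorem}, applied to the single test function $\phi$, asserts that this same sequence converges to $\langle\mu_t,\phi\rangle$ in $L^1(\Omega)$. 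Uniqueness of $L^1$ limits forces $\langle\mu_t,\phi\rangle = \mathbb{E}[\phi(X_t)\mid\mathcal{F}_t^B]$ almost surely, which is the conditional-law claim.

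The main obstacle is twofold: the drift decomposition plus Wasserstein duality that injects $W_1(S_t^N,\mu_t)$ into the Gronwall loop, and the handover from a first-moment Wasserstein estimate (Theorem \ref{teorema_convergenza}) to the squared-moment estimate required by the argument above. The decomposition is the standard mean-field trick. The square-moment upgrade is mild: either rerun the synchronous coupling of Theorem \ref{teorema_convergenza} in $L^2$, or avoid the square entirely by applying It\^o's formula to the smoothed absolute value $\sqrt{|Y_t|^2+\epsilon}$ and letting $\epsilon\to 0$, which keeps the estimates linear in $W_1$ and applies Theorem \ref{teorema_convergenza} as is.
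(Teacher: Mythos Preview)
Your argument is correct, but for the first assertion it takes a longer detour than the paper's proof. The paper observes that $X_t^{r,N}$ is the solution of \eqref{strong} with drift $b_{S^N}$ and initial point $X_0^r$, while $X_t$ solves \eqref{strong} with drift $b_\mu$ and the same initial point; Lemma \ref{drift diverso} then gives directly
\[
\E\bigl[\,|X_t^{r,N}-X_t|\,\bigr]\le \gamma_T\,d_{\mathcal{S}}(S^N,\mu),
\]
and Theorem \ref{lemma_convergenza} finishes. This is a one-line $L^1$ estimate, already packaged in the paper's contraction machinery via BDG plus time-splitting. Your It\^o-on-$|Y|^2$ route forces you through $\E[W_1(S_s^N,\mu_s)^2]$, which is not supplied by Theorem \ref{teorema_convergenza}; you then need either an extra moment assumption on $\mu_0$ (essentially invoking the $W_2$ estimate of Theorem \ref{convergenza_norma_2}) or the $\sqrt{|Y|^2+\epsilon}$ trick to stay linear in $W_1$. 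Both fixes work, but they reproduce by hand what Lemma \ref{drift diverso} already contains. The paper's approach buys brevity and avoids the $L^1$/$L^2$ mismatch entirely; your approach is more self-contained and would be the natural choice if Lemma \ref{drift diverso} were not available.

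For the conditional-law claim, your argument is clean and correct: $L^1$-convergence $\phi(X_t^{r,N})\to\phi(X_t)$, contractivity of conditional expectation, Theorem \ref{main theorem} with a single test function (together with the exchangeability Lemma \ref{simmetria} to pass from index $1$ to index $r$), and uniqueness of limits. Note that the paper's own proof of this theorem actually omits the second assertion entirely, so your argument fills a genuine gap.
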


The result is similar to the case of a \textit{deterministic}
environment acting on the
particles, which could be modeled by the equations
%
\begin{eqnarray}
\frac{dX_{t}^{i,N}}{dt}  =\frac{1}{N}\sum_{j=1}^{N}K
\bigl( X_{t}%
^{i,N}-X_{t}^{j,N}
\bigr) +v \bigl( t,X_{t}^{i,N} \bigr),
\nonumber\\
\eqntext{i  =1,\ldots,N.}
\end{eqnarray}
As  shown by \cite{Do}, this system satisfies a propagation of chaos
property with the limit deterministic inviscid PDE
\[
\frac{\partial\mu_{t}}{\partial t}+\operatorname{div} ( b_{\mu
_{t}}%
\mu_{t} ) \,dt+\operatorname{div} \bigl( v ( x ) \mu _{t}
\bigr) =0.
\]
Also some technical steps of our proof are strongly inspired by \cite{Do}.
Moreover, with a different proof and partially a different purpose,
some of
the technical steps about existence and (especially) stability
results for measure-valued stochastic equations have been proved before by
\cite{Pi,Pi2,Sk}.

We do not treat here a number of additional interesting questions that are
postponed to future works, like: (i) the fact that $\mu_{t}$ should
have a
density with respect to Lebesgue measure if this is assumed for $\mu_{0}$;
(ii) the uniqueness of solutions to the SPDE (\ref{SPDE}) (which seems
to be
true in some class of integrable functions when $\mu_{0}$ has an integrable
density, but it is less clear in spaces of measure-valued solutions);
(iii) possible generalizations to non-Lipschitz continuous interation kernel
$K$. In particular, the problem of propagation of chaos for system
(\ref{SDE 1}) when $K ( x ) =\frac{x^{\perp}}{\llvert
x\rrvert ^{2}}$, corresponding to point vortices in 2D inviscid fluids,
has been posed by \cite{tre} and seems to be a challenging question.


In Section~\ref{Settings}, we  give some information about the
settings in which we study the problem. Section~\ref{Well_SPDE} is
devoted to the study of existence and uniqueness of equation \eqref
{SPDE Ito form} using its It\^{o} version. Finally, in Section~\ref{sezione_convergenza} we study the convergence and propagation of
chaos results.

\section{Precise setting of the problem}\label{Settings}

\subsection{Assumptions on the noise}\label{assumptions}
We will now state the assumptions which we will consider on the noise.
Recall that $\sigma_k:\R^d\to\R^d$ is a vector field, for every
$k\in\N$.

\begin{hyp}\label{noise}
\textup{(i)} $\sigma_k:\R^d\to\R^d$ are measurable and satisfy\break $\sum_{k=1}^{\infty}\llvert \sigma_{k} (x ) \rrvert
^{2}<+\infty$, for every $x\in\mathbb{R}^{d}$.
\begin{enumerate}[(iii)]
\item[(ii)] $\sigma_k$ is a $C^2$ divergence free vector fields, that is,
\[
\operatorname{div}\sigma_k=0\qquad \forall k\geq1.
\]
%
Define
the matrix-valued function $Q:\mathbb{R}^{d}\times\mathbb{R}^{d}%
\rightarrow\mathbb{R}^{d\times d}$ as
%
\begin{equation}
\label{definition_Q} Q^{ij} ( x,y ) :=\sum_{k=1}^{\infty}
\sigma_{k}^{i} ( x ) \sigma_{k}^{j} (
y ).
\end{equation}
\item[(iii)] With a little abuse of notation, there exists a
function $Q:\mathbb{R}^{d}\rightarrow\mathbb{R}^{d\times d}$ such that:
\begin{enumerate}[(a)]
\item[(a)]$Q( x,y) =Q( x-y)$ [space homogeneity of the random field
$\varphi
 ( t,x )
=\sum_{k=1}^{\infty}\sigma_{k} ( x ) B_{t}^{k}$];
\item[(b)]$Q (0 ) =\mathrm{Id}$;
\item[(c)]$Q ( \cdot ) $ is of class $C^{2}$ with second
derivatives uniformly bounded in the euclidean norm of $\R^{d\times
d}$, that is, $\sup_{x\in\R^d}|\partial^2_{x_ix_j}Q(x)|<+\infty$.
Here, we
are using the Hilbert--Schmidt norm on the space of the matrices.
\end{enumerate}
\end{enumerate}
\end{hyp}

One can find examples of this model in several references, for example,
\cite{DFV} and~\cite{Ku90}. We recall here the most important
properties of this type of noise and we give an explicit example.

\begin{remark}
Under the previous assumptions, we have
%
\begin{equation}
\sum_{k=1}^{\infty}\bigl\llvert
\sigma_{k} ( x ) -\sigma _{k} ( y ) \bigr\rrvert
^{2}\leq L_{\sigma}^{2}\llvert x-y\rrvert
^{2} \qquad\mbox{for all }x,y\in\mathbb{R}^{d}\label{Lipschitz property}%
\end{equation}
for some constant $L_{\sigma}>0$. Indeed,
\[
\sum_{k=1}^{\infty}\bigl\llvert
\sigma_{k} ( x ) -\sigma _{k} ( y ) \bigr\rrvert
^{2}=2\operatorname{Tr} \bigl( Q ( 0 ) \bigr) -2\operatorname{Tr} \bigl( Q ( x-y ) \bigr).
\]
The function $f ( z ) =\operatorname{Tr} ( Q ( z )  )
$ has
the property $f ( -z ) =f ( z ) $, hence from the
identity $2f ( z ) =f ( z ) +f ( -z )
$ and
Taylor development of both $f ( z ) $ and $f ( -z
) $ we
get $2f ( z ) =2f ( 0 ) + \langle
D^{2}f (
0 ) z,z \rangle+o ( \llvert  z\rrvert
^{2} ) $
which implies\break $\sum_{k=1}^{\infty}\llvert \sigma_{k} (
x )
-\sigma_{k} ( y ) \rrvert ^{2}\leq C_{1}\llvert
x-y\rrvert ^{2}$ if $\llvert  x-y\rrvert \leq1$, for a suitable
constant $C_{1}>0$. When $\llvert  z\rrvert >1$ we have $f (
z ) \leq C_{2}\llvert  z\rrvert ^{2}$ for a suitable constant
$C_{2}>0$, because $Q ( \cdot ) $ has bounded second derivative.
Hence, $\sum_{k=1}^{\infty}\llvert \sigma_{k} ( x )
-\sigma
_{k} ( y ) \rrvert ^{2}\leq C_{2}\llvert  x-y\rrvert
^{2}$ when $\llvert  x-y\rrvert >1$. This proves
(\ref{Lipschitz property}) with $L_{\sigma}^{2}=\max ( C_{1}%
,C_{2} ) $.
\end{remark}

It is also important to notice that the covariance function $Q$ can be
given first. Indeed Theorem~4.2.5 of \cite{Ku90} states that any matrix
valued function $Q:(x,y)\to Q(x,y)$ satisfying \eqref{Lipschitz
property} can be expressed in the form \eqref{definition_Q}.
A very common example of this kind of noise is the isotropic random
field, which we  present now.

\begin{ex}\label{ex1}
Let $d\geq2$ and $f\in L^1(\R_+)$ such that $\int_{\R^d}|y|^2
f(|y|) \,d
y<+\infty$. Given $\pi(y)$ a $d\times d$ matrix defined as
\[
\pi(y)=(1-p)\mathrm{Id}_d+|y|^{-2}(pd-1)y\otimes y \qquad\mbox{for } y\in
\R ^d, p\in[0,1],
\]
we consider
\[
Q(x)=\int_{\R^d}e^{iy\cdot x}\pi(y)f\bigl(|y|\bigr) \,d y,\qquad x\in
\R^d.
\]
It is easy to see that property (iii)(a) is satisfied. Property
(iii)(c) is true after a renormalization in $L^1$ of $f$ and (iii)(c)
can be verified with a straightforward computation.
\end{ex}

\begin{remark}
A strong solution of system \eqref{SDE 1} is a continuous process
$ (
X^{1,N}, \ldots,X^{N,N} ) $, adapted to $ ( \mathcal
{F}_{t}^{B} )
_{t\geq0}$, such that
\[
P \Biggl( \sum_{k=1}^{\infty}\int
_{0}^{T}\bigl\llvert \sigma_{k} \bigl(
X_{t}^{i,N} \bigr) \bigr\rrvert ^{2}\,dt<\infty
\Biggr) =1
\]
for every $i=1,\ldots,N$ (so that the series of stochastic integrals converge
in probability) and identity \eqref{SDE 1} holds in the integral sense.
But\break $%
\sum_{k=1}^{\infty}\llvert \sigma_{k} ( X_{t}^{i,N} )
\rrvert ^{2}=\operatorname{Tr} ( Q ( 0 )  ) =d$, hence the
sum of
stochastic integrals in equation \eqref{SDE 1} always converges, even
in mean square.
\end{remark}

\subsection{It\^o formulation}\label{Ito formulation}
In the \hyperref[Introduction]{Introduction}, for the benefit of interpretation, we have
formulated the interacting particle system and the limit SPDE both in
Stratonovich form. However, for  the sake of rigor and mathematical
simplicity, it is convenient to work in the corresponding It\^{o} form.
Under Hypothesis \ref{noise}, the interacting particle
system in It\^{o} form is
%
\begin{eqnarray}\label{SDE 2}
dX_{t}^{i,N}  =\frac{1}{N}\sum
_{j=1}^{N}K \bigl( X_{t}^{i,N}-X_{t}%
^{j,N}
\bigr) \,dt+\sum_{k=1}^{\infty}
\sigma_{k} \bigl( X_{t}^{i,N} \bigr)
\,dB_{t}^{k},
\nonumber
\\[-8pt]
\\[-8pt]
\eqntext{i  =1,\ldots,N}
\end{eqnarray}
and the SPDE \eqref{SPDE Ito form} in It\^{o} form is
%
\begin{equation}
\label{SPDE} d\mu_{t}+\operatorname{div} ( b_{\mu_{t}}
\mu_{t} ) \,dt+\sum_{k=1}^{\infty
}
\operatorname{div} \bigl( \sigma_{k} ( x ) \mu_{t} \bigr)
\,dB_{t}^{k}=\frac{1}2\Delta\mu_t,
\end{equation}
which will be interpreted in weak form in Definition~\ref{def
measure-valued solution} below. At the rigorous level, these are
the equations to which the statements of the \hyperref[Introduction]{Introduction} apply.
Let us motivate the fact that \eqref{SDE 2} and \eqref{SPDE} correspond
to \eqref{SDE 1} and \eqref{SPDE Ito form} under Hypothesis~\ref
{noise}. This correspondence can be made rigorous but it requires
[especially for \eqref{SPDE Ito form}] proper definitions of solutions
and a number of details. If we accept that \eqref{SDE 1} and \eqref
{SPDE Ito form} are given only for interpretation ad the rigorous
setup is given by \eqref{SDE 2} and \eqref{SPDE}, an heuristic proof
of their equivalence is sufficient.
The correspondence between \eqref{SDE 1} and \eqref{SDE 2} is due to
the fact that the Stratonovich
integral $\int_{0}^{t}\sigma_{k} ( X_{s}^{i,N} )
\circ dB_{s}^{k}$ is equal to
\[
\int_{0}^{t}\sigma_{k} \bigl(
X_{s}^{i,N} \bigr) \,dB_{s}^{k}+
\frac
{1}{2}%
\int_{0}^{t} ( D
\sigma_{k}\cdot\sigma_{k} ) \bigl( X_{s}^{i,N}
\bigr) \,ds
\]
%
(see \cite{Ku90}) where $ ( D\sigma_{k}\cdot\sigma_{k} )
_{i} (
x ) =\sum_{j=1}^{d}\sigma_{k}^{j} ( x ) \partial_{j}
\sigma_{k}^{i} ( x ) $. This correction term vanishes thanks
to the assumption
\[
\operatorname{div}\sigma_{k}=0\qquad\mbox{for each }k\in
\mathbb{N}%
\]
[it is natural if we interpret $v ( t,x ) $ as the velocity
field of an incompressible
fluid] along with the assumptions on $Q$ made above. Indeed,
\begin{eqnarray*}
0= \Biggl( \sum_{j=1}^{d}
\partial_{j} \Biggr) Q^{ij} ( 0 ) &=&\sum
_{k=1}^{\infty}\sum_{j=1}^{d}
\partial_{j} \bigl( \sigma _{k}^{j} ( x )
\sigma_{k}^{i} ( x ) \bigr) \\
&=&\sum
_{k=1}^{\infty}%
\sum
_{j=1}^{d}\sigma_{k}^{j} ( x )
\partial_{j}\sigma_{k} 
^{i} ( x ).
\end{eqnarray*}
Therefore, the Stratonovich and It\^{o} formulations coincide for the
interacting particle system.

Let us discuss now the correspondence between \eqref{SPDE Ito form} and
\eqref{SPDE}.
The Stratonovich integral $\int_{0}^{t}\operatorname{div} ( \sigma
_{k} (
x ) \mu_{s} ) \circ dB_{s}^{k}$ is formally equal to (one should
write all terms applied to test functions)
\[
\int_{0}^{t}\operatorname{div} \bigl(
\sigma_{k} ( x ) \mu _{s} \bigr) \,dB_{s}^{k}-
\frac{1}{2}\int_{0}^{t}\operatorname{div} \bigl(
\sigma_{k} ( x ) \operatorname{div} \bigl( \sigma_{k} ( x ) \mu
_{s} \bigr) \bigr) \,ds
\]
[the second term, with heuristic language, is initially given by\break $\frac
{1}{2}%
\int_{0}^{t}\operatorname{div} ( \sigma_{k} ( x ) \,d
\langle
\mu
, B^{k} \rangle_{s} ) $ where $ \langle\mu
, B^{k} \rangle_{s}$ is the mutual quadratic covariation; then
we use
again equation \eqref{SPDE Ito form} to compute $d \langle\mu
,B^{k} \rangle_{s}$
and get $d \langle\mu,B^{k} \rangle_{s}=\operatorname{div} (
\sigma
_{k} ( x ) \mu_{s} ) \,ds$]. Now we see that
%
\begin{eqnarray}\label{new identity Ito Strat}%
&&\sum_{k=1}^{\infty}\operatorname{div} \bigl(
\sigma_{k} ( x ) \operatorname{div} \bigl( \sigma_{k} ( x )
\mu_{s} \bigr) \bigr)
\nonumber
\\[-8pt]
\\[-8pt]
\nonumber
&&\qquad=\sum_{\alpha,\beta=1}^{d}
\partial_{\alpha}\partial_{\beta} \bigl( Q^{\alpha\beta} ( x,x )
\mu_{s} \bigr) -\operatorname {div} \Biggl( \Biggl( \sum
_{k=1}^{\infty}D\sigma_{k}\cdot
\sigma_{k} \Biggr) \mu _{s} \Biggr),
\end{eqnarray}
where $D\sigma_{k}\cdot\sigma_{k}$ is the vector field with components
\[
( D\sigma_{k}\cdot\sigma_{k} ) ^{\alpha}=\sum
_{\beta
=1}^{d} \bigl( \partial_{\beta}
\sigma_{k}^{\alpha} \bigr) \sigma_{k}^{\beta}.
\]
Indeed,
\begin{eqnarray*}
\sum_{k=1}^{\infty}\operatorname{div} \bigl(
\sigma_{k} ( x ) \operatorname{div} \bigl( \sigma_{k} ( x )
\mu_{s} \bigr) \bigr) &=&\sum_{k=1}^{\infty}
\sum_{\alpha,\beta=1}^{d}\partial_{\alpha
}
\bigl( \sigma_{k}^{\alpha} ( x ) \partial_{\beta} \bigl(
\sigma _{k}%
^{\beta} ( x ) \mu_{s} \bigr)
\bigr)
\\
&=&\sum_{k=1}^{\infty}\sum
_{\alpha,\beta=1}^{d}\partial_{\alpha
}\partial
_{\beta
} \bigl( \sigma_{k}^{\alpha} ( x )
\sigma_{k}^{\beta
} ( x ) \mu_{s} \bigr)\\
&&{} -\sum
_{k=1}^{\infty}\sum_{\alpha,\beta
=1}%
^{d}
\partial_{\alpha} \bigl( \bigl( \partial_{\beta}\sigma
_{k}^{\alpha
} \bigr) ( x ) \sigma_{k}^{\beta}
( x ) \mu _{s} \bigr)
\end{eqnarray*}
and $\sum_{k=1}^{\infty}\sigma_{k}^{\alpha} ( x ) \sigma
_{k}%
^{\beta} ( x ) =Q^{\alpha\beta} ( x,x ) $. Moreover,
%
\begin{eqnarray}\label{second new identity}%
\sum_{k=1}^{\infty} \bigl( D
\sigma_{k} ( x ) \cdot\sigma _{k} ( x ) \bigr)
^{\alpha}&=&\sum_{k=1}^{\infty}\sum
_{\beta
=1}^{d} \bigl( \partial_{\beta}
\sigma_{k}^{\alpha} ( x ) \bigr) \sigma _{k}^{\beta}
( x )
\nonumber
\\[-8pt]
\\[-8pt]
\nonumber
&=&\sum_{\beta=1}^{d}\partial_{\beta}%
Q^{\alpha\beta}
( x,x ) -\sum_{k=1}^{\infty}\sigma
_{k}^{\alpha
} ( x ) \operatorname{div}\sigma_{k} ( x
) .
\end{eqnarray}
In view of the next section, we stress that until now we have not used
Hypothesis \ref{noise}. Under Hypothesis \ref{noise}, we have
$Q^{\alpha
\beta} ( x,x )
=\delta_{\alpha\beta}$ and $\operatorname{div}\sigma_{k}=0$, hence
$\sum_{k=1}^{\infty} ( D\sigma_{k} ( x ) \cdot\sigma
_{k} (
x )  ) ^{\alpha}=0$ for all $\alpha=1,\ldots,d$, and finally
\[
\sum_{k=1}^{\infty}\operatorname{div} \bigl(
\sigma_{k} ( x ) \operatorname{div} \bigl( \sigma_{k} ( x )
\mu_{s} \bigr) \bigr) =\Delta\mu_{s}.
\]
Therefore, the It\^{o} formulation of equation \eqref{SPDE Ito form} is
\eqref{SPDE}.

\subsection{Extensions and variants}\label{extensions and variants}

As we remarked in the \hyperref[Introduction]{Introduction}, we chose to work under Hypothesis
\ref{noise}  since it leads to particularly simple and elegant
equations and
relations between It\^{o} and Stratonovich formulations. However, all the
results hold in more general cases, some of which we discuss here.

Assume $u,\sigma_{k}: [ 0,T ] \times\mathbb
{R}^{d}\rightarrow
\mathbb{R}^{d}$, $k\in\mathbb{N}$, are measurable vector fields such
that, for
some constants $C,L>0$
\begin{eqnarray*}
\bigl\llvert u ( t,x ) \bigr\rrvert ^{2}+\sum
_{k=1}^{\infty
}\bigl\llvert \sigma_{k} ( t,x )
\bigr\rrvert ^{2}&\leq& C \bigl( 1+\llvert x\rrvert ^{2}
\bigr),
\\
\bigl\llvert u ( t,x ) -u ( t,y ) \bigr\rrvert ^{2} 
+\sum
_{k=1}^{\infty}\bigl\llvert \sigma_{k}
( t,x ) -\sigma _{k} ( t,y ) \bigr\rrvert ^{2}&\leq& L\llvert
x-y\rrvert ^{2}%
\end{eqnarray*}
for all $x,y\in\mathbb{R}^{d}$ and all $t\in [ 0,T ] $.
Under these
conditions, always with $K$ Lipschitz continuous, consider the system of
equations in It\^{o} form
%
\begin{eqnarray}\label{SDE 3}
dX_{t}^{i,N}  =\frac{1}{N}\sum
_{j=1}^{N}K \bigl( X_{t}^{i,N}-X_{t}%
^{j,N}
\bigr) \,dt+u \bigl( t,X_{t}^{i,N} \bigr) \,dt+\sum
_{k=1}^{\infty
}%
\sigma_{k} \bigl(
t,X_{t}^{i,N} \bigr) \,dB_{t}^{k},
\nonumber
\\[-8pt]
\\[-8pt]
\eqntext{i  =1,\ldots,N.}
\end{eqnarray}
Set
\begin{eqnarray*}
Q_{t}^{\alpha\beta} ( x,y ) & :=&\sum_{k=1}^{\infty
}
\sigma _{k}^{\alpha} ( t,x ) \sigma_{k}^{\beta}
( t,y ),
\\
a^{\alpha\beta} ( t,x ) & :=&Q_{t}^{\alpha\beta} ( x,x ).
\end{eqnarray*}
All results of the present paper hold true in this case with the corresponding
SPDE given by
%
\begin{eqnarray}\label{SPDE general}%
&&d\mu_{t}+\operatorname{div} \bigl( ( b_{\mu_{t}}+u ) \mu
_{t} \bigr) \,dt+\sum_{k=1}^{\infty}
\operatorname{div} ( \sigma _{k}\mu _{t} )
\,dB_{t}^{k}
\nonumber
\\[-8pt]
\\[-8pt]
\nonumber
&&\qquad=\frac{1}{2}\sum
_{\alpha,\beta
=1}^{d}\partial _{\alpha
}
\partial_{\beta} \bigl( a^{\alpha\beta} ( t,\cdot ) \mu_{t}%
^{N}
\bigr) \,dt
\end{eqnarray}
(to be interpreted in weak form similar to Definition~\ref{def
measure-valued solution} below). The connection between these two
equations can be seen informally in a few lines by applying It\^{o}
formula to
$\phi ( X_{t}^{i,N} ) $, with $\phi\in C_{c}^{\infty
} (
\mathbb{R}^{d} ) $; the result is that $S_{t}^{N}$ satisfies
\begin{eqnarray*}
d \bigl\langle S_{t}^{N},\phi \bigr\rangle&= &\bigl\langle
S_{t}^{N},\nabla \phi\cdot ( b_{\mu_{t}}+u ) \bigr
\rangle \,dt+\sum_{k=1}%
^{\infty} \bigl
\langle S_{t}^{N},\nabla\phi\cdot\sigma_{k} ( t,
\cdot ) \bigr\rangle \,dB_{t}^{k}\\
&&{}+ \Biggl\langle
S_{t}^{N},\frac{1}%
{2}\sum
_{\alpha,\beta=1}^{d}a^{\alpha\beta} ( t,\cdot )
\partial_{\alpha}\partial_{\beta}\phi \Biggr\rangle \,dt,
\end{eqnarray*}
which is the weak formulation of the SPDE \eqref{SPDE general} above.

\begin{remark}
Assuming a suitable differentiability of $\sigma_{k} ( t,\cdot
) $
in the $t$ variable, we may rewrite the SPDE (\ref{SPDE general}) in
Stratonovich form. We keep this remark at heuristic level, to avoid
unnecessary details. As in the previous section, the Stratonovich integral
$\int_{0}^{t}\operatorname{div} ( \sigma_{k} ( s,x )
\mu
_{s} ) \circ dB_{s}^{k}$ is equal to the It\^{o} integral $\int_{0}
^{t}\operatorname{div} ( \sigma_{k} ( s,x ) \mu
_{s} )
\,dB_{s}^{k}$ plus the correction term
%
\begin{equation}
\tfrac{1}{2} \bigl[ \operatorname{div} \bigl( \sigma_{k} (
\cdot,x ) \mu_{\cdot} \bigr),B_{\cdot}^{k} \bigr]
_{t}%
.\label{covariation sect 2.3}%
\end{equation}
Now, $\sigma_{k} ( t,x ) \mu_{t}$ formally satisfies the identity
(by It\^{o}'s formula)
\[
d \bigl( \sigma_{k} ( t,x ) \mu_{t} \bigr) =
\frac
{\partial
\sigma_{k}}{\partial t} ( t,x ) \mu_{t}\,dt+\sigma_{k} ( t,x ) \,d
\mu_{t}%
\]
hence only the term
\[
-\sigma_{k} ( t,x ) \sum_{k^{\prime}=1}^{\infty
}
\operatorname {div}%
 \bigl( \sigma_{k^{\prime}} ( t,x )
\mu_{t} \bigr) \,dB_{t} 
^{k^{\prime}}%
\]
contributes to the quadratic covariation (\ref{covariation sect 2.3}), which
is thus equal (as in the previous section) to
\[
-\frac{1}{2}\int_{0}^{t}
\operatorname{div} \bigl( \sigma_{k} ( s,x ) \operatorname{div} \bigl(
\sigma_{k} ( s,x ) \mu _{s} \bigr) \bigr) \,ds.
\]
From identity (\ref{new identity Ito Strat}), where now $Q^{\alpha
\beta
} ( x,x ) $ is replaced by $a^{\alpha\beta} (
t,x ) $,
we get that $\mu_{t}$ satisfies (in weak form) the Stratonovich equation
%
\begin{equation}\quad
d\mu_{t}=-\operatorname{div} \bigl( ( b_{\mu_{t}}+u ) \mu
_{t} \bigr) \,dt-\sum_{k=1}^{\infty}
\operatorname{div} \bigl( \sigma _{k} ( t,\cdot ) \mu_{t}
\bigr) \circ dB_{t}^{k}+\mathcal{D} ( t,\cdot )
\mu_{t}\,dt,\label{general Strat SPDE}%
\end{equation}
where the first-order differential operator $\mathcal{D} (
t,x
) $
is given by
\[
\mathcal{D}f:=\frac{1}{2}\operatorname{div} \Biggl( \sum
_{k=1}^{\infty}%
D\sigma_{k}\cdot
\sigma_{k} f \Biggr).
\]
\end{remark}

\begin{remark}
The Stratonovich reformulation (\ref{general Strat SPDE}) reveals that the
true nature of the SPDE (\ref{SPDE general}) is not parabolic but of a
first-order equation, informally speaking of hyperbolic type.
\end{remark}

If we start from the beginning with the Stratonovich equation,
\[
dX_{t}^{i,N}=\frac{1}{N}\sum
_{j=1}^{N}K \bigl( X_{t}^{i,N}-X_{t}^{j,N}
\bigr) \,dt+u \bigl( t,X_{t}^{i,N} \bigr) \,dt+\sum
_{k=1}^{\infty}\sigma _{k} \bigl(
t,X_{t}^{i,N} \bigr) \circ dB_{t}^{k}%
\]
in place of (\ref{SDE 3}), we may rewrite it in the It\^{o} form
\begin{eqnarray*}
dX_{t}^{i,N} & =&\frac{1}{N}\sum
_{j=1}^{N}K \bigl( X_{t}^{i,N}-X_{t}%
^{j,N}
\bigr) \,dt+u \bigl( t,X_{t}^{i,N} \bigr) \,dt
\\
&&{} +\sum_{k=1}^{\infty}\sigma_{k}
\bigl( t,X_{t}^{i,N} \bigr) \,dB_{t}^{k}%
+
\frac{1}{2}\sum_{k=1}^{\infty} ( D
\sigma_{k}\cdot\sigma _{k} ) \bigl( t,X_{t}^{i,N}
\bigr) \,dt,
\end{eqnarray*}
where $ ( D\sigma_{k}\cdot\sigma_{k} ) ^{\alpha}=\sum_{\beta=1}
^{d}\partial_{\beta}\sigma_{k}^{\alpha}\sigma_{k}^{\beta}$. This
is the
case because the
correction term of the $\alpha$-component is
\[
\frac{1}{2}\sum_{k=1}^{\infty}\,d \bigl[
\sigma_{k}^{\alpha} \bigl( \cdot,X_{\cdot}^{i,N}
\bigr) ,B_{t}^{k} \bigr] _{t}=\frac
{1}{2}
\sum_{k=1}^{\infty}\nabla\sigma_{k}^{\alpha}
\bigl( t,X_{t}^{i,N} \bigr) \cdot\sigma_{k} \bigl(
t,X_{t}^{i,N} \bigr) \,dt
\]
since, under suitable differentiability assumptions on $\sigma_{k}$,
we may
apply It\^{o}'s formula to $\sigma_{k}^{\alpha} (
t,X_{t}^{i,N}
) $
and see that for the quadratic covariation $ [ \sigma_{k}^{\alpha
} (
\cdot,\break X_{\cdot}^{i,N} ) ,B_{t}^{k} ] _{t}$ only the
following term
[part of $\nabla\sigma_{k}^{\alpha} ( t,X_{t}^{i,N} )
\cdot
dX_{t}^{i,N}$] matters:
\[
\nabla\sigma_{k}^{\alpha} \bigl( t,X_{t}^{i,N}
\bigr) \cdot\sum_{k^{\prime
}=1}^{\infty}
\sigma_{k^{\prime}} \bigl( t,X_{t}^{i,N} \bigr)
\,dB_{t}%
^{k^{\prime}}.
\]
Thus we see that under appropriate regularity and summability (in $k$)
properties on $\sigma_{k}$, we may transform the Stratonovich equation into
the It\^{o} one (\ref{SDE 3}) and apply the previous result. The additional
drift
%
\begin{equation}
\frac{1}{2}\sum_{k=1}^{\infty} ( D
\sigma_{k}\cdot\sigma _{k} ) ( t,x ) \label{additional drift}%
\end{equation}
appears in the It\^{o} formulation.

Finally, we have seen that two annoying correction terms appear in the
computations above, namely $\mathcal{D} ( t,\cdot ) \mu
_{t}$ in
the SPDE (\ref{general Strat SPDE}) and the additional drift
(\ref{additional drift}). Both are related to passages from It\^{o} to
Stratonovich forms. Both of them are equal to zero if we assume
\[
\sum_{k=1}^{\infty}D\sigma_{k}\cdot
\sigma_{k}=0.
\]
Similar to \eqref{second new identity}, this can be rewritten as
\[
\sum_{\beta=1}^{d}\partial_{\beta}a^{\alpha\beta}
( t,x ) -\sum_{k=1}^{\infty}
\sigma_{k}^{\alpha}\operatorname{div}\sigma_{k}=0.
\]
A sufficient condition thus is the pair of assumptions
\begin{eqnarray*}
&&a^{\alpha\beta} ( t,x ) \qquad\mbox{independent of }x,
\\
&&\operatorname{div}\sigma_{k}=0\qquad \mbox{for every }k,
\end{eqnarray*}
which are part of Hypothesis \ref{noise}.

\subsection{Some definitions}

Recall the definition of the empirical measure $S_t^N:=\frac{1}N\sum_{i=1}^N\delta_{X_t^{i,N}}$, which can be used, as we did in the
\hyperref[Introduction]{Introduction}, to rewrite the drift coefficient as $b_{S_t^N}(x)=K\ast
S^N_t(x)=\frac{1}N\sum_{j=1}^NK(x-X_t^{j,N})$. We can thus write equation
\eqref{SDE 2}, for $i=1,\ldots,N$, as
\[
d X^{i,N}_t=b_{S_t^N}\bigl(X_t^{i,N}
\bigr) \,d t+\sum_{k=1}^\infty\sigma
_k\bigl(X^{i,N}_t\bigr) \,d B_t^k.
\]
If we take a test function $\phi\in C^2_b(\R^d)$ and we apply It\^{o}'s
formula, from the assumptions on $Q$ it follows, for $ i=1,\ldots,N$,
\begin{eqnarray*}
d \phi\bigl(X_t^{i,N}\bigr)&=& \biggl[\nabla\phi
\bigl(X_t^{i,N}\bigr)\cdot b_{S_t^N}
\bigl(X_t^{i,N}\bigr) +\frac{1}2\Delta\phi
\bigl(X^{i,N}_t\bigr) \biggr] \,d t \\
&&{}+\sum
_{k=1}^\infty\nabla\phi\bigl(X_t^{i,N}
\bigr)\cdot\sigma_k\bigl(X_t^{i,N}\bigr) \,d
B_t^k,
\end{eqnarray*}
which becomes, adding over $N$ and dividing by $N$,
\[
\bigl\langle S_t^N,\phi\bigr\rangle= \biggl[\bigl\langle
S_t^N,\nabla\phi\cdot b_{S_t^N}\bigr\rangle+
\frac{1}2\bigl\langle S_t^N,\Delta\phi\bigr\rangle
\biggr] \,d t+ \sum_{k=1}^\infty\bigl\langle
S_t^N,\nabla\phi\cdot\sigma_k\bigr\rangle \,d
B_t^k.
\]
Hence, $S_{t}^{N}$ is a measure-valued solution of equation (\ref%
{SPDE Ito form}), in the sense of Definition~\ref{def1} below.

We define now the space over which we will study equation \eqref{SPDE
Ito form}.

\begin{definition}\label{Wasserstein_distance}
%
%
$(\mathcal{P}_1(\R^d),W_1)$ is the space of probability measures
$\mu_0$ on $\R^d$ with finite first moment, that is,
\[
\|\mu_0\|:=\int_{\R^d} \,d \mu_0=1,\qquad
M_1(\mu_0):=\int_{\R^d}|x| \,d \mu
_0(x)<\infty
\]
endowed with the 1-Wasserstein metric defined as
\[
W_1(\nu_0,\mu_0)=\inf_{m\in\Gamma(\mu_0,\nu_0)}
\int_{\R
^{2\,d}}|x-y| m( d x, d y), \qquad\mu_0,
\nu_0\in\mathcal{P}_1\bigl(\R^d\bigr).
\]
Here, $\Gamma(\mu_0,\nu_0)$ is the set of the finite measures on $\R
^{2d}$ with first and second marginals equal respectively to $\mu_0$
and $\nu_0$, namely
\begin{eqnarray*}
&&\hspace*{-4pt}\Gamma(\mu_0,\nu_0)\\
&&\hspace*{-4pt}\qquad=\bigl\{m \in\mathcal{P}_1
\bigl(\R^{2d}\bigr) : m\bigl(A\times\R ^d\bigr)=
\mu_0(A), m\bigl(\R^d\times A\bigr)=\nu_0(A),
\forall A\in\mathcal{B}\bigl(\R ^d\bigr)\bigr\}.
\end{eqnarray*}
$\mathcal{S}$ will be the space of the stochastic processes
taking values on $(\mathcal{P}_1(\R^d),W_1)$,
\[
\mu: [0,T]\times\Omega\to\mathcal{P}_1\bigl(\R^d\bigr)
\]
such that $\E [\sup_{t\in[0,T]}\int_{\R^d}|x| \,d\mu
_t(x)
]<\infty$ and $\langle\mu_t,\phi\rangle$ is $\mathcal
{F}_t$-adapted for
every test function $\phi\in C_b^\infty(\R^d)$.
We endow $\mathcal{S}$ with the following distance:
\[
 d_{\mathcal{S}}(\mu,\nu):=\E \Bigl[\sup
_{t\in[0,T]}W_1(\mu_t,\nu _t)
\Bigr], 
\]
where $\mu=(\mu_t)_{t\in[0,T]},\nu=(\nu_t)_{t\in[0,T]}\in
\mathcal{S}$.
\end{definition}


\begin{remark}
The metric space $(\mathcal{P}_1(\R^d),W_1)$ has been well studied in
optimal transportation theory and extensive results on it can be found
in the literature, (see, e.g., \cite{AGS}). In particular, this space
is complete and separable (Proposition~7.1.5 of \cite{AGS}).
Hence, follows from standard arguments that $(\mathcal{S},d_{\mathcal
{S}})$ is also a complete metric space.
\end{remark}

\begin{hyp}\label{Initial Condition} Concerning the initial condition
$\mu_0 : \Omega\to\mathcal{P}_1(\R^d)$ of equation \eqref{SPDE Ito
form} we shall always assume that:
\begin{longlist}[(ii)]
\item[(i)]$\mu_0$ is $\mathcal{F}_0$-measurable;
\item[(ii)]$\E [\int_{\R^d}|x| \,d \mu_0(x) ]< \infty$.
\end{longlist}
\end{hyp}

For every $\mu_0$ that satisfies the previous hypothesis, we call
$\mathcal{S}_{\mu_0}$ the set of $\mu\in\mathcal{S}$ such that
$\mu
|_{t=0}=\mu_0$.

\begin{definition}\label{def1}
\label{def measure-valued solution}A family $ \{
\mu_{t} ( \omega );t\geq0,\omega\in\Omega \} $ of
random probability measures taking value in $\mathcal{P}_1(\R^d)$ is a
measure-valued solution of equation (\ref{SPDE Ito form}) if:
\begin{longlist}[(i)]
\item[(i)] for all $\phi\in C_{b} ( \mathbb{R}^{d} ) $, $
\langle\mu_{t},\phi \rangle$ is an adapted process with a
continuous version,
\item[(ii)]
for all $\phi\in C_{b}^{2} ( \mathbb{R}^{d} ) $
\begin{eqnarray*}
\langle\mu_{t},\phi \rangle&=& \langle\mu _{0},\phi
\rangle+\int_{0}^{t} \langle\mu_{s},b_{\mu_{s}}
\cdot \nabla \phi \rangle \,ds+\frac{1}{2}\int_{0}^{t}
\langle\mu _{s},\Delta \phi \rangle \,ds
\\
&&{}+\sum_{k=1}^{\infty}\int_{0}^{t}
\langle\mu_{s},\sigma _{k}\cdot \nabla\phi \rangle
\,dB_{s}^{k}.
\end{eqnarray*}
\end{longlist}
\end{definition}

\begin{remark}
Notice that the infinite sum in the previous equation converges under
our assumptions. Indeed, if $\phi\in C_b^2(\R^d)$, it holds, by It\^{o}
isometry and Jensen inequality,
\begin{eqnarray*}
\E \Biggl[\Biggl\llvert \sum_{k=1}^\infty\int
_0^t\langle\mu_s,\sigma
_k\cdot\nabla \phi\rangle \,d B_s^k\Biggr
\rrvert ^2 \Biggr]&=&\E \Biggl[\sum_{k=1}^\infty
\int_0^t\langle\mu_s,
\sigma_k\cdot\nabla\phi\rangle^2 \,d s \Biggr]\\
&\leq&\E \Biggl[
\sum_{k=1}^\infty\int_0^t
\bigl\langle\mu_s,|\sigma_k\cdot \nabla\phi
|^2\bigr\rangle \,d s \Biggr].
\end{eqnarray*}
Now, by the assumptions on $\sigma_k$, we have
\begin{eqnarray*}
\sum_{k=1}^\infty\bigl|\sigma_k(x)
\cdot\nabla\phi(x)\bigr|^2&\leq&\sum_{k=1}^\infty
\bigl|\nabla\phi(x)\bigr|^2\bigl|\sigma_k(x)\bigr|^2=\bigl|\nabla
\phi(x)\bigr|^2\sum_{k=1}^\infty \bigl|
\sigma_k(x)\bigr|^2\\
&\leq& C\bigl|\nabla\phi(x)\bigr|^2<+\infty.
\end{eqnarray*}
\end{remark}

\section{Well posedness of the stochastic PDE}\label{Well_SPDE}
In this chapter, we study the well posedness of equation \eqref{SPDE
Ito form}, and thus we prove the following.

\begin{theorem}\label{maintheo}
Let $T\geq0$ and $\mu_0:\Omega\to\mathcal{P}_1(\R^d)$ be as in
Hypothesis \ref{Initial Condition}. There exists a unique solution
$\mu
=(\mu_t)_{t\in[0,T]}$ of equation \eqref{SPDE Ito form} in the sense of
Definition~\ref{def1} starting from $\mu_0$ and defined up to time $T$,
that can be seen as the only fixed point of the operator \eqref{contr}
defined below.
\end{theorem}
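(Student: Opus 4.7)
The plan is a Banach fixed point argument on the complete metric space $(\mathcal{S}_{\mu_0}, d_{\mathcal{S}})$. For any $\nu = (\nu_t)_{t\in[0,T]} \in \mathcal{S}_{\mu_0}$ the frozen drift $b_{\nu_t}(x) = \int_{\R^d} K(x-y)\,\nu_t(dy)$ is Lipschitz in $x$ uniformly in $(t,\omega)$ because $K$ is Lipschitz, so the linear SDE with random coefficients
\begin{equation*}
dY_t^\nu = b_{\nu_t}(Y_t^\nu)\,dt + \sum_{k=1}^\infty \sigma_k(Y_t^\nu)\,dB_t^k, \qquad Y_0^\nu = X_0,
\end{equation*}
has a unique $(\mathcal{F}_t)$-adapted strong solution under Hypothesis \ref{noise}; here $X_0$ is an $\mathcal{F}_0$-measurable lift of $\mu_0$ taken (after enlarging the probability space if needed) to be independent of the driving Brownian motions. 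The operator $\Phi$ of \eqref{contr} is then defined by
\begin{equation*}
\Phi(\nu)_t(\omega) := \mathrm{Law}(Y_t^\nu \,|\, \mathcal{F}_t^B)(\omega).
\end{equation*}
Applying It\^o's formula to $\phi(Y_t^\nu)$ for $\phi \in C_b^2(\R^d)$ and conditioning on $\mathcal{F}_t^B$, using independence of $X_0$ and $(B^k)$, shows that $\Phi(\nu)$ is a measure-valued solution in the sense of Definition \ref{def1} of the \emph{linear} SPDE obtained from \eqref{SPDE} by freezing $b_{\mu_s}$ at $b_{\nu_s}$; so a fixed point of $\Phi$ solves \eqref{SPDE}.

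I then verify $\Phi(\mathcal{S}_{\mu_0}) \subset \mathcal{S}_{\mu_0}$: adaptedness of $\langle \Phi(\nu)_t, \phi\rangle$ comes from the adaptedness of $Y^\nu$ and the tower property, the first moment bound $\E \sup_{t \leq T}\int |x|\,d\Phi(\nu)_t(x) \leq \E \sup_{t \leq T}|Y_t^\nu|$ is controlled by the standard moment estimate for the SDE (using linear growth of $b_{\nu_t}$, itself a consequence of $K$ Lipschitz and the finite first moment of $\nu$, together with Hypothesis \ref{Initial Condition}), and continuity of $t \mapsto \Phi(\nu)_t$ in $W_1$ is inherited from pathwise continuity of $Y^\nu$ by dominated convergence.

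For the contraction I use synchronous coupling: given $\nu,\tilde\nu \in \mathcal{S}_{\mu_0}$ I drive both auxiliary SDEs with the \emph{same} initial datum $X_0$ and the \emph{same} Brownian motions. The Kantorovich--Rubinstein bound
\begin{equation*}
|b_{\nu_s}(x) - b_{\tilde\nu_s}(y)| \leq L_K |x-y| + L_K\, W_1(\nu_s, \tilde\nu_s),
\end{equation*}
combined with \eqref{Lipschitz property}, the Burkholder--Davis--Gundy inequality, and Gr\"onwall, yields an estimate of the form
\begin{equation*}
\E \sup_{s \leq t}|Y_s^\nu - Y_s^{\tilde\nu}|^2 \leq C_T \int_0^t \E \sup_{r \leq s} W_1(\nu_r,\tilde\nu_r)^2 \,ds.
\end{equation*}
Conditionally on $\mathcal{F}_s^B$, the pair $(Y_s^\nu, Y_s^{\tilde\nu})$ is a coupling of $\Phi(\nu)_s$ and $\Phi(\tilde\nu)_s$, hence $W_1(\Phi(\nu)_s, \Phi(\tilde\nu)_s)^2 \leq \E[\,|Y_s^\nu - Y_s^{\tilde\nu}|^2 \,|\, \mathcal{F}_s^B]$. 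Iterating the resulting Picard estimate $n$ times produces a factor $(C_T T)^n / n!$, so $\Phi^n$ is a strict contraction for $n$ large and Banach's fixed point theorem delivers a unique fixed point of $\Phi$, which is the solution asserted by Theorem \ref{maintheo}.

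The delicate point I expect is the bookkeeping in this last step: the natural stochastic estimates live in $L^2$ while $d_{\mathcal{S}}$ is an $L^1$ quantity, and the conditional coupling argument must be arranged so that the time supremum stays on the correct side of both the expectation and the Gr\"onwall integral, lest the iteration fail to compound into the needed factorial decay. The cleanest route is probably to run the contraction in the equivalent stronger metric $(\E \sup_{t \leq T} W_1(\mu_t, \nu_t)^2)^{1/2}$ and deduce the statement for $d_{\mathcal{S}}$ a posteriori; alternatively, one may close the estimate on a subinterval $[0,\delta]$ with $C_T\delta$ small and concatenate solutions by restarting in $\mathcal{S}_{\Phi(\nu)_\delta}$, which only requires the first-moment propagation guaranteed by the SDE estimate.
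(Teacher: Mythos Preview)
Your overall architecture---freeze the drift, solve an auxiliary SDE, recover a measure, and close by a contraction---matches the paper's. But two points diverge in ways that matter.

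\textbf{The operator is not the paper's operator, and yours does not handle random $\mu_0$.} Hypothesis~\ref{Initial Condition} allows $\mu_0:\Omega\to\mathcal{P}_1(\R^d)$ to be a genuinely random measure (the empirical measure $S_0^N$ is the key example), and the theorem statement explicitly refers to the operator \eqref{contr}, which is the \emph{flow push-forward} $(\Phi_{\mu_0}\mu)_t(\omega)=X^\mu(t,\cdot,\omega)_\#\mu_0(\omega)$. Your definition $\Phi(\nu)_t=\mathrm{Law}(Y_t^\nu\mid\mathcal{F}_t^B)$, with $Y^\nu$ started from a single lift $X_0$, averages out the randomness of $\mu_0$ whenever $\mu_0$ is not $\mathcal{F}_t^B$-measurable: for instance if $\mu_0=\delta_{X_0}$ with $X_0$ independent of the Brownians, your $\Phi(\nu)_t$ is deterministic, whereas the correct object is the random Dirac $\delta_{Y_t^\nu}$. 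The fix is either to condition on $\mathcal{F}_t^B\vee\mathcal{F}_0$ or, as the paper does, to work with the full stochastic flow $x\mapsto X^\mu(t,x,\omega)$ and push forward $\mu_0(\omega)$ pointwise in $\omega$. This is also why the paper's estimates are all run \emph{conditionally on $\mathcal{F}_0$} (conditional BDG, Proposition~\ref{ScambioCondizionale}): the integration against the random $\mu_0$ must sit outside those estimates.

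\textbf{The $L^2$ contraction needs a second moment you are not given.} Your main estimate is in $L^2$, but Hypothesis~\ref{Initial Condition} only grants a finite first moment. The paper actually carries out the $W_2$ argument separately (Theorem~\ref{convergenza_norma_2}) under the extra assumption $\mu_0\in\mathcal{P}_2$, and then Remark~\ref{Difference between L1 and L2} explains exactly the obstruction you anticipate in $W_1$: after BDG one is left with $\E\big[(\int_0^t|\cdot|^2\,ds)^{1/2}\big]$, the square root blocks Gr\"onwall, and the only way to close is to bound by $t^{1/2}\,\E[\sup_{s\le t}|\cdot|]$, absorb into the left-hand side on a short subinterval, and concatenate. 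That is precisely your second alternative, and it is what the paper does (Lemmas~\ref{limsol}, \ref{drift diverso}, \ref{existence}, Theorem~\ref{Thm 17}). Your factorial Picard iteration, by contrast, does not close in $L^1$ for the same square-root reason, and also runs into the $\sup$/conditional-expectation commutation issue you yourself flag.
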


We have already seen that the empirical measure $S_t^N$ defined in
\eqref{empirical} satisfies in the distributional sense \eqref{SPDE Ito
form} for every test function $\phi$, moreover it is a probability
measure with finite first moment and the process
\[
\bigl\langle S_t^N,\phi\bigr\rangle=\frac{1}N
\sum_{i=1}^N\phi\bigl(X_t^{i,N}
\bigr)
\]
is $\mathcal{F}_t$-adapted. This is true since the processes
$X^{i,N}_t$ are solutions of the SDE \eqref{SDE 2}, and hence are
adapted and continuous.
Hence, the empirical measure $S_t^N$ satisfies \eqref{SPDE Ito form} in
the sense of Definition~\ref{def1}.

\subsection{Stochastic Liouville equation}
In order to investigate the solutions of equation \eqref{SPDE Ito
form}, we first want to study what happens when the drift coefficient
does not depend on the solution but it is instead a priori defined (but
random). We hence consider the following stochastic differential equation:
%
\begin{eqnarray}
\label{liouville} %
d X_t &=&
b(t,X_t) \,d t + \sum_k^\infty
\sigma_k(X_t) \,d B_t^k,
\nonumber
\\[-8pt]
\\[-8pt]
\nonumber
X_0&=&x\in\R^d,
\end{eqnarray}
where the $\sigma_k$'s are defined as before. Here, $b=b(t,x,\omega)$
is an $\mathcal{F}_t$-adapted process, continuous in $(t,x)$, which satisfies:
\begin{itemize}
\item$b$ Lipschitz continuous in $x$ uniformly in $(t,\omega)$, with
Lipschitz constant $L_b$, not depending on $\omega$ and $t$, that is,
\[
\bigl|b(t,x,\omega)-b(t,y,\omega)\bigr|\leq L_b |x-y| \qquad\forall x,y\in\R
^d, \forall t\in\R, \mathbb{P}\mbox{-a.s.}
\]
\item For every fixed $w$, $b$ has linear growth in $x$ uniformly $t$,
that is,
\[
\bigl|b(t,x,\omega)\bigr|\leq c_1|x|+c_2(\omega)\qquad \forall x\in
\R^d, \forall t\in\R, \mbox{ for } \mathbb{P}\mbox{-a.e. } \omega,
\]
where $c_1\in\R$ and $c_2(\omega)$ is a random variable such that
$\E
[|c_2(\omega)|]<\infty$.
\end{itemize}
%

By classical results on SDEs (see, e.g., \cite{Ku90}), this equation
admits a unique solution $X_t=X(t,x,\omega)$ which is continuous in
time. Moreover, taking into account the following lemma, it follows
from Kolmogorov continuity theorem that there exists a modification of
$X(t,x)$ which is continuous in $x$. It is also jointly continuous in
$(t,x)$ by Kolmogorov theorem for processes taking values in Banach
spaces, precisely in the space $C([0,T];\R^d)$. This results on
continuity of the stochastic flow of equation \eqref{liouville} can be
found in the literature as in \cite{Ku90}. However, we want to stress
in the following the dependence on the different parameters and outline
more explicitly the constants.

We define now some constants depending on the coefficients $b$ and
$\sigma_k$ of the problem, which we will use in the following results.
For a fixed real number $p\geq1$, we call $C_p$ the constat which
appears in the Burkholder--Davis--Gundy theorem. Moreover, for $t>0$
and $p\geq1$, we define
%
\begin{equation}
\label{first_constant} C(p,t):=C_pT^{{1} /{(2p)}}
L_\sigma+T^{{1}/p}L_b.
\end{equation}
Finally, for a fixed $T>0$, let $n\in\mathbb{N}$ be the minimum such
that $C (p,(T/n) )<1$, so that we can define
%
\begin{equation}
\label{second_constant} C_{p,T}:= \bigl(1-C \bigl(p,(T/n) \bigr)
\bigr)^{-np}.\vadjust{\goodbreak}
\end{equation}
From our choice of $n\in\mathbb{N,}$ this last constant is well defined
and depending only on $T,p$ and the coefficients of problem \eqref{liouville}.

\begin{lemma}\label{limsol}
Let $p\geq1$, $T\geq0$ and let $X(t,x)$ be a solution of equation
\eqref{liouville} up to time $T$. Then
%
\begin{equation}
\label{prekol} \E \Bigl[\sup_{t\in[0,T]}\bigl\llvert X(t,x)-X
\bigl(t,x'\bigr)\bigr\rrvert ^p \big|\mathcal
{F}_0 \Bigr]\leq C_{p,T}\bigl|x-x'\bigr|^p,
\end{equation}
where the constant $C_{p,T}$ is defined in \eqref{second_constant}.
\end{lemma}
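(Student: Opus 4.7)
\textbf{Proof plan for Lemma \ref{limsol}.} The strategy is standard for Lipschitz flow estimates of SDEs: obtain a self-bounding contraction estimate on a short time interval on which the drift and diffusion coefficients cannot amplify the difference of two solutions, and then iterate $n$ times across equal subintervals of $[0,T]$ by conditioning. Set $Y_t := X(t,x) - X(t,x')$. Subtracting the equation \eqref{liouville} for the two initial data gives the integral identity
\begin{equation*}
Y_t = (x-x') + \int_0^t \bigl[b(s,X(s,x)) - b(s,X(s,x'))\bigr]\,ds + \sum_{k=1}^\infty \int_0^t \bigl[\sigma_k(X(s,x)) - \sigma_k(X(s,x'))\bigr]\,dB_s^k.
\end{equation*}

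Fix $\tau \in (0,T]$. I would bound each of the three contributions to $\sup_{t\leq\tau}|Y_t|^p$ separately: for the drift, the Lipschitz assumption on $b$ (with deterministic constant $L_b$) combined with Jensen's inequality yields a factor proportional to $L_b^{\,p}\tau^{p}$ multiplying the running supremum; for the martingale term, the Burkholder-Davis-Gundy inequality reduces the estimate to the bracket $\sum_{k}\int_0^t|\sigma_k(X(s,x))-\sigma_k(X(s,x'))|^2\,ds$, which by the global Lipschitz-type bound \eqref{Lipschitz property} is at most $L_\sigma^2\int_0^t|Y_s|^2\,ds$, giving a contribution proportional to $C_p L_\sigma^{\,p}\tau^{p/2}$ times the running supremum. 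Because $L_b,L_\sigma$ and $C_p$ are deterministic, everything survives conditioning on $\mathcal{F}_0$, and one obtains a self-bounding inequality
\begin{equation*}
\E\bigl[\sup_{t\leq\tau}|Y_t|^p\,\big|\,\mathcal{F}_0\bigr] \;\leq\; |x-x'|^p + C(p,\tau)^{\,p}\,\E\bigl[\sup_{t\leq\tau}|Y_t|^p\,\big|\,\mathcal{F}_0\bigr],
\end{equation*}
with $C(p,\tau)$ as in \eqref{first_constant}. Choosing $n$ as prescribed so that $C(p,T/n)<1$, the term on the right can be absorbed and yields
\begin{equation*}
\E\bigl[\sup_{t\leq T/n}|Y_t|^p\,\big|\,\mathcal{F}_0\bigr] \;\leq\; (1-C(p,T/n))^{-p}\,|x-x'|^p.
\end{equation*}

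For the iteration step, partition $[0,T]$ into $n$ subintervals of length $\tau=T/n$ and apply exactly the same local estimate on $[k\tau,(k+1)\tau]$, conditionally on $\mathcal{F}_{k\tau}$; this is legitimate because the process $(X(t,x))_{t\geq k\tau}$ solves the same SDE with initial datum $X(k\tau,x)$, which is $\mathcal{F}_{k\tau}$-measurable, driven by the Brownian increments $B^k_t-B^k_{k\tau}$. The outcome is the recursion
\begin{equation*}
\E\bigl[\sup_{t\in[k\tau,(k+1)\tau]}|Y_t|^p\,\big|\,\mathcal{F}_{k\tau}\bigr] \;\leq\; (1-C(p,T/n))^{-p}\,|Y_{k\tau}|^p.
\end{equation*}
Chaining these $n$ inequalities via the tower property of conditional expectation compounds the constant and produces exactly the bound \eqref{prekol} with $C_{p,T}=(1-C(p,T/n))^{-np}$ as defined in \eqref{second_constant}.

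The main technical delicacy is the local estimate itself: I need to apply BDG to the infinite sum of stochastic integrals, which requires interchanging sum and expectation on the nonnegative bracket process (monotone convergence, together with the uniform bound $\sum_k|\sigma_k(x)|^2\leq d$ supplied by Hypothesis \ref{noise}) and to verify that $\sup_{t\leq\tau}|Y_t|^p$ is a priori integrable so that the absorption step is not vacuous. The standard workaround is to truncate by a stopping time $\tau_R=\inf\{t:|X(t,x)|+|X(t,x')|>R\}$, derive the estimate for $Y_{t\wedge\tau_R}$ (which is bounded and hence has finite $p$-th moment), and pass to the limit $R\to\infty$ using Fatou; the global moment bounds for the flow coming from the linear growth of $b$ and the boundedness of $\sum_k|\sigma_k|^2$ guarantee $\tau_R\to\infty$ almost surely. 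Once this is settled, the iteration and the final constant are purely bookkeeping.
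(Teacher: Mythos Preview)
Your proposal is correct and follows essentially the same route as the paper: a short-time self-bounding estimate via Lipschitz bounds on $b$, the conditional BDG inequality on the martingale term, and the Lipschitz-type control \eqref{Lipschitz property} on $\sum_k|\sigma_k(x)-\sigma_k(y)|^2$, followed by iteration over $n$ subintervals to build the constant $C_{p,T}$. Two small remarks: (i) the displayed self-bounding inequality should be written at the level of the conditional $L^p$-norm via Minkowski, i.e.\ $\E[\sup_{t\le\tau}|Y_t|^p\mid\mathcal{F}_0]^{1/p}\le |x-x'|+C(p,\tau)\,\E[\sup_{t\le\tau}|Y_t|^p\mid\mathcal{F}_0]^{1/p}$, rather than with raw $p$-th powers---your subsequent absorbed bound $(1-C(p,T/n))^{-p}|x-x'|^p$ is the correct consequence of this Minkowski version, not of the inequality you displayed; (ii) your iteration conditions on $\mathcal{F}_{k\tau}$ and chains by the tower property, whereas the paper conditions on $\mathcal{F}_0$ throughout the induction---these are equivalent, and your additional care about a priori finiteness via a localization by stopping times is in fact more thorough than the paper's write-up.
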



%
\begin{pf}
Let $n\in\N$ be the minimum such that $C (p,(T/n) )<1$, where
$C(\cdot,\cdot)$ is defined in \eqref{first_constant}.
Now we divide the temporal interval $[0,T]$ in $n$ subintervals. We set
$X^{(0)}(t,x)=x$ and we call $X^{(m)}$, for $m=1,\ldots,n$, the solution to
\begin{eqnarray*} d X_t &= &b(t,X_t) \,d t +
\sum_k^\infty\sigma_k(X_t)
\,d B_t^k,
\\
X_{({(m-1)}/n) T}&=&X^{(m-1)}\biggl(\frac{m-1}{n}T,x\biggr)
\end{eqnarray*}
on the interval $[\frac{m-1}nT,\frac{m}n T]$.
We prove by induction that, for every $m=1,\ldots,n$,
%
\begin{eqnarray}
\label{induzione}&& E \Bigl[\sup_{t\in[({(m-1)}/{n})T,({m}/{n})T]}\bigl\llvert X^{(m)}
(t,x )-X^{(m)} \bigl(t,x' \bigr)\bigr\rrvert
^p \big|\mathcal {F}_0 \Bigr]^{{1}/p}
\nonumber
\\[-8pt]
\\[-8pt]
\nonumber
&&\qquad\leq
\frac{|x-x'|}{ (1-C (p,(T/n) ) )^{m}}.
\end{eqnarray}
It follows from the uniqueness of solution of the stochastic
differential equations that the solution $X_t$ of equation \eqref
{liouville} coincides on each interval $[\frac{m-1}nT,\break \frac{m}n T]$
with the process $X_t^{(m)}$. The thesis follows noting that the worst
constant in \eqref{induzione} appears when $m=n$ and it coincides with
$C_{p,t}$.
%

\textit{Step} 1. Now we prove \eqref{induzione} for $m=1$. By a
triangular inequality, we get
\begin{eqnarray*}
&&\E \Bigl[\sup_{t\in[0,(T/n)]}\bigl\llvert X^{(1)}(t,x)-X^{(1)}
\bigl(t,x'\bigr)\bigr\rrvert ^p \big|\mathcal{F}_0
\Bigr]^{{1}/p}\\
&&\qquad\leq\bigl|x-x'\bigr|
\\
&&\qquad\quad{}+\E \biggl[\sup_{t\in[0,(T/n)]}\biggl\llvert \int_0^tb
\bigl(s,X^{(1)}(s,x)\bigr)-b\bigl(s,X^{(1)}\bigl(s,x'
\bigr)\bigr) \,d s\biggr\rrvert ^p \Big|\mathcal {F}_0
\biggr]^{{1}/p}
\\
&&\qquad\quad{}+\E \biggl[\sup_{t\in[0,(T/n)]}\biggl\llvert \int_0^t
\sum_k\sigma _k\bigl(X^{(1)}(s,x)
\bigr)-\sigma_k\bigl(X^{(1)}\bigl(s,x'\bigr)
\bigr) \,d B_s^k\biggr\rrvert ^p \Big|\mathcal
{F}_0 \biggr]^{{1}/p}.
\end{eqnarray*}
In order to estimate this, we first notice that, by the Lipschitz
continuity of $b$ one can get
\begin{eqnarray*}
&&\E \biggl[\sup_{t\in[0,(T/n)]}\biggl\llvert \int_0^tb
\bigl(s,X^{(1)}(s,x)\bigr)-b\bigl(s,X^{(1)}\bigl(s,x'
\bigr)\bigr) \,d s\biggr\rrvert ^p \Big|\mathcal {F}_0
\biggr]^{{1}/p}\\
&&\qquad\leq \bigl({(T/n)} \bigr)^{{1}/p}L_b
\E \Bigl[\sup_{t\in[0,(T/n)]}\bigl\llvert X^{(1)}(t,x)-X^{(1)}
\bigl(t,x'\bigr)\bigr\rrvert ^p \big|\mathcal
{F}_0 \Bigr]^{{1}/p}.
\end{eqnarray*}
Now, using the conditional Burkholder--Davis--Gundy inequality
(Proposition~\ref{CBDG}), we obtain
\begin{eqnarray*}
&&\E \biggl[\sup_{t\in[0,(T/n)]}\biggl\llvert \int_0^t
\sum_k\sigma _k\bigl(X^{(1)}(s,x)
\bigr)-\sigma_k\bigl(X^{(1)}\bigl(s,x'\bigr)
\bigr) \,d B_s^k\biggr\rrvert ^p \Big|\mathcal
{F}_0 \biggr]^{{1}/p}
\\
&&\qquad\leq C_p \E \biggl[ \biggl(\int_0^{(T/n)}
\sum_k\bigl\llvert \sigma _k
\bigl(X^{(1)}(s,x)\bigr)-\sigma_k\bigl(X^{(1)}
\bigl(s,x'\bigr)\bigr)\bigr\rrvert ^2 \,d s
\biggr)^{{p}/2}\Big |\mathcal{F}_0 \biggr]^{{1}/p}
\\
&&\qquad\leq C_p \bigl({(T/n)} \bigr)^{{1}/ {(2p)}}
L_\sigma\E \Bigl[\sup_{t\in
[0,(T/n)]}\bigl\llvert
X^{(1)}(t,x)-X^{(1)}\bigl(t,x'\bigr)\bigr\rrvert
^p\big |\mathcal{F}_0 \Bigr]^{{1}/p}.
\end{eqnarray*}
We have hence proved the base step of the induction.

\textit{Step} 2. Now we suppose \eqref{induzione} true for $m$ and we
prove it for $m+1$. First, thanks to a triangular inequality we obtain
\begin{eqnarray*}
&&\E \Bigl[\sup_{t\in[({m}/{n})T,({(m+1)}/{n})T]}\bigl\llvert X^{(m+1)}(t,x)-X^{(m+1)}
\bigl(t,x'\bigr)\bigr\rrvert ^p \big|\mathcal{F}_0
\Bigr]^{{1}/p}
\\
&&\qquad\leq\E \biggl[\biggl\llvert X^{(m)} \biggl(\frac{m}{n}T,x
\biggr)-X^{(m)} \biggl(\frac{m}{n}T,x' \biggr)\biggr
\rrvert ^p \Big|\mathcal{F}_0 \biggr]^{{1}/p}
\\
&&\qquad\quad{}+\E \biggl[\sup_{t\in[({m}/{n})T,({(m+1)}/{n})T]}\biggl\llvert
\int_{({m}/{n})T}^tb
\bigl(s,X^{(m+1)}(s,x)\bigr)\\
&&\qquad\quad{}-b\bigl(s,X^{(m+1)}\bigl(s,x'
\bigr)\bigr) \,d s\biggr\rrvert ^p \Big|\mathcal{F}_0
\biggr]^{{1}/p}
\\
&&\qquad\quad{}+\E \biggl[\sup_{t\in[({m}/{n})T,({(m+1)}/{n})T]}\biggl\llvert
\int_{({m}/{n})T}^t
\sum_k\sigma_k\bigl(X^{(m+1)}(s,x)
\bigr)\\
&&\qquad\quad{}-\sigma_k\bigl(X^{(m+1)}\bigl(s,x'\bigr)
\bigr) \,d B_s^k\biggr\rrvert ^p \Big|
\mathcal{F}_0 \biggr]^{{1}/p.}
\end{eqnarray*}
Now, as in step 1, we use the Lipschitz property of $b$ and $\sigma_k$
and Lemma~\ref{CBDG} to get
\begin{eqnarray*}
&&\E \Bigl[\sup_{t\in[({m}/{n})T,({(m+1)}/{n})T]}\bigl\llvert
 X^{(m+1)}(t,x)-X^{(m+1)}
\bigl(t,x'\bigr)\bigr\rrvert ^p\big |\mathcal{F}_0
\Bigr]^{{1/}p}\\
&&\qquad\leq\frac{\E [\llvert X^{(m)} (({m}/{n})T,x
)-X^{(m)} (({m}/{n})T,x' )\rrvert ^p |\mathcal
{F}_0
]^{{1}/p}}{ (1-C (p,(T/n) ) )^p}.
\end{eqnarray*}
Now estimate the right-hand side using \eqref{induzione} for $m$, and
we conclude this last step,
\begin{eqnarray*}
&&\E \Bigl[\sup_{t\in[({m}/{n})T,({(m+1)}/{n})T]}\bigl\llvert X^{(m+1)}(t,x)-X^{(m+1)}
\bigl(t,x'\bigr)\bigr\rrvert ^p \big|\mathcal{F}_0
\Bigr]^{{1}/p}\\
&&\qquad\leq\frac{|x-x'|}{ (1-C (p,(T/n) )
)^{m}}\frac{1}{ (1-C (p,(T/n) ) )}.
\end{eqnarray*}
\upqed\end{pf}

Using the continuous version in $x$ of the solution of equation \eqref
{liouville}, we are going to define a solution for equation \eqref{SPDE
Ito form} in the case in which the drift coefficient is fixed. This is
shown in the following proposition. The push forward described in the
next statement has to be understood $\omega$-wise: for a.e. $\omega$
and for each $t\in[0,T]$, we take the initial measure $\mu_0(\omega
)=\mu
_0(\omega, d x)$ and we consider its image measure (or push forward)
under the continuous map $x\mapsto X(t,x,\omega)$, denoted by $\mu
_t(\omega)$ or $\mu_t(\omega, d x)$.

\begin{proposition}\label{propuno}
Given $\mu_0$ which satisfies Hypotesis \ref{Initial Condition}, the
push forward of $\mu_0$ with respect to the solution of \eqref
{liouville} namely
\[
\label{lsol} \mu_t(\omega)=X(t,\cdot,\omega)_\#\mu_0(
\omega)
\]
solves the following equation in the sense of Definition~\ref{def1}:
\[
\label{wliouville} \cases{ %
\displaystyle d \mu_t= -
\operatorname{div} (b \mu_t ) \,d t - \sum_{k=0}^\infty
\operatorname {div} (\sigma_k \mu_t ) \,d B_t^k+
\frac{1}2\Delta\mu_t,
\vspace*{2pt}\cr
\mu_{t|_{t=0}}=\mu_0. }
\]
\end{proposition}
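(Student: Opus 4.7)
The plan is to verify each item of Definition \ref{def1} directly by transporting the SDE satisfied by $X(t,x)$ through $\mu_0$. The starting point is It\^o's formula applied pointwise in $x$: for $\phi\in C_b^2(\R^d)$ and each fixed $x\in\R^d$,
\begin{align*}
\phi(X(t,x))=\phi(x)&+\int_0^t \nabla\phi(X(s,x))\cdot b(s,X(s,x))\,ds\\
&+\tfrac12\int_0^t\sum_{\alpha,\beta=1}^d Q^{\alpha\beta}(0)\,\partial_\alpha\partial_\beta\phi(X(s,x))\,ds\\
&+\sum_{k=1}^\infty\int_0^t \nabla\phi(X(s,x))\cdot\sigma_k(X(s,x))\,dB_s^k,
\end{align*}
and under Hypothesis \ref{noise} the second line reduces to $\tfrac12\int_0^t\Delta\phi(X(s,x))\,ds$. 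By definition of push-forward, $\langle\mu_t,\phi\rangle=\int_{\R^d}\phi(X(t,x))\,\mu_0(dx)$, so the goal is to integrate the above identity against $\mu_0(dx)$ and interchange $\int\mu_0(dx)$ with the time and stochastic integrals.

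First I would check that $(\mu_t)_{t\in[0,T]}\in\mathcal{S}$. The joint continuity of $X(t,\cdot,\omega)$ (recalled before the statement, following from Lemma~\ref{limsol} and Kolmogorov) makes $\mu_t(\omega)$ a well-defined Borel probability measure for a.e.\ $\omega$. The integrability condition $\E[\sup_t\int|x|\,d\mu_t(x)]<\infty$ follows from a standard moment estimate for the SDE \eqref{liouville}: the linear growth of $b$ and the identity $\sum_k|\sigma_k(x)|^2=d$ yield $\E[\sup_{t\le T}|X(t,x)|]\le C_T(1+|x|+\E[|c_2|])$, and then Fubini together with Hypothesis~\ref{Initial Condition}(ii) finishes this point. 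Adaptedness of $\langle\mu_t,\phi\rangle$ in the sense required by $\mathcal{S}$ and item (i) of Definition~\ref{def1} follows from the $\mathcal{F}_t$-adaptedness of $x\mapsto X(t,x,\omega)$, combined with the fact that $\mu_0$ is $\mathcal{F}_0$-measurable and hence independent pairing is compatible with the filtration.

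Next, I would integrate the It\^o expansion against $\mu_0(dx)$. For the Lebesgue-in-time terms the exchange $\int\mu_0(dx)\int_0^t\cdots ds=\int_0^t\int\mu_0(dx)\cdots\,ds$ is just classical Fubini, and gives $\int_0^t\langle\mu_s,b_s\cdot\nabla\phi\rangle\,ds+\tfrac12\int_0^t\langle\mu_s,\Delta\phi\rangle\,ds$ (here I write $b_s(\cdot)=b(s,\cdot,\omega)$; in the application to \eqref{SPDE Ito form} this will be $b_{\mu_s}$). The delicate step is the stochastic Fubini exchange
\[
\int_{\R^d}\sum_{k=1}^\infty\int_0^t\nabla\phi(X(s,x))\cdot\sigma_k(X(s,x))\,dB_s^k\,\mu_0(dx)=\sum_{k=1}^\infty\int_0^t\langle\mu_s,\sigma_k\cdot\nabla\phi\rangle\,dB_s^k.
\]
The hard part will be this identity: one has to verify the integrability hypothesis of a stochastic Fubini theorem for the countable family of martingales, namely
\[
\E\!\left[\int_0^T\!\!\int_{\R^d}\sum_{k=1}^\infty|\nabla\phi(X(s,x))\cdot\sigma_k(X(s,x))|^2\,\mu_0(dx)\,ds\right]<\infty,
\]
which follows from $\|\nabla\phi\|_\infty<\infty$, the bound $\sum_k|\sigma_k(y)|^2=d$, and the fact that $\mu_0$ is a probability measure, independent of the Brownian filtration thanks to Hypothesis~\ref{Initial Condition}(i). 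With this uniform bound one applies a standard stochastic Fubini result (e.g.\ Da~Prato--Zabczyk or Veraar) to each finite partial sum and passes to the limit in $k$ using It\^o's isometry and the same dominating bound.

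Collecting these identities one obtains precisely
\[
\langle\mu_t,\phi\rangle=\langle\mu_0,\phi\rangle+\int_0^t\langle\mu_s,b_s\cdot\nabla\phi\rangle\,ds+\tfrac12\int_0^t\langle\mu_s,\Delta\phi\rangle\,ds+\sum_{k=1}^\infty\int_0^t\langle\mu_s,\sigma_k\cdot\nabla\phi\rangle\,dB_s^k
\]
for every $\phi\in C_b^2(\R^d)$, which is Definition~\ref{def1}(ii) for the Liouville equation. Finally, the continuity in $t$ of the right-hand side (the drift and diffusion integrals are continuous, the stochastic integral admits a continuous modification) provides the continuous version required by Definition~\ref{def1}(i), completing the proof.
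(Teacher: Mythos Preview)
Your proposal is correct and follows essentially the same route as the paper: apply It\^o's formula to $\phi(X(t,x))$, integrate against the random measure $\mu_0$, and use stochastic Fubini to interchange the $\mu_0$-integral with the stochastic integral. The only difference is one of emphasis: the paper obtains the first-moment bound via a Gronwall argument conditioned on $\mathcal{F}_0$ and is explicit that exchanging $\E[\cdot|\mathcal{F}_0]$ with $\int\mu_0(dx)$ (Proposition~\ref{ScambioCondizionale}) requires a continuous modification of the random field $x\mapsto\int_0^t\sum_k\sigma_k(X(s,x))\,dB_s^k$, which it verifies via Lemma~\ref{limsol} and Kolmogorov; you reach the same moment bound by a direct estimate on $\E[\sup_t|X(t,x)|]$ and ordinary Fubini, which is also fine once one remembers that $\mu_0$ is $\mathcal{F}_0$-measurable.
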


\begin{pf}
First, notice that $\mu\in\mathcal{S}$. By definition, for every
$t\in
[0,T]$ and $\mathbb{P}$-a.s., $\mu_t$ is a finite and positive measure.
We show that the first moment of $\mu_t$ is finite,
%
\begin{eqnarray}
 \E \biggl[\int_{\R^d}|x| \,d \mu_t(x)
\biggr] &= &\E \biggl[\int_{\R
^d}|X_t| \,d
\mu_0(x) \biggr]
\nonumber
\\
&\leq& \E \biggl[\int_{\R^d}|x| \,d \mu_0(x)
\biggr]\label{25}
\\
&&{}+ \E \biggl[\int_{\R^d}\int_0^t\bigl|b
\bigl(X(s,x)\bigr)\bigr| \,d s\, d \mu_0(x) \biggr]\label
{26}
\\
&&{}+ \E \Biggl[ \int_{\R^d}\Biggl\llvert \int
_0^t\sum_k^\infty
\sigma _k\bigl(X(s,x)\bigr) \,d B_s^k\Biggr
\rrvert \,d\mu_0 (x) \Biggr].\label{27}
\end{eqnarray}
It follows from the choice of $\mu_0$ that \eqref{25} is finite. We can
bound \eqref{26} if we notice that the Lipschitz continuity assumption
on $b$ implies $|b(x)|\leq1+|x|$, which gives
%
\begin{equation}
\label{momento_finito_1} \E \biggl[\int_{\R^d}\int_0^t\bigl|b
\bigl(X(s,x)\bigr)\bigr| \,d s\, d \mu_0(x) \biggr]\leq CT+C\int
_0^t\E \biggl[\int_{\R^d}|x|
\,d \mu_s(x) \biggr] \,ds.
\end{equation}
In order to bound \eqref{27}, we use Propositions \ref
{ScambioCondizionale} and \ref{CBDG} and we do the following:
%
\begin{eqnarray}
\label{momento_finito_2} &&\E \Biggl[\E \Biggl[ \int_{\R^d}\Biggl\llvert
\int_0^t\sum_{k=0}^\infty
\sigma _k\bigl(X(s,x)\bigr) \,d B_s^k\Biggr
\rrvert \,d\mu_0(x) \Big|\mathcal{F}_0 \Biggr] \Biggr]\nonumber\\
&&\qquad=\E
\Biggl[ \int_{\R^d}\E \Biggl[\Biggl\llvert \int
_0^t\sum_{k=0}^\infty
\sigma _k\bigl(X(s,x)\bigr) \,d B_s^k\Biggr
\rrvert\Big |\mathcal{F}_0 \Biggr] \,d\mu_0 (x) \Biggr]
\nonumber
\\[-8pt]
\\[-8pt]
\nonumber
&&\qquad\leq C\E \Biggl[ \int_{\R^d}\E \Biggl[\int
_0^t\sum_{k=0}^\infty
\bigl|\sigma _k\bigl(X(s,x)\bigr)\bigr|^2 \,d s\Big |
\mathcal{F}_0 \Biggr]^{{1}/2} \,d\mu_0 (x)
\Biggr]
\\
&&\qquad \leq C\sqrt{T}.\nonumber
\end{eqnarray}
Here, we used $\sum_{k=0}^\infty\llvert \sigma_k (X(s,x)
)\rrvert ^2<+\infty$. Taking into account \eqref{momento_finito_1} and
\eqref
{momento_finito_2}, we can apply the Gronwall lemma to deduce that the
first moment of $\mu_t$ is finite for every $t$.
Let us stress a detail. In order to apply Proposition~\ref
{ScambioCondizionale} of the \hyperref[app]{Appendix}, we
need to know that the random field ($t$ here is fixed)
\[
f ( x ) =\int_{0}^{t}\sum
_{k=0}^{\infty}\sigma_{k} \bigl( X ( s,x )
\bigr) \,dB_{s}^{k}%
\]
is continuous, or it has a continuous modification. This is true
because by
the BDG inequality,
\begin{eqnarray*}
E \bigl[ \bigl\llvert f ( x ) -f ( y ) \bigr\rrvert ^{p} \bigr] & =&E
\Biggl[ \Biggl\llvert \int_{0}^{t}\sum
_{k=0}^{\infty
} \bigl( \sigma_{k} \bigl( X (
s,x ) \bigr) -\sigma_{k} \bigl( X ( s,y ) \bigr) \bigr)
\,dB_{s}^{k}\Biggr\rrvert ^{p} \Biggr]
\\
& \leq& C_{p}E \Biggl[ \Biggl( \int_{0}^{t}
\sum_{k=0}^{\infty}\bigl\llvert
\sigma_{k} \bigl( X ( s,x ) \bigr) -\sigma_{k} \bigl( X (
s,y ) \bigr) \bigr\rrvert ^{2}\,ds \Biggr) ^{p/2} \Biggr]
\\
& \leq& C_{p}L_{\sigma}^{p}E \biggl[ \biggl( \int
_{0}^{t}\bigl\llvert X ( s,x ) -X ( s,y ) \bigr
\rrvert ^{2}\,ds \biggr) ^{p/2} \biggr]
\\
& \leq& C_{p,T}C_{p}L_{\sigma}^{p}T\llvert
x-y\rrvert ^{p}.%
\end{eqnarray*}
This last inequality follows from Lemma~\ref{limsol}. Thus, for $p>d$
we may apply Kolmogorov regularity theorem and
deduce that $f$ has a continuous version.

We show now that $\mu_t$ satisfies the conditions of Definition~\ref{def1}:
\begin{longlist}[(ii)]
\item[(i)] to prove that $ \langle\mu_{t},\phi \rangle$ is
continuous and
adapted, it is sufficient to notice that
\[
\langle\mu_{t},\phi \rangle=\int_{\mathbb{R}^{d}}\phi \bigl(
X ( t,x ) \bigr) \mu_{0} ( dx ).
\]

\item[(ii)] Let $\phi\in C_b^2(\R^d)$, we apply It\^{o}'s formula
\[
d \phi(X_t) = \nabla\phi(X_t)\cdot d X_t +
\frac{1}2\sum_k^\infty \sum
_{i,j=1}^d\partial^2_{i,j}
\phi(X_t)\sigma_k^i(X_t)
\sigma_k^j(X_t) \,d t.
\]
Under the homogeneity assumption over $\sigma_k$, we obtain the following:
\[
d \phi(X_t) = \biggl[\nabla\phi(X_t)\cdot
b(X_t) + \frac{1}2\Delta \phi (X_t) \biggr] \,d t +
\sum^\infty_k\nabla\phi(X_t)
\sigma_k(X_t) \,d B_t^k.
\]
Integrating now over $\mu_0$, we get
\[
d \langle\mu_t,\phi\rangle= \biggl[\langle\mu_t,\nabla
\phi\cdot b\rangle+ \frac{1}2\langle\mu_t,\Delta\phi\rangle
\biggr] \,d t + \sum^\infty_k\int
_{\R^d}\nabla\phi(X_t)\sigma_k(X_t)
\,d B_t^k\, d \mu_0.
\]
Using the stochastic Fubini's theorem, we interchange the stochastic
integral and the integral in $\mu_0$ and we obtain the desired equation.\quad\qed
\end{longlist}
%
\noqed\end{pf}

\subsection{The contraction mapping}\label{contraction_subsection}
In this section, we will construct a solution of equation \eqref{SPDE
Ito form} by means of a fixed-point argument. Given $\mu_0:\Omega\to
\mathcal{P}_1(\R^d)$ as in Hypothesis \ref{Initial Condition}, we
define now an operator $\Phi_{\mu_0}:\mathcal{S}\to\mathcal{S}$. In
Theorem~\ref{Thm 17}, we prove that it is a contraction and we see that
his unique fixed point is a solution to \eqref{SPDE Ito form}.

Let $\mu=(\mu_t)_{t\in[0,T]}\in\mathcal{S}$. We define the following
as the convolution between $\mu_t$ and $K$:
\[
b_\mu(t,x,\omega):=\int_{\R^d}K(x-y)
\mu_t(\omega, d y).
\]
Notice that $b_\mu(t,\cdot,\omega)$ is Lipschitz continuous with Lipschitz
constant $L_K$, which is the Lipschitz constant of $K$ and does not
depend on $t$ and $\omega$. Moreover, since $\llvert  K (
x
) \rrvert \leq L_{K} (
K ( 0 ) +\llvert  x\rrvert  ) $,
\begin{eqnarray*}
\bigl\llvert b_{\mu} ( t,0,\omega ) \bigr\rrvert & \leq &\int
_{\mathbb{R}^{d}}\bigl\llvert K ( -y ) \bigr\rrvert \mu _{t} (
\omega,dy ) \leq L_{K}\int_{\mathbb{R}^{d}} \bigl( K ( 0 ) +
\llvert y\rrvert \bigr) \mu_{t} ( \omega,dy )
\\
& \leq& L_{K}K ( 0 ) +L_{K}\int_{\mathbb{R}^{d}}
\llvert x\rrvert \mu_{t} ( \omega,dx )
\end{eqnarray*}
and the random variable $\int_{\mathbb{R}^{d}}\llvert  x\rrvert
\mu
_{t} ( \omega,dx ) $ is integrable. Hence, $b_{\mu}$
satisfies the
assumptions required in Section~\ref{Well_SPDE} to have strong existence and
uniqueness of solutions.
Let now $X^\mu_t$ be the solution to equation \eqref{liouville} with
drift coefficient $b_\mu$, namely
%
\begin{eqnarray}
\label{strong} %
d X_t &=&
b_\mu(X_t) \,d t + \sum_k
\sigma_k(X_t) \,d B_t^k,
\nonumber
\\[-8pt]
\\[-8pt]
\nonumber
X_0&=&x.
\end{eqnarray}
Let $X^\mu(t,x,\omega)$ be a modification of $X^\mu_t$ continuous in
$x$. We define, for every $t$,
%
\begin{equation}
\label{contr} (\Phi_{\mu_0} \mu)_t(\omega):=X^\mu(t,\cdot,
\omega)_\#\mu_0(\omega), \qquad\omega\mbox{-a.s.}
\end{equation}

\begin{remark}
Notice that the range of $\Phi_{\mu_0}$ is included in $\mathcal
{S}_{\mu
_0}$ and that $\Phi_{\mu_0}\mu$ is a solution of equation \eqref{SPDE
Ito form} in the sense of Definition~\ref{def1}, thanks to Proposition~\ref{propuno}.
\end{remark}

From Lemma~\ref{existence} and Proposition~\ref{propuno}, we deduce the
following theorem, which is the main result of this section.

\begin{theorem}\label{Thm 17}
Given $T>0$, the operator $\Phi_{\mu_0}$ has a unique fixed point
$\mu
=\{\mu_t\}_{t\in[0,T]}$ in $\mathcal{S}_{\mu_0}$. This fixed point
is a
solution of equation \eqref{SPDE Ito form}.
%
%
\end{theorem}
\begin{pf}
From Lemma~\ref{existence}, we have
\[
d_{\mathcal{S}}(\Phi_{\mu_0} \mu,\Phi_{\mu_0} \nu)\leq
\gamma_T d_{\mathcal{S}}(\mu,\nu)\qquad \forall \mu,\nu\in\mathcal{S},
\]
where $\gamma_T$ is defined in \eqref{gamma} as $\gamma
_T:=L_KTC_{1,T}$. Hence, there exists a time $t^*$ up to which the
operator $\Phi_{\mu_0}$ is a contraction, thus it has a unique fixed
point $\mu=(\mu_t)_{t\in[0,t^*]}$. It follows from Proposition~\ref
{propuno} that $\mu$ is a solution, in the sense of Definition~\ref
{def1}, to equation \eqref{SPDE Ito form} on the interval $[0,t^*]$,
starting from $\mu_0$. We can repeat this method on the interval
$[t^*,2t^*]$ with initial condition $\mu_{t^*}$, and iterate it up to
any finite time $T$ because $t^*$ depends only on the Lipschitz
constants of the coefficients, and not on the initial condition. In
this way, we have shown that we can construct a solution $\mu$ on the
interval $[0,T]$ which is a fixed point for the operator $\Phi_{\mu
_{nt^*}}$ on the interval $[nt^*,(n+1)t^*]$, for every $n\in\N$ such
that $nt^*<T$. Moreover, we can prove that any two fixed points $\mu
,\nu
$ of the map $\Phi_{\mu_0}$ on the interval $[0,T]$ coincide. Indeed,
if $t_0\in[0,T]$ is the largest time such that $\mu=\nu$, one proves
$t_0=T$ by contradiction, by applying the contraction argument on
$[t_0,t_0+\delta]$ for a suitable $\delta>0$, if $t_0<T$.
\end{pf}

\begin{lemma}\label{drift diverso}
Set $T>0$. Let $\mu=\{\mu_t\}_{t\geq0}, \nu=\{\nu_t\}_{t\geq0}
\in
\mathcal{S}$ and let $X^\mu, X^\nu$ be the solutions of equation
\eqref
{strong} with drift coefficients $b_\mu$ and $b_\nu$, respectively. The
following holds true:
\[
\E \Bigl[\sup_{t\in[0,T]}\bigl\llvert X^\mu(t,x)-X^\nu(t,x)
\bigr\rrvert \big|\mathcal {F}_0 \Bigr] \leq\gamma_T\E
\Bigl[\sup_{t\in[0,T]}W_1(\mu_t,\nu
_t) \big|\mathcal{F}_0 \Bigr],
\]
where
%
\begin{equation}
\label{gamma} \gamma_T:=L_KTC_{1,T}.
\end{equation}
The constant $C_{1,T}$ is defined in \eqref{second_constant}.
\end{lemma}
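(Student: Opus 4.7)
The plan is to mimic the subdivision scheme of Lemma \ref{limsol}, with the Wasserstein distance playing the role of the initial separation $|x-x'|$ there. First I would write $Y_t := X^\mu(t,x) - X^\nu(t,x)$, which satisfies $Y_0 = 0$ and, by \eqref{strong},
\[
Y_t = \int_0^t \left[b_\mu(s,X^\mu_s) - b_\nu(s,X^\nu_s)\right] ds + \int_0^t \sum_{k=1}^\infty \left[\sigma_k(X^\mu_s) - \sigma_k(X^\nu_s)\right] dB_s^k.
\]
The drift integrand splits as $[b_\mu(s,X^\mu_s) - b_\mu(s,X^\nu_s)] + [b_\mu(s,X^\nu_s) - b_\nu(s,X^\nu_s)]$: the first piece is bounded by $L_K|Y_s|$ since $b_\mu$ is $L_K$-Lipschitz, and the second equals $\int K(X^\nu_s - y)(\mu_s - \nu_s)(dy)$, which is at most $L_K\, W_1(\mu_s,\nu_s)$ by Kantorovich--Rubinstein duality applied to the $L_K$-Lipschitz map $y\mapsto a\cdot K(X^\nu_s - y)$ for each unit vector $a\in\R^d$.

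Next, fix $n\in\N$ minimal such that $C(1,T/n)<1$ (as in \eqref{second_constant}), set $\tau := T/n$ and $I_m := [(m-1)\tau,m\tau]$, and write $D := \E\left[\sup_{s\in[0,T]} W_1(\mu_s,\nu_s)\Big|\mathcal{F}_0\right]$. Combining the drift estimates above with the conditional Burkholder--Davis--Gundy inequality (Proposition \ref{CBDG}) and the Lipschitz bound \eqref{Lipschitz property} on each $I_m$ yields, exactly as in Step 1 of the proof of Lemma \ref{limsol},
\[
\E\left[\sup_{t\in I_m}\left|Y_t-Y_{(m-1)\tau}\right|\Big|\mathcal{F}_0\right] \leq C(1,\tau)\, \E\left[\sup_{t\in I_m}|Y_t|\Big|\mathcal{F}_0\right] + L_K \tau D.
\]

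The main obstacle is the appearance of the sup of $|Y|$ on both sides of this inequality (caused by BDG), which rules out a direct Gronwall argument on $[0,T]$; the subdivision ensures the coefficient $C(1,\tau)$ is strictly less than $1$ and can be absorbed. Setting $H_m := \E\left[\sup_{t\in[0,m\tau]}|Y_t|\Big|\mathcal{F}_0\right]$ and using the elementary bound $H_m \leq H_{m-1} + \E\left[\sup_{t\in I_m}|Y_t-Y_{(m-1)\tau}|\Big|\mathcal{F}_0\right]$ together with $\sup_{I_m}|Y_t|\leq H_m$, one obtains the recursion
\[
H_m \leq (1-C(1,\tau))^{-1}\left(H_{m-1} + L_K\tau D\right).
\]
Iterating from $H_0 = 0$ and using $\sum_{j=1}^n (1-C(1,\tau))^{-j} \leq n(1-C(1,\tau))^{-n}$ gives
\[
H_n \leq L_K\tau D\cdot n(1-C(1,\tau))^{-n} = L_K T\, C_{1,T}\, D = \gamma_T D,
\]
which is the desired estimate.
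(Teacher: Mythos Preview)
Your proof is correct and follows essentially the same approach as the paper: the same splitting of the drift (yielding the Wasserstein term via Kantorovich--Rubinstein duality), the same conditional BDG estimate for the martingale part, and the same subdivision of $[0,T]$ into $n$ pieces so that the BDG coefficient $C(1,\tau)$ can be absorbed. The only difference is bookkeeping: the paper carries out an induction on $\E[\sup_{I_m}|Y_t|\mid\mathcal{F}_0]$ per subinterval, while you track the cumulative quantity $H_m=\E[\sup_{[0,m\tau]}|Y_t|\mid\mathcal{F}_0]$ and derive the recursion $H_m\le(1-C(1,\tau))^{-1}(H_{m-1}+L_K\tau D)$; both routes arrive at the identical geometric sum $\sum_{j=1}^n(1-C(1,\tau))^{-j}$ and hence the same constant $\gamma_T$.
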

\begin{pf}
Given $T>0$, we call $n$ the smallest positive integer such that
$C
(1,(T/n) )<1$ [see \eqref{first_constant}].
We split the interval $[0,T]$ in $n$ subintervals, namely $[\frac
{m-1}{n} T,\frac{m}n T]$, for $m\leq n$. We will give the proof by
induction over $m$.

First, we prove our claim on the interval $[0,(T/n)]$.
We start our estimation by giving bounds for the drift and the noise of
equation \eqref{strong}. It holds, $\mathbb{P}$-a.s.,
%
\begin{eqnarray}
\label{primterm} &&\int_0^t\bigl|b_\mu
\bigl(s,X^\mu(s,x)\bigr)-b_\nu\bigl(s,X^\nu(s,x)
\bigr)\bigr| \,d s \nonumber\\
&&\qquad\leq \int_0^t\bigl|b_\mu
\bigl(s,X^\mu(s,x)\bigr)-b_\mu\bigl(s,X^\nu(s,x)
\bigr)\bigr| \,d s
\\
&&\qquad\quad{}+ \int_0^t\bigl|b_\mu
\bigl(s,X^\nu(s,x)\bigr)-b_\nu\bigl(s,X^\nu(s,x)
\bigr)\bigr| \,d s\nonumber
\\
&&\qquad\leq L_K\int_0^t
\bigl|X^\mu(s,x)-X^\nu(s,x)\bigr| \,d s+ L_K\int
_0^tW_1(\mu _s,\nu
_s) \,d s.\nonumber
\end{eqnarray}
Here, we used that, for every $t\in[0,(T/n)]$, $x\in\R^d$ and
$\mathbb{P}$-a.s.,
%
\begin{equation}
\label{trucco} \bigl|b_\mu(t,x)-b_\nu(t,x)\bigr|\leq L_K
W_1(\mu_t,\nu_t).
\end{equation}
To prove this, we apply first the definition of $b_\mu$:
\begin{eqnarray*}
&&\bigl|b_\mu(t,x)-b_\nu(t,x)\bigr|\\
&&\qquad=\biggl\llvert \int
_{\R^d}K(x-y) \,d \mu_t(y)-\int_{\R
^d}K
\bigl(x-y'\bigr) \,d \nu_t\bigl(y'\bigr)
\biggr\rrvert .
\end{eqnarray*}
Given $\omega\in\Omega$ a.s. and $t\in[0,(T/n)]$ for every $m\in
\Gamma
(\mu_t(\omega),\nu_t(\omega))$ so we can rewrite the right-hand
side as
follows and then apply the Lipshitz continuity of $K$ to obtain, for
$\mathbb{P}$-a.e. $\omega$,
\begin{eqnarray*}
&&\bigl|b_\mu(s,x)-b_\nu(s,x)\bigr|\\
&&\qquad=\biggl\llvert \int
_{\R^d\times\R^d}K(x-y) \,d m\bigl(y,y'\bigr)-\int
_{\R^d\times\R^d}K\bigl(x-y'\bigr) \,d m\bigl(y,y'
\bigr)\biggr\rrvert
\\
&&\qquad\leq\int_{\R^d\times\R^d} \bigl|K(x-y)-K\bigl(x-y'\bigr)\bigr| \,d m
\bigl(y,y'\bigr)
\\
&&\qquad \leq L_K \int_{\R^d\times\R^d} \bigl|y-y'\bigr| \,d m
\bigl(y,y'\bigr).
\end{eqnarray*}
Now \eqref{trucco} follows since $m$ is arbitrary.

Using the conditional Burkholder--Davis--Gundy inequality (see
Proposition~\ref{CBDG}) and the Lipschitz continuity of the noise, we
can estimate the following:
%
\begin{eqnarray}
\label{secterm}&& \E \biggl[\sup_{t\in[0,(T/n)]}\biggl\llvert \int
_0^t\sum_k
\sigma_k\bigl(X^\mu (s,x)\bigr)-\sigma_k
\bigl(X^\nu(s,x)\bigr) \,d B_s^k\biggr\rrvert \Big|
\mathcal{F}_0 \biggr]
\nonumber
\\
&&\qquad\leq C_1\E \biggl[ \biggl(\int_0^{(T/n)}
\sum_k \bigl(\sigma_k
\bigl(X^\mu (s,x)\bigr)-\sigma_k\bigl(X^\nu(s,x)
\bigr) \bigr)^2 \,d s \biggr)^{{1}/2} \Big|\mathcal
{F}_0 \biggr]
\nonumber
\\
&&\qquad\leq C_1L_\sigma\E \biggl[ \biggl(\int
_0^{(T/n)}\bigl\llvert X^\mu
(t,x)-X^\nu (t,x)\bigr\rrvert ^2 \,d t \biggr)^{{1}/2}
\Big|\mathcal{F}_0 \biggr]
\\
&&\qquad\leq C_1(T/n)^{{1}/2}L_\sigma\E \Bigl[ \Bigl(\sup
_{t\in
[0,(T/n)]}\bigl|X^\mu (t,x)-X^\nu(t,x)\bigr|^2
\Bigr)^{{1}/2} \big|\mathcal{F}_0 \Bigr]
\nonumber
\\
&&\qquad\leq C_1(T/n)^{{1}/2}L_\sigma\E \Bigl[\sup
_{t\in[0,(T/n)]}\bigl\llvert X^\mu (t,x)-X^\nu(t,x)
\bigr\rrvert \big|\mathcal{F}_0 \Bigr].\nonumber
\end{eqnarray}

We now use \eqref{primterm} and \eqref{secterm} to estimate the following:
\begin{eqnarray*}
&&\E \Bigl[\sup_{t\in[0,(T/n)]}\bigl\llvert X^\mu(t,x)-X^\nu(t,x)
\bigr\rrvert \big|\mathcal{F}_0 \Bigr] \\
&&\qquad\leq \E \biggl[\sup
_{t\in[0,(T/n)]}\int_0^t\bigl|b_\mu
\bigl(s,X^\mu(t,x)\bigr)-b_\nu\bigl(s,X^\nu(t,x)
\bigr)\bigr| \,d s \Big|\mathcal{F}_0 \biggr]
\nonumber
\\
&&\qquad\quad{}+ \E \biggl[\sup_{t\in[0,(T/n)]}\biggl\llvert \int
_0^t \sum_k
\bigl(\sigma _k\bigl(X^\mu (s,x)\bigr)-\sigma_k
\bigl(X^\nu(s,x)\bigr)\bigr) \,d B_s^k\biggr
\rrvert \Big|\mathcal {F}_0 \biggr]
\nonumber
\\
&&\qquad\leq \bigl(L_K (T/n)+C_1L_\sigma(T/n)^{{1}/2}
\bigr) \E \Bigl[\sup_{t\in[0,T]}\bigl\llvert X^\mu(s,x)-X^\nu(s,x)
\bigr\rrvert \big|\mathcal {F}_0 \Bigr]
\nonumber
\\
&&\qquad\quad{}+L_K (T/n)\E \Bigl[\sup_{t\in[0,(T/n)]}W_1(
\mu_t,\nu_t) \big|\mathcal {F}_0 \Bigr].
\nonumber
\end{eqnarray*}
Hence,
\begin{eqnarray*}
&&\E \Bigl[\sup_{t\in[0,(T/n)]}\bigl\llvert X^\mu(t,x)-X^\nu(t,x)
\bigr\rrvert \big|\mathcal{F}_0 \Bigr] \\
&&\qquad\leq\frac{1}{1-C (1,(T/n)
)}L_K(T/n)
\E \Bigl[\sup_{t\in[0,(T/n)]}W_1(\mu_t,
\nu_t) \big|\mathcal {F}_0 \Bigr],
\end{eqnarray*}
where $C (1,(T/n) )$ is defined in \eqref{second_constant}.

We now prove the inductive step. Suppose that for some $m-1\leq n$, it holds
%
\begin{eqnarray}
\label{induzione1}&& \E \Bigl[\sup_{t\in[({(m-2)}/{n})T,({(m-1)}/{n}) T]}\bigl\llvert
X^\mu (t,x)-X^\nu(t,x)\bigr\rrvert \big|\mathcal{F}_0
\Bigr]
\nonumber
\\[-8pt]
\\[-8pt]
\nonumber
&&\qquad\leq \Biggl(L_K(T/n)\sum_{i=1}^{m-1}
\biggl( \frac{1}{1-C (1,(T/n) )} \biggr)^i \Biggr)\E \Bigl[\sup
_{t\in[0,T]}W_1(\mu_t,\nu_t) \big|
\mathcal{F}_0 \Bigr],
\end{eqnarray}
we will prove the same for $m$.
In the same way as in the first step, one can deduce
%
\begin{eqnarray}\label
{ind1}
\label{ind3} &&\E \Bigl[\sup_{t\in[({(m-1)}/{n})T,({m}/n) T]}\bigl |X^\mu
(t,x)-X^\nu (t,x) \bigr| \big|\mathcal{F}_0 \Bigr]
\nonumber
\\[-8pt]
\\[-8pt]
\nonumber
&&\qquad\leq \E \bigl[
\bigl\llvert X^\mu \bigl((m-1)T/n,x\bigr)-X^\nu
\bigl((m-1)T/n,x\bigr)\bigr\rrvert \big|\mathcal{F}_0 \bigr]
\\
&&\label{ind2}\qquad\quad{}+ \bigl(L_K (T/n)+C_1L_\sigma(T/n)^{{1}/2}
\bigr)
\nonumber
\\[-8pt]
\\[-8pt]
\nonumber
&&\qquad\quad{} \times\E \Bigl[\sup_{t\in
[({(m-1)}/{n})T,({m}/n) T]}\bigl\llvert X^\mu(s,x)-X^\nu(s,x)
\bigr\rrvert \big|\mathcal{F}_0 \Bigr]
\\
&&\qquad\quad{}+L_K (T/n)\E \Bigl[\sup_{t\in[({(m-1)}/{n})T,({m}/n) T]}W_1(
\mu _t,\nu _t) \big|\mathcal{F}_0 \Bigr].
\end{eqnarray}
Now we use the inductive hypothesis \eqref{induzione1} to estimate
\eqref{ind1}. We put \eqref{ind2} on the left-hand side and we note
that the supremum in \eqref{ind3} is less than the supremum over the
whole interval
\begin{eqnarray*}
&&C_{1,(T/n)}^{-1}\E \Bigl[\sup_{t\in[({(m-1)}/{n})T,({m}/n)
T]}\bigl
\llvert X^\mu(t,x)-X^\nu(t,x)\bigr\rrvert \big|
\mathcal{F}_0 \Bigr]
\\
&&\qquad\leq \Biggl(L_KT\sum
_{i=1}^{(m-1)} \biggl( \frac{1}{1-C (1,(T/n)
)}
\biggr)^i \Biggr)\E \Bigl[\sup_{t\in[0,T]}W_1(
\mu_t,\nu_t) \big|\mathcal {F}_0 \Bigr]
\\
&&\quad\qquad{}+L_K (T/n)\E \Bigl[\sup_{t\in[0,T]}W_1(
\mu_t,\nu_t) \big|\mathcal {F}_0 \Bigr]
\\
&&\qquad= L_K(T/n) \Biggl(\sum_{i=1}^{(m-1)}
\biggl( \frac{1}{1-C
(1,(T/n) )} \biggr)^i+1 \Biggr)\E \Bigl[\sup
_{t\in
[0,T]}W_1(\mu_t,\nu _t) \big|
\mathcal{F}_0 \Bigr].
\end{eqnarray*}
So, \eqref{induzione1} is proved for $m$.

Finally, to obtain the constant of Lemma~\ref{drift diverso} notice that $ \frac
{1}{1-C (1,(T/n) )}>1$, hence $ ( \frac{1}{1-C
(1,(T/n) )} )^{i}\leq ( \frac{1}{1-C
(1,(T/n)
)} )^{n}$ when $i\leq n$. Thus, the constant in \eqref
{induzione1}, in the case $m=n$, can be further estimate by
\begin{eqnarray*}
\Biggl(L_K(T/n)\sum_{i=1}^m
\biggl( \frac{1}{1-C (1,(T/n)
)} \biggr)^i \Biggr)&\leq&
\Biggl(L_K(T/n)\sum_{i=1}^n
\biggl( \frac
{1}{1-C (1,(T/n) )} \biggr)^i \Biggr) \\
&\leq&
L_K(T/n) n \biggl( \frac{1}{1-C (1,(T/n) )} \biggr)^n.
\end{eqnarray*}
This last term is exactly $\gamma_T$ because of the definition of
$C_{1,T}$ [see \eqref{second_constant}].
\end{pf}

\begin{lemma}\label{existence}
For every $T>0$, we have
\[
d_{\mathcal{S}}(\Phi_{\mu_0} \mu,\Phi_{\mu_0} \nu)\leq
\gamma_T d_{\mathcal{S}}(\mu,\nu) \qquad\forall \mu,\nu\in\mathcal{S},
\]
where $\gamma_T$ is defined in \eqref{gamma}.
\end{lemma}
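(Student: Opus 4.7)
The plan is to exploit the fact that $(\Phi_{\mu_0}\mu)_t$ and $(\Phi_{\mu_0}\nu)_t$ are push-forwards of the \emph{same} random measure $\mu_0$ by two stochastic flows $X^\mu(t,\cdot)$ and $X^\nu(t,\cdot)$, which gives a canonical coupling. First I would note that, working $\omega$-wise, the map $x\mapsto (X^\mu(t,x,\omega), X^\nu(t,x,\omega))$ pushes $\mu_0(\omega)$ forward to an element of $\Gamma((\Phi_{\mu_0}\mu)_t(\omega),(\Phi_{\mu_0}\nu)_t(\omega))$. Applying the definition of the $1$-Wasserstein distance to this particular coupling yields the pointwise bound
\[
W_1\bigl((\Phi_{\mu_0}\mu)_t(\omega),(\Phi_{\mu_0}\nu)_t(\omega)\bigr)\le \int_{\R^d}|X^\mu(t,x,\omega)-X^\nu(t,x,\omega)|\,\mu_0(\omega,dx).
\]

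Next I would pass from the one-time estimate to the supremum in $t$, using $\sup_t\int f_t\le\int\sup_t f_t$ for non-negative integrands, and then take expectation:
\[
d_{\mathcal{S}}(\Phi_{\mu_0}\mu,\Phi_{\mu_0}\nu)\le\E\!\left[\int_{\R^d}\sup_{t\in[0,T]}|X^\mu(t,x)-X^\nu(t,x)|\,\mu_0(dx)\right].
\]
Since $\mu_0$ is $\mathcal{F}_0$-measurable, conditioning on $\mathcal{F}_0$ and invoking the conditional Fubini argument (Proposition \ref{ScambioCondizionale}) lets me move the conditional expectation inside the $\mu_0$-integral:
\[
d_{\mathcal{S}}(\Phi_{\mu_0}\mu,\Phi_{\mu_0}\nu)\le\E\!\left[\int_{\R^d}\E\!\left[\sup_{t\in[0,T]}|X^\mu(t,x)-X^\nu(t,x)|\,\Big|\,\mathcal{F}_0\right]\mu_0(dx)\right].
\]

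Finally I would plug in Lemma \ref{drift diverso}, which bounds the inner conditional expectation uniformly in $x$ by $\gamma_T\,\E[\sup_{t\in[0,T]}W_1(\mu_t,\nu_t)\mid\mathcal{F}_0]$. The $x$-integral against the probability measure $\mu_0$ contributes only a factor $1$, and one more tower property collapses the conditional expectation back to the unconditional one, producing exactly $\gamma_T\,d_{\mathcal{S}}(\mu,\nu)$. The only subtle point I expect is the justification of the conditional Fubini swap and the use of a version of $X^\mu(t,x,\omega)$ jointly measurable and continuous in $x$ (to make the $\omega$-wise push-forward well-defined and the integrands measurable); both have already been set up in Proposition \ref{propuno} and the discussion following Lemma \ref{limsol}, so the remainder is bookkeeping.
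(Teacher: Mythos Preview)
Your proposal is correct and follows essentially the same route as the paper's proof: both construct the canonical coupling $(X^\mu(t,\cdot),X^\nu(t,\cdot))_\#\mu_0$, bound $d_{\mathcal{S}}$ by $\E\bigl[\sup_t\int|X^\mu-X^\nu|\,d\mu_0\bigr]$, then condition on $\mathcal{F}_0$, invoke Proposition~\ref{ScambioCondizionale} to move the conditional expectation inside, and finish with Lemma~\ref{drift diverso}. The subtleties you flag (continuous version in $x$, conditional Fubini) are exactly the ones the paper relies on as well.
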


\begin{pf}
Let $\omega\in\Omega$ and $t\in[0,T]$ be fixed. The measure
$m=(X^\mu
(t,\cdot,\omega),\break  X^\nu(t,\cdot,\omega))_\#\mu_0$ belongs to $\Gamma((\Phi
_{\mu
_0} \mu)_t(\omega),(\Phi_{\mu_0} \nu)_t(\omega))$. Indeed, for every
$A\in\mathcal{\R}^{2d}$, it holds $m(B)=\mu_0\{x\in\R^d: X^\mu
(t,x,\omega),X^\nu(t,x,\omega)\in B\}$, which implies, for every
$A\in
\mathcal{B}(\R^d)$,
\begin{eqnarray*}
m\bigl(A\times\R^d\bigr)&=&\mu_0\bigl\{x\in
\R^d:X^\mu(t,x,\omega)\in A\bigr\}\\
&=&X^\mu (t,\cdot,
\omega )_\#\mu_0(A)=(\Phi_{\mu_0} \mu)_t(\omega)
(A).
\end{eqnarray*}
In the same way, $m(\R^d\times A)=(\Phi_{\mu_0} \nu)_t(\omega)(A)$.
Thus, from the definition of the Wasserstein metric $W_1$, it is easy
to see that
\[
d_{\mathcal{S}}(\Phi_{\mu_0} \mu,\Phi_{\mu_0} \nu)\leq\E
\biggl[\sup_{t\in[0,T]}\int_{\R^d}\bigl|X^\mu(t,x)-X^\nu(t,x)\bigr|
\,d\mu_0 \biggr].
\]
From the $\mathcal{F}_0$-measurability of the initial condition $\mu_0$
and applying Proposition~\ref{ScambioCondizionale}, we have the following:
\begin{eqnarray*}
&&\E \biggl[\E \biggl[\sup_{t\in[0,T]}\int_{\R^d}\bigl|X^\mu(t,x)-X^\nu
(t,x)\bigr| \,d\mu _0 \Big|\mathcal{F}_0 \biggr] \biggr]\\
&&\qquad=\E
\biggl[ \int_{\R^d}\E \Bigl[\sup_{t\in[0,T]}\bigl|X^\mu(t,x)-X^\nu(t,x)\bigr|
\Big|\mathcal{F}_0 \Bigr] \,d\mu _0 \biggr].
\end{eqnarray*}
Now we complete the proof applying Lemma~\ref{drift diverso} as follows:
\begin{eqnarray*}
d_{\mathcal{S}}(\Phi_{\mu_0} \mu,\Phi_{\mu_0} \nu)&\leq&\E
\biggl[\int_{\R
^d}\E \Bigl[\sup_{t\in[0,T]}\bigl
\llvert X^\mu(t,x)-X^\nu(t,x)\bigr\rrvert\big |
\mathcal{F}_0 \Bigr] \,d\mu_0 \biggr]\\
& \leq&
\gamma_Td_{\mathcal
{S}}(\mu,\nu).
\end{eqnarray*}
\upqed\end{pf}

\section{Convergence and propagation of chaos}\label{sezione_convergenza}
In this section, we will show that the distance between two solutions
of \eqref{SPDE Ito form} can be estimated by the distance between the
respective initial conditions. Since we have shown in Section~\ref{Settings} that the empirical measure solves \eqref{SPDE Ito form} with
the appropriate initial condition, we will be able to deduce from \ref
{lemma_convergenza} some results of propagation of chaos.

Last we will give a review on recent quantitative results that can be
applied together with Theorem~\ref{lemma_convergenza} to obtain a more
explicit rate of convergence to approximate the solution of SPDE \eqref
{SPDE Ito form} with the solution of SDE \eqref{SDE 1}.


\begin{theorem}\label{lemma_convergenza}
Given $T>0$, let $\mu_0,\nu_0 :\Omega\to\mathcal{P}_1(\R^d)$ be
as in
Hypothesis \ref{Initial Condition}, and let $\mu\in\mathcal{S}_{\mu
_0}$, $\nu\in\mathcal{S}_{\nu_0}$ be the respective solutions of
equation \eqref{SPDE Ito form} given by the contraction method
described before, there exists a constant $\tilde C_T>0$, such that
\[
d_{\mathcal{S}}(\mu,\nu)\leq\tilde{C}_T\E\bigl[ W_1(
\mu_0,\nu_0)\bigr].
\]
\end{theorem}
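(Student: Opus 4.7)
The plan is to reuse the flow-based representation of the fixed points together with Lemmas \ref{drift diverso} and \ref{limsol}: couple the two solutions by transporting an optimal coupling of the initial data through the stochastic flows, and split the resulting displacement by a triangle inequality into a ``different drift, same start'' contribution and a ``same drift, different start'' contribution.

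First, I would fix a measurable selection $m_0\colon\Omega\to\mathcal{P}(\mathbb{R}^{2d})$ of the optimal coupling of $(\mu_0,\nu_0)$, so that $m_0(\omega)\in\Gamma(\mu_0(\omega),\nu_0(\omega))$ and $\int|x-y|\,dm_0(\omega)=W_1(\mu_0(\omega),\nu_0(\omega))$. Since $\mu_0,\nu_0$ are $\mathcal{F}_0$-measurable, $m_0$ can be taken $\mathcal{F}_0$-measurable by a standard measurable-selection argument in optimal transport. Because $\mu$ and $\nu$ are the fixed points of $\Phi_{\mu_0}$ and $\Phi_{\nu_0}$, for a.e.\ $\omega$ and every $t$ one has $\mu_t=X^\mu(t,\cdot)_\#\mu_0$ and $\nu_t=X^\nu(t,\cdot)_\#\nu_0$, so the pushforward $(X^\mu(t,\cdot,\omega),X^\nu(t,\cdot,\omega))_\# m_0(\omega)$ lies in $\Gamma(\mu_t(\omega),\nu_t(\omega))$. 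The definition of $W_1$ then gives
\[
W_1(\mu_t,\nu_t)\leq\int_{\mathbb{R}^{2d}}|X^\mu(t,x)-X^\nu(t,y)|\,dm_0(x,y).
\]

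Next, fix a sub-horizon $\tau>0$, take $\sup_{t\in[0,\tau]}$ inside the integral, then total expectation, and exchange the $m_0$-integration with the conditional expectation given $\mathcal{F}_0$ (via Proposition \ref{ScambioCondizionale}, exactly as in Lemma \ref{existence}). Splitting $|X^\mu(t,x)-X^\nu(t,y)|\leq|X^\mu(t,x)-X^\nu(t,x)|+|X^\nu(t,x)-X^\nu(t,y)|$ and applying Lemma \ref{drift diverso} to the first piece (giving $\gamma_\tau\,\mathbb{E}[\sup_{[0,\tau]}W_1(\mu_t,\nu_t)\mid\mathcal{F}_0]$ uniformly in $x$) and Lemma \ref{limsol} with $p=1$ to the second (giving $C_{1,\tau}|x-y|$) yields, since $m_0$ has total mass one and its marginal cost is $W_1(\mu_0,\nu_0)$,
\[
\mathbb{E}\bigl[\sup_{t\in[0,\tau]}W_1(\mu_t,\nu_t)\bigr]\leq\gamma_\tau\,\mathbb{E}\bigl[\sup_{t\in[0,\tau]}W_1(\mu_t,\nu_t)\bigr]+C_{1,\tau}\,\mathbb{E}[W_1(\mu_0,\nu_0)].
\]
Choosing $t^*>0$ small enough that $\gamma_{t^*}<1$ (possible because $\gamma_\tau=L_K\tau C_{1,\tau}\to 0$ as $\tau\to 0$) and absorbing the first right-hand term on the left gives $\mathbb{E}[\sup_{[0,t^*]}W_1(\mu_t,\nu_t)]\leq\frac{C_{1,t^*}}{1-\gamma_{t^*}}\,\mathbb{E}[W_1(\mu_0,\nu_0)]$.

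Finally, to reach arbitrary $T>0$, I would iterate: the same argument applied on $[kt^*,(k+1)t^*]$ with $\mu_{kt^*},\nu_{kt^*}$ as initial data and $\mathcal{F}_{kt^*}$ in place of $\mathcal{F}_0$ multiplies the constant by $C_{1,t^*}/(1-\gamma_{t^*})$, so $\lceil T/t^*\rceil$ iterations deliver the claim with $\tilde C_T=\bigl(C_{1,t^*}/(1-\gamma_{t^*})\bigr)^{\lceil T/t^*\rceil}$. The main obstacles are two technical points that are handled as in Lemma \ref{existence}: the $\mathcal{F}_0$-measurability of the random optimal coupling $m_0(\omega)$, and the rigorous interchange between $m_0$-integration and conditional expectation given $\mathcal{F}_0$; the latter relies on the continuous-in-$x$ modification of $X^\mu,X^\nu$ obtained from Lemma \ref{limsol} and Kolmogorov's criterion, as used in Proposition \ref{propuno}.
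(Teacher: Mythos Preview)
Your proposal is correct and follows essentially the same approach as the paper: both use the flow representation of the fixed points, an $\mathcal{F}_0$-measurable optimal coupling of the initial data (Proposition \ref{measurability}), the interchange of conditional expectation and $m_0$-integration (Proposition \ref{ScambioCondizionale}), a triangle-inequality split into a ``different drift'' piece controlled by Lemma \ref{drift diverso} and a ``different start'' piece controlled by Lemma \ref{limsol}, and finally an iteration over small subintervals. The only cosmetic difference is that the paper performs the split at the level of the operators, writing $d_{\mathcal{S}}(\mu,\nu)\leq d_{\mathcal{S}}(\Phi_{\mu_0}\mu,\Phi_{\nu_0}\mu)+d_{\mathcal{S}}(\Phi_{\nu_0}\mu,\Phi_{\nu_0}\nu)$ and invoking Lemma \ref{existence} for the second term, whereas you split pointwise inside the integral and invoke Lemma \ref{drift diverso} directly; since Lemma \ref{existence} is itself obtained from Lemma \ref{drift diverso} by this very integration, the two arguments are the same.
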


\begin{pf}
Given $T>0$, we define
\[
\tilde{C}_{T}:= \biggl(\frac{1}{(1-\gamma_{(T/n)}) (1-C
(1,(T/n) ) )} \biggr)^n,
\]
where $n\in\N$ is the smallest integer such that $\gamma
_{(T/n)}=L_kTC_{1,T}< 1$; see \eqref{second_constant} for the
definition of $C_{1,T}$, and $C (1,(T/n) )<1$, defined in
\eqref{first_constant}.
We will give the proof in the case when $T$ is small enough such that
$n=1$ and we refer to the inductive procedure used in Lemma~\ref
{limsol} for the general case. Notice that under this assumption
\[
\tilde{C}_{T}:=\frac{C_{1,T}}{1-\gamma_{T}},
\]
where $C_{1,T}$ is defined in \eqref{second_constant}.

Notice that, since $\|\mu_0\|=\|\nu_0\|=1$, the Lipschitz constants of
$b_\mu$ and $b_\nu$ are the same, $L_K$. Moreover, recalling the
definition of the operator $\Phi_{\mu_0}$ (resp., $\Phi_{\nu_0}$), it
holds that its fixed point $\mu$ (resp., $\nu$) can be written as
$\mu
_t=X^\mu(t,\cdot)_\#\mu_0$ [resp., $\nu_t=X^\nu(t,\cdot)_\#\nu_0$] where
$X^\mu
(t,x,\omega)$ [resp., $X^\nu(t,x,\omega)$] is a continuous version of
the solution of equation \eqref{liouville} with drift coefficient
$b_\mu
$ (resp., $b_\nu$).
Let now $\omega$ be fixed. Notice that the infimum in the definition of
the Wasserstein metric is indeed a minimum (see \cite{AGS}, Chapter~6),
that is, there exists a measure $m(\omega)\in\Gamma(\mu_0(\omega
),\nu
_0(\omega))$ such that
%
\begin{equation}
\label{29} \int_{\R^{d}\times\R^d}\bigl|x-x'\bigr|m\bigl(\omega, d
x, d x'\bigr)= W_1\bigl(\nu _0(\omega),\mu
_0(\omega)\bigr).
\end{equation}
Moreover, the function $\omega\mapsto m(\omega)$ is $\mathcal
{F}_0$-measurable. Indeed, for every couple of measures $(\mu,\nu)\in
\mathcal{P}_1\times\mathcal{P}_1$ we can construct a measurable map
$(\mu,\nu)\mapsto m\in\Gamma_0(\mu,\nu)$ using Proposition~\ref
{measurability} in the \hyperref[app]{Appendix}, and then we can see that the function
$\omega\mapsto(\mu_0(\omega),\nu_0(\omega))\mapsto m(\omega)$ is
$\mathcal{F}_0$-measurable since it is a composition of measurable functions.
If we define $m_t(\omega)=(X^\mu(t,\cdot,\omega),X^\nu(t,\cdot,\break \omega))_\#
m(\omega)$, we get $m_t\in\Gamma((\Phi_{\mu_0}\mu)_t,(\Phi_{\nu
_0}\nu)_t)$.

As a particular case of Lemma~\ref{limsol}, we have that
%
\begin{equation}
\label{28} \E \Bigl[\sup_{t\in[0,T]}\bigl\llvert
X^\mu(t,x)-X^\mu\bigl(t,x'\bigr)\bigr\rrvert
\big|\mathcal {F}_0 \Bigr]\leq C_{1,T}\bigl|x-x'\bigr|,
\end{equation}
where $x,x'\in\R^d$ are two initial condition for equation \eqref{strong}.

In the following estimates, we use the definition of the Wasserstein
metric, the definition of $m_t$, Proposition~\ref{ScambioCondizionale},
inequality \eqref{28} and identity \eqref{29},
%
\begin{eqnarray}
\label{A}&& \E \Bigl[\sup_{t\in[0,T]}W_1\bigl((
\Phi_{\mu_0}\mu)_t,(\Phi_{\nu
_0}\mu )_t
\bigr) \Bigr]\nonumber\\
&&\qquad\leq\E \biggl[\sup_{t\in[0,T]}\int_{\R^{2d}}\bigl|x-x'\bigr|
\,d m_t\bigl(x,x'\bigr) \biggr]
\nonumber
\\
&&\qquad= \E \biggl[\E \biggl[\sup_{t\in[0,T]}\int_{\R^{2d}}
\bigl\llvert X^\mu (t,x)-X^\mu \bigl(t,x'
\bigr)\bigr\rrvert \,d m\bigl(x,x'\bigr)\Big |\mathcal{F}_0
\biggr] \biggr]
\nonumber
\\[-8pt]
\\[-8pt]
\nonumber
&&\qquad\leq\E \biggl[\int_{\R^{2d}}\E \Bigl[\sup_{t\in[0,T]}
\bigl\llvert X^\mu (t,x)-X^\mu\bigl(t,x'\bigr)
\bigr\rrvert \big|\mathcal{F}_0 \Bigr] \,d m\bigl(x,x'\bigr)
\biggr]
\\
&&\qquad\leq \E \biggl[\int_{\R^{2d}}C_{1,T}\bigl|x-x'\bigr|
\,d m\bigl(x,x'\bigr) \biggr]
\nonumber
\\
&&\qquad= C_{1,T}\E\bigl[W_1(\mu_0,
\nu_0)\bigr].\nonumber
\end{eqnarray}
%
Using now the definition of the operators $\Phi_{\mu_0}, \Phi_{\nu
_0} $
and a triangular inequality, we obtain
%
\begin{eqnarray}
\label{B} d_{\mathcal{S}}(\mu,\nu)&= & d_{\mathcal{S}}(
\Phi_{\mu_0}\mu,\Phi _{\nu
_0}\nu)
\nonumber
\\
&\leq& d_{\mathcal{S}}(\Phi_{\mu_0}\mu,\Phi_{\nu_0}\mu) +
d_{\mathcal
{S}}(\Phi_{\nu_0}\mu,\Phi_{\nu_0}\nu)
\\
&\leq& C_{1,T}\E \bigl[W_1(\mu_0,
\nu_0) \bigr] + \gamma_T d_{\mathcal
{S}}(\mu,\nu).\nonumber
\end{eqnarray}
In the last inequality, we have used \eqref{A} and Lemma~\ref
{existence}. Inequality \eqref{B} leads to
\[
d_{\mathcal{S}}(\mu,\nu)\leq\frac{C_{1,T}}{1-\gamma_T}\E\bigl[W_1(\mu
_0,\nu_0)\bigr].
\]
%
\upqed\end{pf}

Reading the proof of this theorem, one may wonder if it is really
necessary to add the complication of splitting the time interval in
subintervals. Indeed a more simple calculation can lead to a global
estimate, although it can only be obtained if the initial conditions
belong $W_2$, which is a stronger assumption. Nevertheless, we will
give now the proof in that case so that the reader can compare the two
different approaches. Moreover, if one is interested in the $W_2$ norm,
one can apply this method to other results within this paper. We are
indebted to an anonymous referee for suggesting us this idea.

At the end of this subsection, we will stress what is the difficulty
encountered using $W_1$ which prevents us to obtain a straightforward
global estimation in time.

\begin{theorem}\label{convergenza_norma_2}
Under the same assumptions of Theorem~\ref{lemma_convergenza}, suppose
that the random measures $\mu_0,\nu_0$ take values in $\mathcal
{P}_2(\R
^d)$, namely they have finite second moments. Then it holds, for all
$t\leq T$,
\[
\E \bigl[W_2^2(\mu_t,\nu_t)
\bigr]\leq4e^{4t(2tL_k^2+C_2L_\sigma
^2)}\E \bigl[W_2^2(
\mu_0,\nu_0) \bigr],
\]
where $L_K$ and $L_\sigma$ are the Lipschitz constants of the
coefficients of the system and $C_2$ is the constant appearing in
Burkholder--Davis--Gundy inequality with exponent $2$.
\end{theorem}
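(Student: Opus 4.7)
The plan is to combine the representation of $\mu_t, \nu_t$ as push-forwards of $\mu_0, \nu_0$ along the stochastic flows $X^\mu, X^\nu$ (as in the fixed-point construction of Section \ref{contraction_subsection}) with an optimal coupling of the initial conditions, and then to close a Gronwall-type estimate by applying It\^o's formula to $|Y_t|^2$ where $Y_t = X^\mu(t,x) - X^\nu(t,x')$. The reason the splitting trick of Lemma \ref{limsol} and Theorem \ref{lemma_convergenza} becomes unnecessary is that when working in $W_2$ one avoids taking a supremum in time inside the noise estimate, so the noise contribution is controlled directly by the It\^o isometry (equivalently, BDG with exponent $2$), whose constant multiplies $\int_0^t \psi(s)\,ds$ rather than $t^{1/2}\sup_{s\le t}\psi(s)$.

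First I would pick an $\mathcal{F}_0$-measurable optimal $W_2$-coupling $m(\omega)\in\Gamma(\mu_0(\omega),\nu_0(\omega))$ (using the measurable selection argument already invoked in Theorem \ref{lemma_convergenza} and Proposition \ref{measurability}), push it forward to obtain $m_t = (X^\mu(t,\cdot),X^\nu(t,\cdot))_\# m \in \Gamma(\mu_t,\nu_t)$, and deduce
\[
\E[W_2^2(\mu_t,\nu_t)] \le \E\!\left[\int_{\R^{2d}}|X^\mu(t,x)-X^\nu(t,x')|^2\,dm(x,x')\right]
= \E\!\left[\int_{\R^{2d}}\psi_t(x,x')\,dm(x,x')\right],
\]
where $\psi_t(x,x') := \E\bigl[|X^\mu(t,x)-X^\nu(t,x')|^2\,\big|\,\mathcal{F}_0\bigr]$ and I have used Proposition \ref{ScambioCondizionale} as in the proof of Lemma \ref{existence}.

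Next I would apply It\^o's formula to $|Y_t|^2$ with $Y_t = X^\mu(t,x) - X^\nu(t,x')$, writing
\[
d|Y_t|^2 = 2Y_t\cdot\bigl[b_\mu(t,X^\mu_t)-b_\nu(t,X^\nu_t)\bigr]\,dt + 2Y_t\cdot dM_t + \sum_{k=1}^\infty |\sigma_k(X^\mu_t)-\sigma_k(X^\nu_t)|^2\,dt,
\]
with $M$ the local martingale part. Splitting the drift difference as $b_\mu(t,X^\mu_t)-b_\mu(t,X^\nu_t) + b_\mu(t,X^\nu_t)-b_\nu(t,X^\nu_t)$, the first summand is bounded by $L_K|Y_t|$ and the second by $L_K W_1(\mu_t,\nu_t) \le L_K W_2(\mu_t,\nu_t)$ thanks to \eqref{trucco}. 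Using Young's inequality $2ab \le a^2 + b^2$ on the drift term and the Lipschitz bound \eqref{Lipschitz property} on the quadratic variation term, then taking $\E[\,\cdot\,|\mathcal{F}_0]$ (the martingale part vanishes, as one can verify by a standard localisation argument since $\sum_k|\sigma_k|^2$ is bounded), I obtain a pointwise inequality of the form
\[
\psi_t(x,x') \;\le\; |x-x'|^2 \;+\; \bigl(c_1+c_2 L_\sigma^2\bigr)\int_0^t \psi_s(x,x')\,ds \;+\; c_3 L_K^2 \int_0^t \E[W_2^2(\mu_s,\nu_s)\,|\,\mathcal{F}_0]\,ds,
\]
for explicit numerical constants $c_1,c_2,c_3$ depending only on how Young's inequality is balanced; the constant $C_2$ of the theorem is the $p=2$ BDG constant that appears once one handles the martingale piece rigorously.

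Finally I would integrate this inequality against $m$, take $\E$, and use $\int |x-x'|^2\,dm(x,x') = W_2^2(\mu_0,\nu_0)$ to reach
\[
\E[W_2^2(\mu_t,\nu_t)] \;\le\; \E[W_2^2(\mu_0,\nu_0)] \;+\; \bigl(c_1+c_2 L_\sigma^2+c_3 L_K^2\bigr)\int_0^t \E[W_2^2(\mu_s,\nu_s)]\,ds,
\]
after bounding $\E[W_2^2(\mu_s,\nu_s)]$ against $\E[\int \psi_s\,dm]$ inside the last term (which is where the self-consistent structure kicks in). Gronwall's lemma then yields the exponential estimate, with the constants rearranging into the stated $4e^{4t(2tL_K^2+C_2 L_\sigma^2)}$ form after one tracks the Young-inequality balance carefully. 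The main obstacle I expect is precisely this bookkeeping of constants so as to recover the exact prefactor advertised in the statement, and ensuring that the conditional Fubini/BDG steps are legitimate; the finiteness of second moments of $\mu_0,\nu_0$ is used here to guarantee that $\E\int|X^\mu(t,x)|^2\,d\mu_0(x)<\infty$, justifying the otherwise-formal manipulations.
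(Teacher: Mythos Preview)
Your proposal is correct and follows the same overall strategy as the paper: take an $\mathcal{F}_0$-measurable optimal $W_2$-coupling $m$ of $(\mu_0,\nu_0)$, push it forward along $(X^\mu,X^\nu)$, pass to the conditional expectation $\psi_t(x,x')=\E[|X^\mu(t,x)-X^\nu(t,x')|^2\,|\,\mathcal{F}_0]$, and close a Gronwall loop after integrating against $m$ and using $\E[W_2^2(\mu_s,\nu_s)]\le \E\int\psi_s\,dm$.

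The one genuine methodological difference is how the squared increment is estimated. You apply It\^o's formula to $|Y_t|^2$, so that the martingale part has zero conditional expectation and the noise contribution enters only through the quadratic variation $\sum_k|\sigma_k(X^\mu_s)-\sigma_k(X^\nu_s)|^2\le L_\sigma^2|Y_s|^2$. The paper instead squares the integral form of the SDE directly (a ``parallelogram'' inequality) and controls the stochastic integral term via the Burkholder--Davis--Gundy inequality with exponent $2$; this is precisely where the constant $C_2$ in the stated bound originates. Your route is slightly cleaner and yields a sharper constant (no $C_2$ is needed at all), which explains your difficulty in locating $C_2$ ``once one handles the martingale piece rigorously'': in your argument it simply does not appear, and you should not expect to reproduce the exact prefactor $4e^{4t(2tL_K^2+C_2L_\sigma^2)}$ but rather an analogous explicit exponential. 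Apart from this bookkeeping point, the argument is sound.
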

\begin{pf}
Proceeding as in the proof of Theorem~\ref{lemma_convergenza}, we can
find a random measure $m\in\Gamma_0(\mu_0,\nu_0)$, such that
$W_2^2(\mu
_0,\nu_0)=\int_{\R^d\times\R^d}|x-x'| \,d m(x, x')$. Moreover, it holds
\begin{eqnarray*}
\E \bigl[W_2^2(\mu_t,\nu_t)
\bigr]&\leq&\E \biggl[\int\bigl|X^\mu (t,x)-X^\nu
\bigl(t,x'\bigr)\bigr|^2\,dm \biggr]\\
&=&\E \biggl[\int\E
\bigl[\bigl|X^\mu(t,x)-X^\nu \bigl(t,x'
\bigr)\bigr|^2\Big|\mathcal{F}_0 \bigr]\,dm\bigl(x,x'
\bigr) \biggr].
\end{eqnarray*}
Hence, we proceed estimating the conditional expectation in the last
term using that $X^\mu(t,x)$ and $X^\nu(t,x')$ solve \eqref{strong} and
a parallelogram inequality,
%
\begin{eqnarray}
&&\E \bigl[\bigl|X^\mu(t,x)-X^\nu\bigl(t,x'
\bigr)\bigr|^2|\mathcal{F}_0 \bigr]
\nonumber
\\[-8pt]
\\[-8pt]
\nonumber
&&\qquad\leq 2\bigl|x-x'\bigr|^2\label{condizione_iniziale}
\\
&&\qquad\quad{}+2\E \biggl[ \biggl(\int_0^t\bigl|b_{\mu_s}
\bigl(X^\mu(s,x)\bigr)-b_{\nu_s}\bigl(X^\nu
\bigl(s,x'\bigr)\bigr)\bigr|\,ds \biggr)^2\Bigr|
\mathcal{F}_0 \biggr]\label{termine_nonlineare}
\\
&&\qquad\quad{}+2\E \biggl[ \biggl(\int_0^t\sum
_k\bigl|\sigma_k\bigl(X^\mu(s,x)\bigr)-
\sigma _k\bigl(X^\nu \bigl(s,x'\bigr)
\bigr)\bigr|\,dB_s^k \biggr)^2\Big|\mathcal{F}_0
\biggr].\label{termine_stocastico}
\end{eqnarray}
Using a Burkholder--Davis--Gundy inequality and the Lipschitz
continuity of $\sigma_k$, we can estimate \eqref{termine_stocastico}
as follows:
%
\begin{eqnarray}
\label{termine_stocastico_finale}&& 2\E \biggl[ \biggl(\int_0^t\sum
_k\bigl|\sigma_k\bigl(X^\mu(s,x)
\bigr)-\sigma _k\bigl(X^\nu \bigl(s,x'\bigr)
\bigr)\bigr|\,dB_s^k \biggr)^2\Big|\mathcal{F}_0
\biggr]
\nonumber
\\[-8pt]
\\[-8pt]
\nonumber
&&\qquad\leq2C_2L_\sigma^2\E \biggl[\int
_0^t\bigl|X^\mu(t,x)-X^\nu
\bigl(t,x'\bigr)\bigr|^2\,ds\Big|\mathcal{F}_0
\biggr].
\end{eqnarray}
To estimate \eqref{termine_nonlineare}, we first apply the Jensen
inequality, then we need to split the drift using a triangular
inequality and then use the Lipschitz continuity of $K$,
%
\begin{eqnarray}
&&2\E \biggl[ \biggl(\int_0^t\bigl|b_{\mu_s}
\bigl(X^\mu(s,x)\bigr)-b_{\nu_s}\bigl(X^\nu
\bigl(s,x'\bigr)\bigr)\bigr|\,ds \biggr)^2\Big|
\mathcal{F}_0 \biggr]\nonumber\\
&&\qquad \leq2t\E \biggl[\int_0^t\bigl|b_{\mu
_s}
\bigl(X^\mu(s,x)\bigr)-b_{\nu_s}\bigl(X^\nu
\bigl(s,x'\bigr)\bigr)\bigr|^2\,ds\Big|\mathcal{F}_0
\biggr]
\nonumber
\\
&&\qquad \leq 4t\E \biggl[\int_0^t\,ds\int\bigl|K
\bigl(X^\mu(s,x)-y\bigr)-K\bigl(X^\nu \bigl(s,x'
\bigr)-y\bigr)\bigr|^2\,d\mu _s(y)
\nonumber
\\
&&\qquad\quad{}+\biggl\llvert \int\bigl(K\bigl(X^\nu\bigl(s,x'
\bigr)-y\bigr)\nonumber\\
&&\qquad\quad{}-K\bigl(X^\nu\bigl(s,x'
\bigr)-y'\bigr)\bigr)\,d\bigl(\mu _s(y)-\nu
_s\bigl(y'\bigr)\bigr)\biggr\rrvert ^2\Big|
\mathcal{F}_0 \biggr]
\nonumber
\\
&&\qquad\leq 4tL_k^2\int_0^t
\E \bigl[\bigl|X^\mu(s,x)-X^\nu\bigl(s,x'
\bigr)\bigr|^2|\mathcal {F}_0 \bigr]\,ds\label{termine_facile}
\\
&&\qquad\quad{}+4tL_k^2\int_0^t\E
\bigl[W_2^2(\mu_s,\nu_s)|
\mathcal{F}_0 \bigr]\,ds.\label{termine_difficile}
\end{eqnarray}
We used here a property of the Wassertein metric which we already used
and proved in the proof of Lemma~\ref{drift diverso} [see \eqref
{primterm}] for $W_1$, but which can be straightforwardly readapted to $W_2$.

We now put together \eqref{condizione_iniziale}, \eqref
{termine_facile}, \eqref{termine_difficile} and \eqref
{termine_stocastico_finale} to obtain
\begin{eqnarray*}
&&\E \bigl[W_2^2(\mu_t,\nu_t)
\bigr]\\
&&\qquad\leq\E \biggl[\int\bigl|X^\mu (t,x)-X^\nu
\bigl(t,x'\bigr)\bigr|^2\,dm\bigl(x,x'\bigr)
\biggr]\\
&&\qquad\leq 2\E \bigl[W_2^2(\mu_0,
\nu_0) \bigr]
\\
&&\qquad\quad{}+4tL_k^2\int_0^t\E
\biggl[W_2^2(\mu_s,\nu_s)+
\int\bigl|X^\mu (s,x)-X^\nu \bigl(s,x'
\bigr)\bigr|^2\,dm\bigl(x,x'\bigr) \biggr]\,ds
\\
&&\qquad\quad{}+2C_2L_\sigma^2\int_0^t
\E \biggl[\int\bigl|X^\mu(s,x)-X^\nu \bigl(s,x'
\bigr)\bigr|^2\,dm\bigl(x,x'\bigr) \biggr]\,ds.
\end{eqnarray*}
Adding at the end the positive term $2C_2L_\sigma^2\int_0^t\E
[W_2^2(\mu_s,\nu_s) ]\,ds$, we can apply the Gronwall inequality
and obtain
\begin{eqnarray*}
&&\E \biggl[W_2^2(\mu_t,\nu_t)+
\int\bigl|X^\mu(t,x)-X^\nu \bigl(t,x'
\bigr)\bigr|^2\,dm\bigl(x,x'\bigr) \biggr]\\
&&\qquad\leq4e^{4t(2tL_k^2+C_2L_\sigma^2)}
\E \bigl[W_2^2(\mu_0,\nu _0)
\bigr].
\end{eqnarray*}
\upqed\end{pf}

\begin{remark}\label{Difference between L1 and L2}
Reading the proof of the previous theorem, one can be led to think that
it is possible to do the same calculations using the norm $W_1$, which
is true up to some point. In particular, following the idea of the
proof of Theorem~\ref{convergenza_norma_2} one can reach the inequality
\begin{eqnarray*}
\E \bigl[W_1(\mu_t,\nu_t) \bigr]&\leq&\E
\biggl[\int\bigl|X^\mu_t-X^\nu _t\bigr|\,dm
\biggr]\\
&\leq&\E \bigl[W_1(\mu_0,\nu_0)
\bigr]+L_k\int_0^t\E
\biggl[W_1(\mu_s,\nu _s)+
\int\bigl|X^\mu_s-X^\nu_s\bigr|\,dm \biggr]\,ds
\\
&&{}+C_1L_\sigma\E \biggl[\int \biggl(\int
_0^t\bigl|X^\mu_s-X^\nu
_s\bigr|^2\,ds \biggr)^{{1}/2}\,dm \biggr].
\end{eqnarray*}
The difficult term is the last one, indeed we do not see a way to get
rid of the powers or to switch them with the integrals. What we indeed
do in most of the proofs in this paper is to take the supremum in time
inside the integrals to obtain
\[
\E \biggl[\int \biggl(\int_0^t\bigl|X^\mu_s-X^\nu_s\bigr|^2\,ds
\biggr)^{{1}/2}\,dm \biggr]\leq tC_1L_\sigma\E
\biggl[\int\sup_{s\in[0,t]}\bigl|X^\mu_s-X^\nu
_s\bigr|\,dm \biggr].
\]
At this point, it is no longer possible to apply the Gronwall lemma,
but this last term can be subtracted in both sides of the estimations
to get something of the form $(1-tC_1L_\sigma)\E [\int\sup_{s\in
[0,t]}|X^\mu_s-X^\nu_s|\,dm ]\leq\cdots$, from which the need to do
the estimations in small intervals first.
\end{remark}

\subsection{Propagation of chaos}
Let $(\Omega, \mathcal{F},\mathcal{F}_t, \mathbb{P})$ be a filtered
probability space, and $(X^i_0)_{i\in\mathbb{N}}$ be a sequence of
symmetric $\R^d$-valued random variable on this space that are
measurable with respect to $\mathcal{F}_0$. We consider a collection
$B^k_t$, $k\geq1$, of independent Brownian motions on this space,
independent from the $X^i_0$, and we call $(\mathcal{F}^B_t)_{t\geq0}$
the filtration generated by $(B_t^k)_{k\geq1}$. 
For every $N\in\mathbb{N}$, $X^N=(X_t^{1,N},\ldots,X_t^{N,N})_{t\geq0}$ is
the solution of equation \eqref{SDE 1} with initial condition
$(X^1_0,\ldots,X^N_0)$. We will further suppose that the empirical measure
$S_0^N:=\frac{1}N\sum_{i=0}^N\delta_{X^i_0}$ converges to a random
probability measure $\mu_0$, in the metric $\E [W_1(\cdot,\cdot) ]$.
Under these settings, we will now prove Theorems~\ref{main theorem plus}
(which is slightly more general then Theorem~\ref{main theorem}) and
\ref{ConvergenzaL1}, but first we need the following lemma.

\begin{lemma}\label{simmetria}
Let $\sigma:\{1,\ldots,N\}\to\{1,\ldots,N\}$ be a permutation. Then
%
\begin{equation}
\label{simmetriacondizionale} \E \bigl[f\bigl(X_t^{1,N},
\ldots,X_t^{N,N}\bigr)|\mathcal{F}_{t}^B
\bigr]=\E \bigl[f\bigl(X_t^{\sigma(i),N},\ldots,X_t^{\sigma(N),N}
\bigr)|\mathcal{F}_{t}^B \bigr],
\end{equation}
for every $f\in C_b ((\R^d)^N )$.
\end{lemma}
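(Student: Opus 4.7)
The plan is to exploit the fact that the particle system \eqref{SDE 1} is symmetric under permutation of the indices $i=1,\ldots,N$: the drift $\frac{1}{N}\sum_{j=1}^N K(X_t^{i,N}-X_t^{j,N})$ is invariant under relabelling of the $X_t^{j,N}$, and the noise $\sum_k \sigma_k(X_t^{i,N})\,dB_t^k$ uses the \emph{same} family $(B^k)_{k\in\N}$ for every particle. Hence one and the same family of Brownian motions drives both the original system and any reindexed system.

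First I would invoke strong existence and uniqueness of solutions to \eqref{SDE 2}, which provides a measurable solution map
\[
\Psi:(\R^d)^N\times C([0,T];\R^\infty)\longrightarrow C([0,T];(\R^d)^N),\qquad (x,B)\mapsto \Psi(x,B),
\]
such that $(X_t^{1,N},\ldots,X_t^{N,N})=\Psi_t(X_0^{1:N},B_{\cdot})$ almost surely, where $X_0^{1:N}:=(X_0^1,\ldots,X_0^N)$. The symmetry of the SDE in the particle labels translates into the equivariance
\[
\Psi_t(\sigma\cdot x,B)=\sigma\cdot \Psi_t(x,B)\qquad\text{for every permutation }\sigma,
\]
where $\sigma$ acts on $(\R^d)^N$ by permuting the $N$ blocks. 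This is just a restatement of uniqueness: both sides solve the same SDE with the same initial datum and the same noise.

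Next I would use that $X_0^{1:N}$ is independent of $(B^k)_{k\in\N}$, hence independent of $\mathcal{F}_t^B$. Therefore, for any $f\in C_b((\R^d)^N)$,
\[
\E\!\left[f(X_t^{1,N},\ldots,X_t^{N,N})\,\big|\,\mathcal{F}_t^B\right]=\int_{(\R^d)^N} f\!\left(\Psi_t(x,B_{\cdot})\right)\, \mathbb{P}_{X_0^{1:N}}(dx),
\]
and analogously, using the equivariance of $\Psi_t$,
\[
\E\!\left[f(X_t^{\sigma(1),N},\ldots,X_t^{\sigma(N),N})\,\big|\,\mathcal{F}_t^B\right]=\int_{(\R^d)^N} f\!\left(\Psi_t(\sigma\cdot x,B_{\cdot})\right)\, \mathbb{P}_{X_0^{1:N}}(dx).
\]

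Finally, I would conclude via a change of variable $y=\sigma\cdot x$ in the second integral, using the assumed exchangeability (symmetry) of $(X_0^i)_{i\in\N}$, which gives $\mathbb{P}_{X_0^{1:N}}=\sigma_{\#}\mathbb{P}_{X_0^{1:N}}$. The two right-hand sides then coincide, proving \eqref{simmetriacondizionale}. The only delicate point is the justification of the independence-based disintegration formula above; modulo measurability of $\Psi$ (which follows from strong uniqueness) this is routine, so no real obstacle is expected.
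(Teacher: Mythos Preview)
Your argument is correct and uses the same underlying ingredients as the paper (symmetry of the SDE in the particle labels, exchangeability of the initial data, and the fact that one common family $(B^k)_k$ drives all particles), but the execution is genuinely different. The paper works at the level of \emph{laws}: it observes that $X^{\sigma,N}$ solves \eqref{SDE 1} with initial datum $(X_{\sigma(1)},\ldots,X_{\sigma(N)})$, invokes pathwise uniqueness and then Proposition~1.4 of Revuz--Yor to deduce that the \emph{joint law} of $(X_t^N,(B_t^k)_k)$ coincides with that of $(X_t^{\sigma,N},(B_t^k)_k)$, and finally tests against indicators $\mathds{1}_A$ with $A\in\mathcal{F}_t^B$ to identify the conditional expectations. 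Your route is \emph{pathwise}: you use strong uniqueness to produce a measurable solution functional $\Psi$, establish the equivariance $\Psi_t(\sigma\cdot x,B)=\sigma\cdot\Psi_t(x,B)$, and then apply the freezing lemma (independence of $X_0^{1:N}$ from $\mathcal{F}_t^B$) together with the permutation invariance of $\mathbb{P}_{X_0^{1:N}}$.

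Your approach is arguably more transparent once the solution map is granted, and it makes the role of the common noise and of exchangeability completely explicit; the price is that you must justify the existence and joint measurability of $\Psi$ and the validity of the disintegration formula for $\mathcal{F}_t^B$ (adaptedness gives that $\Psi_t(x,B_\cdot)$ depends only on $B|_{[0,t]}$, so this is routine). The paper's approach avoids constructing $\Psi$ and needs only uniqueness in law, at the cost of the slightly less elementary Revuz--Yor reference.
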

\begin{pf}
Let $X^{\sigma,N}:=(X_t^{\sigma(1),N},\ldots,X_t^{\sigma(N),N})_{t\geq0}$.
Since $X^N$ is a strong solution of equation $\eqref{SDE 1}$ with
initial condition $(X_1,\ldots,X_N)$ it is easy to see that $X^{\sigma,N}$
is a strong solution of equation $\eqref{SDE 1}$ with initial condition
$(X_{\sigma(1)},\ldots,X_{\sigma(N)})$. Since the coefficients $b$ and
$\sigma_k$ have the necessary Lipschitz properties (see \cite{Ku90}),
we have strong uniqueness at fixed initial data $x\in\R^d$. Thus, we
can apply Proposition~1.4 of \cite{revuz1999continuous} (notice that
$X^N$ and $X^{\sigma,N}$ have the same initial law) and we obtain
uniqueness in law. More precisely we have
\[
\bigl(X_t^{N},\bigl(B_t^k
\bigr)_{k\in\N}\bigr)_\#\mathbb{P}=\bigl(X_t^{\sigma
,N},
\bigl(B_t^k\bigr)_{k\in\N
}\bigr)_\#\mathbb{P}\qquad
\forall t\geq0.
\]
This implies, for every $A\in\mathcal{F}^B_t$ such that $A=\{
(B_t^k)_{k\geq1}\in\tilde A\}$ with $\tilde A\in\mathcal{B}((\R
^d)^\infty)$ and for every $\phi\in C_b((\R^d)^N)$,
\begin{eqnarray*}
\E \bigl[\mathds{1}_Af\bigl(X^N_t\bigr)
\bigr]&=&\E \bigl[\mathds{1}_{\{
(B_t^k)_{k\geq
1}\in\tilde A\}}f\bigl(X^N_t
\bigr) \bigr]=\E \bigl[\mathds{1}_{\{
(B_t^k)_{k\geq
1}\in\tilde A\}}f\bigl(X^{N,\sigma}_t
\bigr) \bigr]\\
&=&\E \bigl[\mathds {1}_Af\bigl(X^{N,\sigma}_t
\bigr) \bigr].
\end{eqnarray*}
Since the integrals of $f(X_t^N)$ and $f(X_t^{N,\sigma})$ coincide on
every element of a basis of $\mathcal{F}^B_t$, their conditional
expectation coincide also; hence, \eqref{simmetriacondizionale} follows.
\end{pf}

Using the previous result, we can now prove Theorem~\ref{teorema_2}
which we restate here for simplicity.

\begin{theorem}\label{main theorem plus}
There exists a random measure-valued solution $\mu_{t}$ of
equation~(\ref{SPDE}) such that
\[
\lim_{N\rightarrow\infty}E \bigl[ \bigl\llvert \bigl\langle
S_{t}^{N} 
,\phi \bigr\rangle- \langle
\mu_{t},\phi \rangle\bigr\rrvert \bigr] =0
\]
for all $\phi\in C_{b} ( \mathbb{R}^{d} ) $.

Moreover, given $r\in\mathbb{N}$ and $\phi_{1},\ldots,\phi_{r}\in
C_{b} (
\mathbb{R}^{d} ) $, we have
\[
\lim_{N\rightarrow\infty}E \bigl[ \phi_{1} \bigl(
X_{t}^{1,N} \bigr)\cdots\phi_{r} \bigl(
X_{t}^{r,N} \bigr) |\mathcal {F}_{t}^{B}
\bigr] =%
\E \Biggl[{ \prod
_{i=1}^{r}} 
 \langle\mu_{t},
\phi_{i} \rangle \Big|\mathcal {F}_{t}^{B} \Biggr]
\]
in $L^{1} ( \Omega ) $.
\end{theorem}

\begin{pf}
Since the convergence in the Wasserstein metric $W_1$ implies the weak
convergence, the first statement follows from Theorem~\ref{lemma_convergenza}.

Without loss of generality, we prove the second statement in the case
$r=2$. Let $\phi_1,\phi_2\leq M$. By a triangular inequality, we obtain
%
\begin{eqnarray}
\label{eq49}&&\bigl |\E \bigl[\phi_1\bigl(X_t^{1,N}
\bigr)\phi_2\bigl(X_t^{2,N}\bigr) |\mathcal
{F}_{t}^B \bigr]-\E \bigl[\langle\mu_t,
\phi_1\rangle \langle\mu _t,\phi _2\rangle |
\mathcal{F}_t^B \bigr] \bigr|
\nonumber
\\
&&\qquad\leq\bigl\llvert \E \bigl[\phi_1\bigl(X_t^{1,N}
\bigr)\phi_2\bigl(X_t^{2,N}\bigr) |\mathcal
{F}_{t}^B \bigr]-\E \bigl[\bigl\langle
S_t^N,\phi_1\bigr\rangle \bigl\langle
S_t^N,\phi _2\bigr\rangle |
\mathcal{F}_t^B \bigr]\bigr\rrvert \label{eq48}
\\
&&\qquad\quad{}+\bigl\llvert \E \bigl[\bigl\langle S_t^N,
\phi_1\bigr\rangle \bigl\langle S_t^N,\phi
_2\bigr\rangle |\mathcal{F}_t^B \bigr]-\E
\bigl[\langle\mu_t,\phi_1\rangle \langle
\mu_t,\phi_2\rangle |\mathcal{F}_t^B
\bigr]\bigr\rrvert .
\end{eqnarray}
Using Lemma~\ref{simmetria}, we can estimate \eqref{eq48} as follows:
\begin{eqnarray*}
&&\bigl\llvert \E \bigl[\phi_1\bigl(X_t^{1,N}
\bigr)\phi_2\bigl(X_t^{2,N}\bigr) |\mathcal
{F}_{t}^B \bigr]-\E \bigl[\bigl\langle
S_t^N,\phi_1\bigr\rangle \bigl\langle
S_t^N,\phi _2\bigr\rangle |
\mathcal{F}_t^B \bigr]\bigr\rrvert
\\
&&\qquad= \Biggl\llvert \frac{1}{N^2-N}\sum_{i,j=1,i\neq j}^N
\E \bigl[\phi _1\bigl(X_t^{i,N}\bigr)
\phi_2\bigl(X_t^{j,N}\bigr) |
\mathcal{F}_{t}^B \bigr]\\
&&\qquad\quad{}-\frac
{1}{N^2}\sum
_{i,j=1}^N\E \bigl[\phi_1
\bigl(X_t^{i,N}\bigr)\phi _2\bigl(X_t^{j,N}
\bigr) |\mathcal{F}_{t}^B \bigr]\Biggr\rrvert
\\
&&\qquad\leq\biggl\llvert \biggl(\frac{1}{N^2-N}-\frac{1}{N^2} \biggr)
\bigl(N^2-N\bigr)M^2\biggr\rrvert +\biggl\llvert
\frac{1}NM^2\biggr\rrvert\\
&&\qquad =2\frac{M^2}{N}\to0\qquad \mbox{as }
N\to \infty.
\end{eqnarray*}
The convergence to zero of \eqref{eq49} follows from the first
statement of this theorem. Indeed,
\begin{eqnarray*}
&&\E \bigl[\bigl\llvert \E \bigl[\bigl\langle S_t^N,
\phi_1\bigr\rangle \bigl\langle S_t^N,\phi
_2\bigr\rangle |\mathcal{F}_t^B \bigr]-\E
\bigl[\langle\mu_t,\phi _1\rangle \langle
\mu_t,\phi_2\rangle |\mathcal{F}_t^B
\bigr]\bigr\rrvert \bigr]
\\
&&\qquad\leq\E \bigl[\bigl\llvert \bigl\langle S_t^N,
\phi_1\bigr\rangle-\langle\mu _t,\phi _1
\rangle\bigr\rrvert \bigl\llvert \bigl\langle S_t^N,
\phi_2\bigr\rangle\bigr\rrvert \bigr]+\E \bigl[\bigl\llvert \bigl
\langle S_t^N,\phi_2\bigr\rangle-\langle
\mu_t,\phi_2\rangle \bigr\rrvert \bigl\llvert \langle
\mu_t,\phi_1\rangle\bigr\rrvert \bigr]
\\
&&\qquad\leq M\E \bigl[\bigl\llvert \bigl\langle S_t^N,
\phi_1\bigr\rangle-\langle\mu _t,\phi _1
\rangle\bigr\rrvert \bigr]+M\E \bigl[\bigl\llvert \bigl\langle
S_t^N,\phi _2\bigr\rangle -\langle
\mu_t,\phi_2\rangle\bigr\rrvert \bigr]
\\
&&\qquad= 2M\E \bigl[\bigl\llvert \bigl\langle S_t^N,
\phi_1\bigr\rangle-\langle\mu_t,\phi _1\rangle
\bigr\rrvert \bigr]\to0 \qquad\mbox{as } N\to\infty.
\end{eqnarray*}
\upqed\end{pf}

\begin{pf*}{Proof of Theorem~\ref{ConvergenzaL1}}
First, notice that $X^{r,N}$ is the strong solution of equation \eqref
{strong} with drift coefficient $b_\nu$, where $\nu=S^N=\{S_t^N\}
_{t\in
[0,T]}$, and initial condition $X_0^r$. We can thus write
$X^{r,N}=X^\nu
(t,X_0^r(\omega),\omega)$.

If we apply Lemma~\ref{drift diverso}, we obtain
\[
E \bigl[ \bigl\llvert X_{t}^{r,N}-X_{t}\bigr
\rrvert \bigr] \leq\gamma_Td_{\mathcal{S}}\bigl(\mu,S^N
\bigr).
\]
This last quantity goes to $0$ as $N\to\infty$ thanks to Theorem~\ref
{lemma_convergenza}.
\end{pf*}

\subsection{Quantitative estimates}\label{quantitative estimates}
As already mentioned, there are several recent results in literature
that deal with the rate of convergence of an empirical measure. In this
section, we want to give some examples of how these results can be
applied in our model using Theorem~\ref{lemma_convergenza}.
Under the assumption in the beginning of the section, we further define
$G_0^N$ the law of the initial condition $(X_0^1,\ldots,X_0^N)$ and we
denote by $G_{0,2}^N$ its first two marginals. Given a $p>0$, we
suppose that $G_0^N$ and $\mu_0$ have finite first $p$ moments
$M_p(G_0^N)$ and $M_p(\mu_0)$.

Using Theorem~2.4 of \cite{Hauray_Mischler_2014} on the initial
conditions and our estimates of Theorem~\ref{lemma_convergenza}, we can
compare the rate of convergence of the empirical measure of the
solution to the rate of convergence of just two initial particles.

\begin{corollary}
For every exponent $\gamma<(d+1+\frac{d}p)^{-1}$, there exists a finite
positive constant $\Gamma$ depending only on $p$ and $d$ such that, for
every $N\geq1$,
\[
\E\bigl[W_1\bigl(S_t^N,\mu\bigr)\bigr]\leq
\tilde C \Gamma \bigl(M_p\bigl(G_0^N
\bigr)+M_p(\mu _0) \bigr)^{{1}/p}
\biggl(W_1\bigl(G_{0,2}^N,\mu_0
\bigr)+\frac{1}N \biggr)^\gamma.
\]
\end{corollary}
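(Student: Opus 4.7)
The plan is to chain together the two estimates mentioned in the statement: Theorem~\ref{lemma_convergenza} of this paper and Theorem~2.4 of \cite{Hauray_Mischler_2014}. The key observation is that $S_t^N$, as was shown in Section~\ref{Well_SPDE}, is itself a measure-valued solution of equation \eqref{SPDE Ito form} in the sense of Definition~\ref{def1}, with initial condition $S_0^N$. Likewise $\mu_t$ is the solution (produced by the contraction argument) with initial condition $\mu_0$. Both of these initial conditions satisfy Hypothesis~\ref{Initial Condition}: measurability with respect to $\mathcal{F}_0$ holds by assumption, while the first moments are finite because we are assuming finite $p$-th moments with $p\geq 1$.

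First, I would apply Theorem~\ref{lemma_convergenza} with $\nu_0=S_0^N$ and $\nu=S^N$, obtaining
\begin{equation*}
\E[W_1(\mu_t,S_t^N)] \;\leq\; d_{\mathcal{S}}(\mu,S^N) \;\leq\; \tilde C_T\,\E[W_1(\mu_0,S_0^N)].
\end{equation*}
This reduces the problem entirely to a quantitative estimate on the initial empirical measure, which no longer involves the dynamics.

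Second, I would invoke Theorem~2.4 of \cite{Hauray_Mischler_2014} applied to the symmetric initial data $(X_0^1,\ldots,X_0^N)$ with common law approximating $\mu_0$. The hypotheses are met: the $X_0^i$ are exchangeable (equivalently, $G_0^N$ is symmetric) and the $p$-th moments $M_p(G_0^N)$ and $M_p(\mu_0)$ are finite. That theorem yields, for every $\gamma<(d+1+d/p)^{-1}$, the bound
\begin{equation*}
\E[W_1(S_0^N,\mu_0)] \;\leq\; \Gamma\bigl(M_p(G_0^N)+M_p(\mu_0)\bigr)^{1/p}\bigl(W_1(G_{0,2}^N,\mu_0)+\tfrac1N\bigr)^{\gamma},
\end{equation*}
where $\Gamma$ depends only on $p$ and $d$. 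Substituting this into the previous display gives the claim with the combined constant $\tilde C_T\,\Gamma$.

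There is essentially no obstacle beyond bookkeeping: the two inputs are stated precisely for the objects we have, and the argument amounts to composing them. The only point to be careful about is verifying that the convention of Theorem~2.4 of \cite{Hauray_Mischler_2014} matches our setting (in particular that $G_{0,2}^N$ denotes the two-dimensional marginal and that the tensorized measure $\mu_0\otimes\mu_0$ plays the role expected in their statement), which is a routine check rather than a real difficulty.
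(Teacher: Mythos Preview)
Your proposal is correct and matches the paper's own justification, which consists of a single sentence: ``Using Theorem 2.4 of \cite{Hauray_Mischler_2014} on the initial conditions and our estimates of Theorem \ref{lemma_convergenza}\ldots''. The paper gives no further detail, so your two-step chaining (first Theorem~\ref{lemma_convergenza} to reduce to $\E[W_1(\mu_0,S_0^N)]$, then Hauray--Mischler to bound that quantity) is exactly what is intended; your remark about the two-marginal convention for $G_{0,2}^N$ is also well-placed, since the statement as written compares measures on different spaces and should be read with $\mu_0^{\otimes 2}$ in place of $\mu_0$.
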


When the initial condition consists of a sequence of i.i.d. $\mu
_0$-distributed random variables $(X_0^i)_{i\in\N}$, a quantitative
estimate can be derived from \cite{Fou_Gui}. Under this stronger
assumptions one can obtain a slightly stronger result, however in this
case we must suppose that the measures which we are working on have
finite $p$ moments with $p$ strictly greater than one.

\begin{corollary}
Let $p>1$. There exists a constant $\Gamma$ depending on $p$ and $d$
such that, for all $N\geq1$,
\begin{eqnarray*}
&&\E \bigl[ W_1\bigl(S^N_t,\mu_t
\bigr) \bigr]\\
&&\qquad\leq\tilde C \Gamma M_p(\mu _0)^{{1}/p}
\\
&&\qquad\quad{}\times \cases{ %
 N^{-{1}/2}\operatorname{log}(1+N)+N^{-{(p-1)}/{p}},&\quad
$\mbox{if } d=2 \mbox{ and } p\neq2,$
\vspace*{2pt}\cr
N^{-{1}/d}+N^{-{(p-1)}/{p}},&\quad $\mbox{if } d>2 \mbox{ and } p\neq
\displaystyle\frac
{d}{(d-1)}.$ }
\end{eqnarray*}
\end{corollary}

\begin{appendix}\label{app}
\section*{Appendix}
%
\begin{proposition}\label{CBDG}
Given $(\Omega, \mathcal{F}, (\mathcal{F}_t)_{t\in[0,T]},\mathbb{P})$,
let $M_t$ be a continuous martingale with respect to $\mathcal{F}_t$.
If we define $M^*_t=\sup_{0\leq s\leq t} |M_s|$, it holds
\[
\E \bigl[\bigl|M^*_t\bigr|^p|\mathcal{F}_0 \bigr]\leq
C_p \E \bigl[[M]_t^{{p}/2}|\mathcal{F}_0
\bigr],
\]
for some constant $C_p>0$.
\end{proposition}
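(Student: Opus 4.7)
The plan is to derive the conditional inequality from the classical (unconditional) Burkholder--Davis--Gundy inequality by a standard ``localization on $\mathcal{F}_0$'' trick. The unconditional BDG asserts that for every continuous $\mathcal{F}_t$-martingale $N$ starting at $0$,
\[
\E\!\left[(N^*_t)^p\right]\leq C_p\,\E\!\left[[N]_t^{p/2}\right],
\]
with a universal constant $C_p$ independent of the martingale. I would start by recalling this standard fact and then argue that multiplication by an $\mathcal{F}_0$-measurable indicator preserves both the martingale property and the key path quantities.

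The main step is this: fix an arbitrary $A\in\mathcal{F}_0$ and consider the process $N_t:=\mathds{1}_A (M_t-M_0)$. Since $\mathds{1}_A$ is $\mathcal{F}_s$-measurable for every $s\geq 0$, one checks that $N_t$ is again a continuous $\mathcal{F}_t$-martingale. Moreover, pathwise one has
\[
N^*_t=\mathds{1}_A\,(M-M_0)^*_t,\qquad [N]_t=\mathds{1}_A\,[M]_t,
\]
because the indicator is constant in time on each fixed $\omega$. Applying the unconditional BDG to $N$ yields
\[
\E\!\left[\mathds{1}_A\,((M-M_0)^*_t)^p\right]\leq C_p\,\E\!\left[\mathds{1}_A\,[M]_t^{p/2}\right].
\]
Since $A\in\mathcal{F}_0$ was arbitrary, the definition of conditional expectation gives
\[
\E\!\left[((M-M_0)^*_t)^p\,\big|\,\mathcal{F}_0\right]\leq C_p\,\E\!\left[[M]_t^{p/2}\,\big|\,\mathcal{F}_0\right]\quad\text{a.s.}
\]
Finally I would absorb the $M_0$ term: $M^*_t\leq (M-M_0)^*_t+|M_0|$, and since $M_0$ is $\mathcal{F}_0$-measurable the inequality for $(M-M_0)^*_t$ upgrades to one for $M^*_t$ at the cost of changing the constant $C_p$ (and, if one wants, by noting that in the form stated the proposition is typically applied to stochastic integrals starting at $0$, in which case $M_0=0$ and this step is vacuous).

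The only subtle point is the justification that $[N]_t=\mathds{1}_A[M]_t$ and that $N$ is a genuine continuous martingale, but both follow from the $\mathcal{F}_0$-measurability of $\mathds{1}_A$ together with the usual characterization of $[M]$ as the a.s. limit of sums of squared increments. There is no serious obstacle; the proof is essentially the observation that any inequality of the form $\E[X]\leq C\,\E[Y]$ valid for all nonnegative $X,Y$ in a class stable under multiplication by $\mathcal{F}_0$-indicators self-promotes to the conditional inequality $\E[X\mid\mathcal{F}_0]\leq C\,\E[Y\mid\mathcal{F}_0]$.
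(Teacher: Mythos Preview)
Your proof is correct and follows essentially the same route as the paper: fix $A\in\mathcal{F}_0$, multiply the martingale by $\mathds{1}_A$, apply the unconditional BDG inequality, and use that $\mathds{1}_A$ commutes with both the running supremum and the quadratic variation to recover the conditional inequality. The only (minor) difference is that you subtract $M_0$ before localizing so as to apply BDG to a martingale starting at zero and then reabsorb $|M_0|$ at the end, whereas the paper applies BDG directly to $\mathds{1}_A M_t$; your extra care is harmless and the two arguments are otherwise identical.
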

\begin{pf}
We fix an $A\in\mathcal{F}_0$ and we prove the following:
\[
\E \bigl[\mathds{1}_A \bigl|M^*_t\bigr|^p \bigr]\leq
C_p \E \bigl[\mathds{1}_A [M]_t^{{p}/2}
\bigr].
\]

First, we note that $N_t:=M_t\mathds{1}_A$ is a continuous $\mathcal
{F}_t$-martingale, indeed $A\in\mathcal{F}_0\subset\mathcal{F}_s$ implies
\[
\E [\mathds{1}_A M_t|\mathcal{F}_s ]=
\mathds{1}_A\E [M_t|\mathcal{F}_s ]=
\mathds{1}_AM_s.
\]
We can thus apply the Burkholder--Davis--Gundy inequality to $N_t$ and
we obtain
\[
\E \bigl[\bigl|N^*_t\bigr|^p \bigr]\leq C_p \E
\bigl[[N]_t^{{p}/2} \bigr].
\]
Notice that $\mathds{1}_A$ commute with $\sup_{t\in[0,T]}$. The thesis
follows from the equality
%
\begin{equation}
\label{varquad} [\mathds{1}_A M]_t=\mathds{1}_A
[M]_t.
\end{equation}
\upqed\end{pf}
Throughout the paper, we repeatedly used an identity of the form
%
\begin{equation}
E \biggl[ \int_{\mathbb{R}^{d}}f ( x ) \,d\mu_{0} ( x ) \Big|
\mathcal{F}_{0} \biggr] =\int_{\mathbb{R}^{d}}E \bigl[ f ( x
) |\mathcal{F}_{0} \bigr] \,d\mu_{0} ( x
).\label{identity}%
\end{equation}
This identity may look at first sight completely general but it requires
appropriate assumptions of continuity in $x$ and integrability. Just in order
that all objects are well defined, we need:
\begin{longlist}[(iii)]
\item[(i)] $f:\Omega\rightarrow C ( \mathbb{R}^{d} ) $ measurable,

\item[(ii)] $E [ \int_{\mathbb{R}^{d}}\llvert  f ( x )
\rrvert
\,d\mu_{0} ( x )  ] <\infty$,

\item[(iii)] $E [ \sup_{x\in K}\llvert  f ( x ) \rrvert
 ]
<\infty$ for every compact set $K\subset\mathbb{R}^{d}$.
\end{longlist}

Indeed, under (i)--(ii), the integral $\int_{\mathbb{R}^{d}}f (
x )
\,d\mu_{0} ( x ) $ is first well defined and finite a.s. ($f$
has to
be continuous in $x$ since $\mu_{0}$ is a general probability
measure), and
also $L^{1} ( \Omega ) $, so the conditional expectation
$E [
\int_{\mathbb{R}^{d}}f ( x ) \,d\mu_{0} ( x )
|\mathcal{F}_{0} ] $ is well defined. As to the right-hand side of
(\ref{identity}), on any compact set $K\subset\mathbb{R}^{d}$, from
(i) and
(iii), we have $\omega\mapsto f ( \omega,\cdot ) $ of class
$L^{1} ( \Omega;C ( K )  ) $ [the space $C (
K ) $ of continuous functions on $K$ endowed with the uniform topology],
hence by the definition of conditional expectation of random variables with
values in Banach spaces, $E [ f|_{K}|\mathcal{F}_{0} ] $ is
again a
well-defined element of $L^{1} ( \Omega;C ( K )  )
$; and, as shown below in the proof of next proposition, taking as compact
sets the sequence of closed balls $B ( 0,n ) $ one gets a
definition of $E [ f ( x ) |\mathcal{F}_{0} ] $
as a
measurable function from $\Omega$ to $C ( \mathbb{R}^{d} ) $;
notice in particular that continuity in $x$ of $E [ f (
x )
|\mathcal{F}_{0} ] $ is essential to define $\int_{\mathbb
{R}^{d}%
}E [ f ( x ) |\mathcal{F}_{0} ] \,d\mu_{0} (
x ) $ because $\mu_{0}$ is a general probability measure.
Finally, the
finiteness of $\int_{\mathbb{R}^{d}}E [ f ( x )
|\mathcal
{F}%
_{0} ] \,d\mu_{0} ( x ) $ is ultimately a consequence of
(ii) again, as proved in the next proposition.

\begin{proposition}\label{ScambioCondizionale}
Under assumptions \textup{(i)}, \textup{(ii)} and \textup{(iii)}, identity (\ref{identity}) holds
true almost surely.
\end{proposition}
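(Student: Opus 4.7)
My plan is to first fix the meaning of the right-hand side of \eqref{identity} as a genuine random continuous function on $\mathbb{R}^d$, then verify the identity on a dense class of elementary $f$, and finally approximate.

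\emph{Step 1: continuous-in-$x$ version of $E[f(x)|\mathcal{F}_0]$.} For each $n\in\mathbb{N}$ and $K_n=\overline{B(0,n)}$, assumptions (i) and (iii) give $f|_{K_n}\in L^1(\Omega;C(K_n))$. The Banach-space-valued conditional expectation yields $g_n:=E[f|_{K_n}\,|\,\mathcal{F}_0]\in L^1(\Omega;C(K_n))$, which is a modification of the pointwise conditional expectations $E[f(x)|\mathcal{F}_0]$ for $x\in K_n$. Since restriction is a bounded linear map $C(K_{n+1})\to C(K_n)$ and commutes with conditional expectation, $g_{n+1}|_{K_n}=g_n$ almost surely. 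Off a single $P$-null set we can therefore paste the $g_n$ into a single element $g:\Omega\to C(\mathbb{R}^d)$; this $g(\cdot,x)$ is an $\mathcal{F}_0$-conditional expectation of $f(\cdot,x)$ for every $x$, and it is continuous in $x$ a.s., so $\int_{\mathbb{R}^d}g(\omega,x)\,d\mu_0(\omega,dx)$ is unambiguously defined under (ii).

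\emph{Step 2: elementary functions.} Consider $f$ of the form $f(\omega,x)=\sum_{i=1}^m\mathbf{1}_{A_i}(\omega)\,\phi_i(x)$ with $A_i\in\mathcal{F}$ and $\phi_i\in C_b(\mathbb{R}^d)$. Because $\mu_0$ is $\mathcal{F}_0$-measurable, $\int\phi_i\,d\mu_0$ is $\mathcal{F}_0$-measurable, and a one-line computation using linearity of conditional expectation gives
\[
E\!\left[\int f\,d\mu_0\,\Big|\,\mathcal{F}_0\right]
=\sum_{i=1}^m P(A_i|\mathcal{F}_0)\!\int\phi_i\,d\mu_0
=\int E[f(\cdot,x)|\mathcal{F}_0]\,d\mu_0(x),
\]
so \eqref{identity} holds for these $f$. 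By the usual monotone class / Dynkin argument, \eqref{identity} extends to all $f:\Omega\times\mathbb{R}^d\to\mathbb{R}$ that are jointly $\mathcal{F}\otimes\mathcal{B}(\mathbb{R}^d)$-measurable, bounded, and continuous in $x$ (joint measurability follows from (i) together with the continuity in $x$).

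\emph{Step 3: approximation in the unbounded, unbounded-$f$ case.} For general $f$ satisfying (i)--(iii), I would approximate in two stages. First, truncation: set $f_R(\omega,x):=\chi_R(x)f(\omega,x)$ where $\chi_R\in C_c(\mathbb{R}^d)$ is a cut-off equal to $1$ on $B(0,R)$. Each $f_R$ is bounded on $\Omega\times\mathbb{R}^d$ by (iii) once intersected with $K=\mathrm{supp}\,\chi_R$, so by Step 2 applied to $f_R\wedge N$ and dominated convergence in $N$, the identity holds for $f_R$ itself. Then pass $R\to\infty$: the left-hand side converges in $L^1(\Omega)$ because $E\bigl[\int|f-f_R|\,d\mu_0\bigr]\to 0$ by (ii) and dominated convergence (using $\int|f|\,d\mu_0\in L^1(\Omega)$ as envelope); for the right-hand side, fibrewise monotone/dominated convergence combined with $\int E[|f(x)||\mathcal{F}_0]\,d\mu_0(x)=E[\int|f|\,d\mu_0\,|\,\mathcal{F}_0]<\infty$ a.s.\ (the elementary-case identity applied to $|f|$, via Step 2) gives a.s.\ convergence of the RHS to $\int E[f(x)|\mathcal{F}_0]\,d\mu_0(x)$.

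\emph{Main obstacle.} The delicate point is the last step: since $\mu_0$ is random and $f$ is only controlled in the averaged senses (ii)--(iii), one cannot simply appeal to a deterministic dominating function on $\mathbb{R}^d$. The key trick is to first establish, via Step 2 applied to $|f|$, that the $\mathcal{F}_0$-conditional version of (ii) holds almost surely, which in turn provides the integrable envelope needed to justify the limit $R\to\infty$ for the right-hand side. Once this is in place, all other steps are routine linearity and density arguments.
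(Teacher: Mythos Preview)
Your overall architecture matches the paper's: construct the continuous-in-$x$ version of $E[f(x)\mid\mathcal{F}_0]$ via Banach-space conditional expectation on balls and paste, verify the identity on simple functions $\sum \mathbf{1}_{A_i}\phi_i$, then approximate. The paper does exactly this, only in a slightly different order (it restricts to a ball first, approximates there, and only then lets the ball grow).

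There is, however, a genuine gap in your Step~2. A monotone class or Dynkin argument cannot carry the identity from elementary functions to all bounded continuous-in-$x$ functions, because the class of $f$ that are continuous in $x$ is \emph{not} closed under bounded monotone limits. And you cannot enlarge the target class to all bounded jointly measurable $f$ either: for $f$ not continuous in $x$ the right-hand side $\int E[f(x)\mid\mathcal{F}_0]\,d\mu_0(x)$ is ill-defined, since $E[f(x)\mid\mathcal{F}_0]$ is determined only up to a $P$-null set for each fixed $x$, and the random measure $\mu_0$ may well charge an uncountable union of such null sets. The paper circumvents this precisely by avoiding monotone class and using instead that simple functions $\sum f_i\mathbf{1}_{A_i}$, $f_i\in C(K)$, are dense in $L^1(\Omega;C(K))$; this yields, for each compact $K$, a sequence $f^{(m)}\to f|_K$ in $L^1(\Omega;C(K))$ along which both sides of the identity converge (both are controlled by $\|f^{(m)}-f\|_{L^1(\Omega;C(K))}$). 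That $L^1$-density is the correct replacement for your monotone class step, and it preserves continuity in $x$ throughout.

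Two secondary imprecisions in Step~3. First, the claim that ``$f_R$ is bounded on $\Omega\times\mathbb{R}^d$ by (iii)'' is false: assumption (iii) gives $\sup_{x\in K}|f(x)|\in L^1(\Omega)$, not a uniform bound in $\omega$. Since you immediately pass to $f_R\wedge N$ this is harmless, but the sentence should be corrected. Second, the envelope $\int E[|f(x)|\mid\mathcal{F}_0]\,d\mu_0=E\bigl[\int|f|\,d\mu_0\,\big|\,\mathcal{F}_0\bigr]$ you invoke at the end is itself an instance of the identity you are proving, so you must obtain it by monotone convergence from the bounded case $|f|\wedge N$ rather than ``via Step~2'' directly; the paper handles this by splitting $g=E[f\mid\mathcal{F}_0]$ into $g^+,g^-$ and using monotone convergence on each piece as the ball radius $n\to\infty$.
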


\begin{pf}
As already noticed, given $n\in\mathbb{N}$, $E [ f|_{B (
0,n )
}|\mathcal{F}_{0} ] $ is a well-defined element of $L^{1} (
\Omega;C ( B ( 0,n )  )  ) $. Moreover, if
$g$ is
in the equivalence class of $E [ f|_{B ( 0,n ) }%
|\mathcal{F}_{0} ] $, then at any $x\in B ( 0,n ) $
we have
that $g ( x ) $ is in the equivalence class of $E [
f (
x ) |\mathcal{F}_{0} ] $ [understood as the conditional
expectation of the r.v. $\omega\mapsto f ( \omega,x ) $, $x$
given]. Indeed, for every $A\in\mathcal{F}_0$,
\[
\E\bigl[g(x)\mathds{1}_A\bigr]=\E[g\mathds{1}_A](x)=
\E[f\mathds{1}_A](x)=\E \bigl[f(x)\mathds{1}_A\bigr].
\]

We can choose a sequence $f^{(m)}=\sum_{i=1}^m f_i\mathds{1}_{A_i}$
such that $f_i\in C(B ( 0,n ))$, $A_i\in\mathcal{F}$ and
$f^{(m)}\to f$ in $L^1(\Omega, C(B ( 0,n )))$, as $m\to
\infty
$. Moreover one can choose, up to subsequences, $f^{(m)}$ such that the
convergence is almost sure and $\|f^{(m)}\|_\infty\leq\|f|_{B (
0,n )}\|_\infty$, a.s. It is easy to see that $\E
[f^{(m)}|\mathcal
{F}_0]=\break \sum_i\E[f_i|\mathcal{F}_0]\mathds{1}_{A_i}$. From this it
follows that
\[
\E \biggl[\int_{B ( 0,n )}f^{(m)} \,d\mu_0 |
\mathcal {F}_0 \biggr]=\int_{B ( 0,n )}\E
\bigl[f^{(m)}\Big|\mathcal{F}_0 \bigr] \,d\mu _0,\qquad
\mathbb{P}\mbox{-a.s.}
\]
Notice that, for every fixed $\omega$, it holds $f^{(m)}(\omega)\to
f(\omega)$ uniformly in $x$ on the compact $B ( 0,n )$, and
hence, by the dominated convergence theorem
\[
\int_{B ( 0,n
)}f^{(m)}(\omega)(x)\mu_0(\omega, d x)\to\int_{B ( 0,n
)}f(\omega) (x)\mu_0(\omega, d x).
\]
Thus, $\int_{B ( 0,n
)}{f^{(n)}} \,d\mu_0\to \int_{B ( 0,n )}f \,d\mu_0$ in $L^1$ from
which follows that, up to a subsequence, $\E [\int_{K}f^{(n)}
\,d\mu
_0 | \mathcal{F}_0 ]\to\E [\int_{B ( 0,n
)}f \,d\mu
_0 |\mathcal{F}_0 ]$, $\mathbb{P}$-a.s. On the other hand, we
can first apply conditional dominated convergence and then the
traditional version of it to obtain $\int_{B ( 0,n )}\E
[f^{(n)}|\mathcal{F}_0 ] \,d\mu_0\to\int_{B ( 0,n
)}\E
[f|\mathcal{F}_0 ] \,d\mu_0$.



We have proven \eqref{identity} on a closed ball of $\R^d$, we want to
extend it on the whole space. Given $n\in\N$, we call $f_n$ the
restriction of $f$ on $B(0,n)$. It holds, as already noted, $f_n\in
L^1(\Omega,C(B(0,n)))$ for every $n\in\mathbb{N}$.

We construct now the sequence $\{g_n\}_{n\in\N}$ such that
$g_n:\Omega
\rightarrow C ( B(0,n) )$ and
\begin{eqnarray*}
g_{n} &\in&L^{1} \bigl( \Omega;C \bigl( B ( 0,n ) \bigr)
\bigr) \qquad\mbox{for every }n\in\mathbb{N},
\\
g_{n} &\in&E [ f_{n}|\mathcal{F}_{0} ]\qquad
\mbox{for every }%
n\in\mathbb{N}.
\end{eqnarray*}
We will show that there exists a function $g:\Omega\to C(\R^d)$, such
that for every $x\in\R^d$, $g(x)\in E [
f(x)|\mathcal{F}_{0} ] $ and $g|_{\Omega\times B
(0,n )
} =g_n$. Moreover, if $g,g^{\prime}:\Omega\rightarrow C (
\mathbb
{R}^{d} ) $ have
the same properties, then $g=g^{\prime}$ a.s.

First, let us prove that $g_{n+1}|_{\Omega\times B ( 0,n )
}$, as a function
from $\Omega$ to $C ( B ( 0,n )  ) $, is equal
to $g_{n}$
on a set $\Omega_{n}$ of measure one. The function $g_{n+1}$ is
characterized by two properties: it is $\mathcal{F}_{0}$-measurable,
and $E%
 [ g_{n+1}\mathds{1}_{A} ] =E [ f_{n+1}\mathds
{1}_{A}
] $ for every $A\in
\mathcal{F}_{0}$. Here, $E [ g_{n+1}\mathds{1}_{A} ] $ and
$E [
f_{n+1}\mathds{1}_{A} ] $ are elements of $C ( B (
0,n+1 )  ) $%
. Similarly, $g_{n}$ is $\mathcal{F}_{0}$-measurable, and $E [
g_{n}\mathds{1}_{A}%
 ] =E [ f_{n}\mathds{1}_{A} ] $ for every $A\in
\mathcal{F}_{0}$.
Obviously, $g_{n+1}|_{\Omega\times B ( 0,n ) }$ is
$\mathcal
{F}_{0}$%
-measurable. Moreover,
\[
E [ g_{n+1}|_{\Omega\times B ( 0,n ) }\mathds {1}_{A} ] =E [
g_{n+1}\mathds{1}_{A} ] |_{B ( 0,n ) }.
\]
To show this, notice that the function
\[
G_n(x):=\E\bigl[g_{n+1}(x)|_{\Omega\times B(0,n)}
\mathds{1}_A\bigr]
\]
is well defined by Fubini theorem as a function from $B(0,n)$ to $\R
^d$. In the same way, one can define $G(x):=\E[g_{n+1}(x)\mathds{1}_A]$
as a function on $B(0,n+1)$. Now $G_n(x)=G(x)$ for every $x\in B(0,n)$,
hence $G_n=G|_{B(0,n)}$. Now,
\begin{eqnarray*}
E [ g_{n+1}\mathds{1}_{A} ] |_{B ( 0,n ) }&=&E [
f_{n+1}\mathds{1}_{A} ] |_{B ( 0,n ) }=E [
f_{n+1}|_{\Omega\times B ( 0,n ) }\mathds{1}_{A} ] \\
&=&E [
f_{n}\mathds{1}_{A}%
 ] =E [ g_{n}
\mathds{1}_{A} ]
\end{eqnarray*}
and thus $g_{n+1}|_{\Omega\times B ( 0,n ) }$ is almost
surely equal to $g_{n}$.

On the set $\bigcap_{n}\Omega_{n}$, we have $g_{m}|_{\Omega\times
B (
0,k ) }=g_{k}$ for every $m\geq k\geq0$. Let $g:\Omega\times
\mathbb{R%
}^{d}\rightarrow\mathbb{R}$ be defined on $\bigcap_{n}\Omega_{n}$ as $%
g ( x,\omega ) =g_{m} ( x,\omega ) $ where $m$
is the
smallest integer such that $x\in B ( 0,m ) $ (and
arbitrarily on
the complementary of $\bigcap_{n}\Omega_{n}$). For every $\omega\in
\bigcap_{n}\Omega_{n,}$ the function $x\mapsto g ( x,\omega ) $ is
continuous on each $B ( 0,m ) $ (easy to check by the previous
properties). Hence, $g:\Omega\rightarrow C ( \mathbb
{R}^{d} ) $.

Now, if $g^{\prime}:\Omega\rightarrow C(\R^d)$ is such that, for every
$n\in\N$, it holds $g^{\prime}|_{\Omega\times B (0,n )
}\in\E[f_n|
\mathcal{F}_0]$, then there exists a set $\Omega_n\subset\Omega$,
such that
$\mathbb{P}(\Omega_n)=1$ and $g_n=g_n^{\prime}$ on $\Omega_n$. Then
for every
$\omega\in\bigcap_n\Omega_n$, and for every $x\in B(0,n)$, $g(\omega
,x)=g_n(\omega,x)=
g_n^\prime(\omega,x)=g^\prime(\omega,x)$; hence, $g=g^\prime$ a.e.
Finally, if $x\in B(0,n)$, and $A\in\mathcal{F}_0$,
\begin{eqnarray*}
\E\bigl[g(x)\mathds{1}_A\bigr]&=&\E\bigl[g_n(x)
\mathds{1}_A\bigr]=\E[g_n\mathds {1}_A](x)=
\E [f_n\mathds{1}_A](x)=\E\bigl[f_n(x)
\mathds{1}_A\bigr]\\
&=&\E\bigl[f(x)\mathds{1}_A\bigr].
\end{eqnarray*}
Hence, $g(x)\in\E[f(x)|\mathcal{F}_0]$.
To conclude, we notice that applying Lebesgue dominate convergence
theorem to the sequence $f_n$, the random variables $\int_{B(0,n)}f_n
\,d\mu_0$ converges a.s. to the random variable $\int_{\R^d}f \,d\mu
_0$, as
$n\to\infty$. Thus, by the conditional version of dominated
convergence theorem,
%
\begin{equation}
\label{spercond1} \E \biggl[\int_{R^d}f \,d\mu_0 \Big|
\mathcal{F}_0 \biggr]=\lim_{n\to
\infty}\E \biggl[\int
_{B(0,n)}f_n \,d\mu_0 \Big|
\mathcal{F}_0 \biggr].
\end{equation}
By the definition of $g$, we have that, as $n\to\infty$, the positive
part $g_n^+$ increases to $g^+$ a.s., and the negative $g_n^-$
increases to $g^-$. Thus, by monotone convergence theorem, it holds a.s.
%
\begin{eqnarray}
\label{spercond2} \int_{\R^d}g \,d\mu_0&=&\int
_{\R^d}g^+ \,d\mu_0-\int_{\R^d}g^-
\,d\mu _0
\nonumber
\\[-8pt]
\\[-8pt]
\nonumber
&=&\lim_{n\to\infty}\int_{B(0,n)}g_n^+
\,d\mu_0-\lim_{n\to\infty}\int_{B(0,n)}g_n^-
\,d\mu_0.
\end{eqnarray}
The thesis follows from the equalities \eqref{spercond1} and \eqref
{spercond2}. Notice that this also implies that $\int_{\R^d}\E
[f(x)|\mathcal{F}_0]\,d\mu_0(x)$ is finite, because it is equal to a
finite quantity.
\end{pf}

\begin{proposition}\label{measurability}
Let $(\mu,\nu) \in\mathcal{P}_1(\R^d)$. If we define the set
\begin{eqnarray*}
&&\Gamma_0(\mu,\nu)\\
&&\qquad:= \biggl\{\bar m\in\Gamma(\mu,\nu) \Big|\int
_{\R
^{2d}}|x-y| \,d \bar m(x,y)=\inf_{m\in\Gamma(\mu,\nu)}\int
_{\R^{2d}}|x-y| \,d m(x,y) \biggr\}
\end{eqnarray*}
then there exists a measurable function $f:\mathcal{P}_1(\R^d)\times
\mathcal{P}_1(\R^d)\to\mathcal{P}_1(\R^{2d})$ such that $f(\mu
,\nu)\in
\Gamma_0(\mu,\nu)$.
\end{proposition}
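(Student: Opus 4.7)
The plan is to exhibit $f$ as a Borel measurable selection of the set-valued map
\[
\Gamma_0 : \mathcal{P}_1(\R^d)\times\mathcal{P}_1(\R^d) \rightrightarrows \mathcal{P}_1(\R^{2d}).
\]
Since the target $(\mathcal{P}_1(\R^{2d}),W_1)$ is Polish (as recalled in Section~\ref{Settings}), it is enough to verify that $\Gamma_0$ has nonempty closed values and closed graph; the Kuratowski--Ryll-Nardzewski measurable selection theorem will then furnish the required $f$.

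First I would verify nonemptiness and closedness of each $\Gamma_0(\mu,\nu)$. The set of couplings $\Gamma(\mu,\nu)$ is nonempty, since it contains the product measure $\mu\otimes\nu$, and tight by Prokhorov (the marginals being fixed), hence relatively compact in the weak topology; continuity of the marginal projections shows that it is also weakly closed, so $\Gamma(\mu,\nu)$ is weakly compact. The cost functional $c(m):=\int_{\R^{2d}}|x-y|\,dm(x,y)$ is lower semicontinuous on $\mathcal{P}(\R^{2d})$ for the weak topology, and $c(\mu\otimes\nu)\leq M_1(\mu)+M_1(\nu)<\infty$, so $c$ attains its minimum on $\Gamma(\mu,\nu)$. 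Consequently $\Gamma_0(\mu,\nu)$ is a nonempty compact subset of $\mathcal{P}_1(\R^{2d})$, and in particular closed.

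Next I would show that the graph
\[
G:=\{(\mu,\nu,m) : m\in \Gamma_0(\mu,\nu)\}
\]
is closed in $\mathcal{P}_1(\R^d)^2\times \mathcal{P}_1(\R^{2d})$. Write $G=G_1\cap G_2$, where $G_1$ requires the marginals of $m$ to be $\mu$ and $\nu$, and $G_2$ requires $c(m)=W_1(\mu,\nu)$. Closedness of $G_1$ is immediate from the continuity of pushforward by the coordinate projections under $W_1$-convergence. For $G_2$, on $\mathcal{P}_1(\R^{2d})$ the map $m\mapsto c(m)$ is continuous in $W_1$, because $(x,y)\mapsto|x-y|$ has at most linear growth and $W_1$-convergence on $\mathcal{P}_1$ is equivalent to weak convergence together with convergence of first moments; moreover $(\mu,\nu)\mapsto W_1(\mu,\nu)$ is Lipschitz. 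Hence $G_2$ is closed, and so is $G$. Because a closed-graph, nonempty closed-valued multifunction between Polish spaces is Borel (in fact weakly) measurable, Kuratowski--Ryll-Nardzewski yields a Borel measurable $f$ with $f(\mu,\nu)\in\Gamma_0(\mu,\nu)$.

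The main obstacle is the mismatch between the weak topology, in which compactness of $\Gamma(\mu,\nu)$ and lower semicontinuity of $c$ are most natural, and the $W_1$-topology, which provides the Polish structure used by the selection theorem. The reconciliation is that on $\Gamma(\mu,\nu)$ the first moments are uniformly dominated by $M_1(\mu)+M_1(\nu)$, which promotes weak convergence to $W_1$-convergence and in particular makes $c$ continuous there; once this is in place, the selection argument is routine. A minor bookkeeping point is to check that $W_1$ is jointly continuous in $(\mu,\nu)$, but this follows from the triangle inequality $|W_1(\mu_n,\nu_n)-W_1(\mu,\nu)|\leq W_1(\mu_n,\mu)+W_1(\nu_n,\nu)$.
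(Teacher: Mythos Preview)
Your approach is essentially the same as the paper's: verify that the graph $\{(\mu,\nu,m):m\in\Gamma_0(\mu,\nu)\}$ is closed and then invoke a measurable selection theorem. The paper's proof is very terse---it cites Proposition~7.1.3 of \cite{AGS} for closedness of the graph in the weak topology and then applies the von~Neumann selection theorem---whereas you spell out the closedness argument in the $W_1$ topology, which is a welcome level of detail.

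One technical point deserves attention. You assert that ``a closed-graph, nonempty closed-valued multifunction between Polish spaces is Borel (in fact weakly) measurable'', and then invoke Kuratowski--Ryll-Nardzewski. This is not quite right as stated: for a closed-graph multifunction $F:X\rightrightarrows Y$ between Polish spaces, the set $F^{-1}(U)=\pi_X\big(G\cap(X\times U)\big)$ is the continuous projection of a Borel set, hence \emph{analytic}, but not in general Borel. So the hypothesis of KRN (Borel weak measurability) is not automatically met. The paper's choice of the von~Neumann (Jankov--von~Neumann) uniformization theorem is the natural tool here, since it applies directly to multifunctions with analytic (in particular closed) graph and yields a universally measurable selection; this is enough for the application in Theorem~\ref{lemma_convergenza}, where the selection is composed with an $\mathcal{F}_0$-measurable map on a complete probability space. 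The fix is simply to replace your appeal to KRN by von~Neumann's theorem; the rest of your argument (nonemptiness, compactness of values, closedness of $G_1$ and $G_2$) is correct and more detailed than the paper's.
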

\begin{pf}
The set $\{(\mu,\nu,m)| m\in\Gamma_0(\mu,\nu)\}$ is closed in
$\mathcal
{P}_1(\R^d)\times\mathcal{P}_1(\R^d)\times\mathcal{P}_1(\R^{2d})$
endowed with the weak topology (see, e.g., \cite{AGS}, Proposition~7.1.3), thus the proposition follows from Von Neumann theorem on
measurable selections.
\end{pf}
\end{appendix}
\section*{Acknowledgments} The authors wish to thank the anonymous
referees for
the careful revision which helped to clarify and improve considerably the
initial version of this paper.







\printaddresses
\end{document}